\newcounter{Theorem}[section]
\newtheorem{problem}[subsection]{Problem}
\newtheorem{theorem}[Theorem]{Theorem}
\newtheorem{definition}[Theorem]{Definition}
\newtheorem{corollary}[Theorem]{Corollary}
\newtheorem{lemma}[Theorem]{Lemma}
\numberwithin{Theorem}{section}
\newcommand{\bbL}{\mathbb{L}}
\newcommand{\D}{\mathcal{D}}
\newcommand{\Dm}{\mathcal{D}^{m}}
\newcommand{\mo}[1]{ \ (\!\!\!\!\mod #1 ) }
\tikzset{
  rootvertex/.style={fill=black, circle,inner sep=1pt},	 
	 whiteroot/.style={fill=white, draw,rectangle,inner sep=2pt},	 
	 invertex/.style={fill=black,circle,inner sep=1pt},
	 oldroot/.style={fill=black,rectangle,inner sep=1.4pt},	 
	 rleaf/.style={fill=red,draw=red,circle,inner sep=1.5pt},	
  wleaf/.style={fill=black,rectangle,inner sep=2pt},
	 centro/.style={fill=blue,draw,circle,inner sep=1.5pt},	 
	 pat1/.style={fill=purple!20},	 
	 pat2/.style={fill=blue!20}, 
	 pat3/.style={fill=gray!20}
	 }
\tikzset
{
tree2/.pic=
{%
\draw (0,0) -- (0.5,0.5) -- (1,0);
\node[fill=black,rectangle,inner sep=2pt]  at (0,0) {};      
\node[fill=black,rectangle,inner sep=2pt]  at (1,0) {};
\node[fill=black,circle,inner sep=1pt]  at (0.5,0.5) {};
}
}
\tikzset
{
tree3/.pic=
{%
\draw (0,0) -- (0.5,0.5) -- (1,0);
\draw (0.5,0.5) -- (1,1) -- (2,0);
\node[fill=black,rectangle,inner sep=2pt]  at (0,0) {};      
\node[fill=black,rectangle,inner sep=2pt]  at (1,0) {};      
\node[fill=black,rectangle,inner sep=2pt]  at (2,0) {}; 
\node[fill=black,circle,inner sep=1pt]  at (0.5,0.5) {};
\node[fill=black,circle,inner sep=1pt]  at (1,1) {};
}
}
\tikzset
{
tree4.1/.pic=
{%
\draw (0,0) -- (0.5,0.5) -- (1,0);
\draw (0.5,0.5) -- (1,1) -- (2,0);
\draw (1,1) -- (1.5,1.5) -- (3,0);
\node[fill=black,rectangle,inner sep=2pt]  at (0,0) {};      
\node[fill=black,rectangle,inner sep=2pt]  at (1,0) {};      
\node[fill=black,rectangle,inner sep=2pt]  at (2,0) {}; 
\node[fill=black,rectangle,inner sep=2pt]  at (3,0) {}; 
\node[fill=black,circle,inner sep=1pt]  at (0.5,0.5) {};
\node[fill=black,circle,inner sep=1pt]  at (1,1) {};
\node[fill=black,circle,inner sep=1pt]  at (1.5,1.5) {};
}
}
\tikzset
{
tree4.2/.pic=
{%
\draw (0,0) -- (0.5,0.5) -- (1,0);
\draw (0.5,0.5) -- (1.5,1.5) -- (2.5,0.5);
\draw (2,0) -- (2.5,0.5) -- (3,0);
\node[fill=black,rectangle,inner sep=2pt]  at (0,0) {};      
\node[fill=black,rectangle,inner sep=2pt]  at (1,0) {};      
\node[fill=black,rectangle,inner sep=2pt]  at (2,0) {}; 
\node[fill=black,rectangle,inner sep=2pt]  at (3,0) {}; 
\node[fill=black,circle,inner sep=1pt]  at (0.5,0.5) {};
\node[fill=black,circle,inner sep=1pt]  at (1.5,1.5) {};
\node[fill=black,circle,inner sep=1pt]  at (2.5,0.5) {};
}
}
\tikzset
{
tree5.2/.pic=
{%
\draw (0,0) -- (0.5,0.5) -- (1,0);
\draw (0.5,0.5) -- (1.5,1.5) -- (2.5,0.5);
\draw (2,0) -- (2.5,0.5) -- (3,0);
\draw (1.5,1.5) -- (2,2) -- (4,0);
\node[fill=black,rectangle,inner sep=2pt]  at (0,0) {};      
\node[fill=black,rectangle,inner sep=2pt]  at (1,0) {};      
\node[fill=black,rectangle,inner sep=2pt]  at (2,0) {}; 
\node[fill=black,rectangle,inner sep=2pt]  at (3,0) {}; 
\node[fill=black,rectangle,inner sep=2pt]  at (4,0) {}; 
\node[fill=black,circle,inner sep=1pt]  at (0.5,0.5) {};
\node[fill=black,circle,inner sep=1pt]  at (2,2) {};
\node[fill=black,circle,inner sep=1pt]  at (1.5,1.5) {};
\node[fill=black,circle,inner sep=1pt]  at (2.5,0.5) {};
}
}
\title{Decks of rooted binary trees}
\author[1]{Ann Clifton\thanks{\url{aclifton@latech.edu}}}
\author[2]{\'{E}va Czabarka\thanks{\url{czabarka@math.sc.edu}}}
\author[3]{Audace Dossou-Olory\thanks{\url{audace@aims.ac.za}}}
\author[4]{Kevin Liu\thanks{\url{kliu15@uw.edu}}}
\author[5]{Sarah Loeb\thanks{\url{sloeb@hsc.edu}}}
\author[2]{Utku Okur\thanks{\url{uokur@email.sc.edu}}}
\author[2]{L\'{a}szl\'{o} Sz\'{e}kely\thanks{\url{szekely@math.sc.edu}}}
\author[6]{Kristina Wicke\thanks{\url{kristina.wicke@njit.edu}}}
\affil[1]{Louisiana Tech University, Ruston, LA, USA}
\affil[2]{University of South Carolina, Columbia, SC, USA}
\affil[3]{University of Abomey-Calavi, Abomey-Calavi, Atlantique Department, Benin}
\affil[4]{University of Washington, Seattle, WA, USA}
\affil[5]{Hampden-Sydney College, Hampden Sydney, VA, USA}
\affil[6]{New Jersey Institute of Technology, Newark, NJ, USA}
\date{}
\begin{document}
\maketitle
\begin{abstract}
\noindent We consider extremal problems related to decks and multidecks of rooted binary trees (a.k.a. rooted phylogenetic tree shapes). Here, the deck (resp. multideck) of a tree $T$ refers to the set (resp. multiset) of leaf induced binary subtrees of $T$. 
On the one hand, we consider the reconstruction of trees from their (multi)decks. We give lower and upper bounds on the minimum (multi)deck size required to uniquely encode a rooted binary tree on $n$ leaves. 
On the other hand, we consider problems related to deck cardinalities. In particular, we characterize trees with minimum-size as well as maximum-size decks. 
Finally, we present some exhaustive computations for $k$-universal trees, i.e., rooted binary trees that contain all $k$-leaf rooted binary trees as induced subtrees. 
\end{abstract}
\textit{Keywords:} rooted binary trees, decks, graph reconstruction, universality. \\
\textit{2020 Mathematics Subject Classification:} 05C05, 05C35, 05C60.

\section{Introduction}
\noindent This paper is concerned with {\it rooted binary trees on $n$ leaves}, which correspond to {\it rooted phylogenetic tree shapes}, i.e., the unlabeled trees underlying the leaf-labeled phylogenetic trees; see \cite{SempleSteel}.
The number of  $n$-leaf rooted phylogenetic tree shapes is the Wedderburn-Etherington number $W_n$ given as sequence
A001190 in the On-Line Encyclopedia of Integer Sequences (OEIS) \cite{oeis}.
A subset of  $k$ leaves in a rooted binary tree $T$ defines a so-called {\it induced binary subtree} of a rooted binary tree,
by taking the smallest subtree of $T$ containing the $k$ leaves, designating as root in this smallest subtree
the vertex closest to the root of $T$, and suppressing non-root degree-2 vertices in the smallest subtree to obtain the induced binary subtree. The concept of induced binary subtrees is again motivated by phylogenetics, where a phylogenetic tree is created for a given set of taxa, such that the taxa correspond to the leaves of the phylogenetic tree, and the tree describes the true evolutionary history of the taxa. If a phylogenetic tree is created just for a subset of taxa, the result is exactly their induced binary tree. From a graph-theoretical viewpoint, an induced binary subtree is a special kind of topological minor of the original tree.

The {\it $k$-deck} of a rooted binary tree with $n$ leaves is the set of isomorphism classes of its $k$-leaf
induced binary subtrees, while in the {\it $k$-multideck} the isomorphism classes come with the count of how many times 
they arise. In this paper, we are concerned with extremal problems for decks and multidecks, which we now describe (note that formal definitions will be given in the next sections).

The first problem is motivated by the famous graph reconstruction conjecture that states that every graph $G$ on at least three vertices is uniquely determined (up to isomorphism) from the multiset of graphs obtained by deleting one vertex in every possible way from $G$~\cite{Kelly1942, Ulam1960}. This conjecture has been proven for many special classes of graphs but remains open in general (see, e.g.~\cite{Bondy1977, Harary1974, Ramachandran2004} for an overview). Here, we consider a restricted version of the graph reconstruction conjecture for the deck of rooted phylogenetic tree shapes. More precisely, we investigate the minimum positive integer $R(n)$ (resp. $R^{(m)}(n)$) such that every rooted binary tree on $n$ leaves is determined by its size $R(n)$-deck (resp. size $R^{(m)}(n)$-multideck). We obtain lower and upper bounds for these reconstruction numbers in Section~\ref{reconstr}. 

The second set of problems is concerned with deck cardinalities. More precisely, we analyze how small (resp. how big) the size $(n-1)$-deck of a rooted binary tree on $n$ leaves can be. We also consider the union of the size-$k$ decks for $k=1, \ldots, n$ for a rooted binary tree $T$ on $n$ leaves. Note that the cardinality of this union corresponds precisely to the number of subtrees of $T$. We characterize trees with minimum-size decks in Section~\ref{subsec: few subtrees} and trees with maximum-size decks in Section~\ref{subsec:many subtrees}. Finally, we consider the question of determining the smallest positive integer $u(k)$ such that some rooted binary tree with $u(k)$ leaves contains in its $k$-deck {\it all} rooted binary trees with $k$ leaves. 
In Section~\ref{subsec:universal}, we recall asymptotic upper and lower bounds for $u(k)$ from the literature. Additionally, we explicitly compute $u(k)$ and enumerate the trees realizing $u(k)$, so-called {\it $k$-universal trees}, for small $k$.

We conclude by stating some open problems and directions for future research in Section~\ref{sec:openproblems}.

\section{Preliminaries}
\label{prelim}
\begin{definition}
For any set $A$ and nonnegative integer $j$, $\binom{A}{j}=\left\{X\subseteq A: |X|=j\right\}$. Let $[n] = \{1,\ldots,n\}$.
\end{definition}

\begin{definition}
A \emph{rooted tree} is a tree $T$ where a special vertex, denoted by $r_T$, is identified as the root of $T$. The \emph{parent} of a non-root vertex $v$ is $v$'s neighbor on the $r_T$-$v$ path in $T$ and $y$ is a \emph{child of $v$} if $v$ is the parent of $y$. The \emph{leaves} of $T$ are the vertices with no children. We let $\bbL(T)$ denote the set of leaves and $|T|$ denote the number of leaves. If $|T|=n$ we say that $T$ is a \emph{size-$n$ tree}.
\end{definition}

We remark that whenever we write $T = T'$,  $T$ and $T'$ are equal up to isomorphism. 

\begin{definition}
A \emph{rooted binary tree} is a rooted tree $T$ where every vertex has either no children (i.e., is a leaf) or two children.
\end{definition}

Note that in a rooted binary tree we have that either the root is a leaf (and the size of the tree is $1$) or the root has degree $2$ and non-root internal vertices have degree $3$. One can check that for $n\le 3$ the size-$n$ rooted binary trees are unique.

\begin{definition} Let $T_1,T_2$ be rooted binary trees. Let $T_1\oplus T_2$ be the rooted binary tree $T$ obtained by taking a new vertex as $r_T$ and joining the roots of $T_1$ and $T_2$ to $r_T$. 
If $T=T_1\oplus T_2$, we say $T$ is composed of $T_1$ and $T_2$ and the root-split of $T$ is the set 
$\{|T_1|,|T_2|\}$ 
(which has only one element when $|T_1|=|T_2|$). 
\end{definition}

An example of this operation is shown in Figure~\ref{fig:composed}. Observe that the root split of $T_1\oplus T_2$ is $\{4,5\}$. Since we will consider unlabeled rooted binary trees up to isomorphism, we consider this operation to be commutative. 

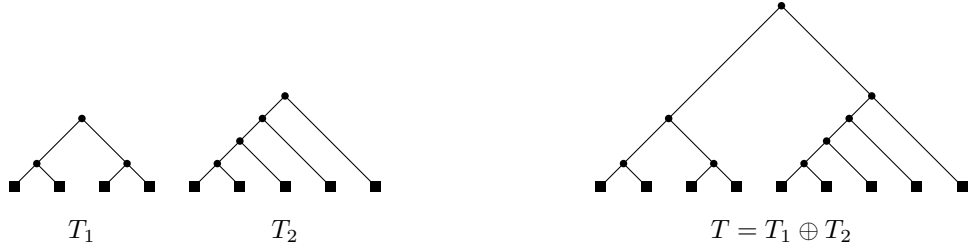
\begin{figure}[htbp]
\centering
\begin{tikzpicture}[scale=.6]

    \node[fill=black,rectangle,inner sep=2pt]   at (7,0) {};
  \node[fill=black,rectangle,inner sep=2pt]   at (8,0) {};
   \node[fill=black,rectangle,inner sep=2pt]   at (9,0) {};      
   \node[fill=black,rectangle,inner sep=2pt]   at (10,0) {};      
         \node[fill=black,circle,inner sep=1pt]  at (7.5,0.5) {};
         \node[fill=black,circle,inner sep=1pt]  at (9.5,.5) {};
            \node[fill=black,circle,inner sep=1pt]  at (8.5,1.5) {};
         \draw (7,0)--(7.5,.5)--(8,0);
          \draw (7.5,.5)--(8.5,1.5)--(9.5,0.5);
         \draw (9,0)--(9.5,.5)--(10,0);
         \node at (8.5,-1) {$T_1$};

      \node[fill=black,rectangle,inner sep=2pt]   at (11,0) {};
  \node[fill=black,rectangle,inner sep=2pt]   at (12,0) {};
   \node[fill=black,rectangle,inner sep=2pt]   at (13,0) {};      
   \node[fill=black,rectangle,inner sep=2pt]   at (14,0) {};     
   \node[fill=black,rectangle,inner sep=2pt]   at (15,0) {};      
         \node[fill=black,circle,inner sep=1pt]  at (11.5,0.5) {};
         \node[fill=black,circle,inner sep=1pt]  at (12,1) {};
            \node[fill=black,circle,inner sep=1pt]  at (12.5,1.5) {};
            \node[fill=black,circle,inner sep=1pt]  at (13,2) {};   
         \draw (11,0)--(11.5,.5)--(12,0);
          \draw (11.5,.5)--(12,1)--(13,0);
         \draw (12,1)--(12.5,1.5)--(14,0);
           \draw (12.5,1.5)--(13,2)--(15,0);
         \node at (13,-1) {$T_2$};

      \node[fill=black,rectangle,inner sep=2pt]   at (20,0) {};
  \node[fill=black,rectangle,inner sep=2pt]   at (21,0) {};
   \node[fill=black,rectangle,inner sep=2pt]   at (22,0) {};      
   \node[fill=black,rectangle,inner sep=2pt]   at (23,0) {};      
         \node[fill=black,circle,inner sep=1pt]  at (20.5,0.5) {};
         \node[fill=black,circle,inner sep=1pt]  at (22.5,.5) {};
            \node[fill=black,circle,inner sep=1pt]  at (21.5,1.5) {};
         \draw (20,0)--(20.5,.5)--(21,0);
          \draw (20.5,.5)--(21.5,1.5)--(22.5,0.5);
         \draw (22,0)--(22.5,.5)--(23,0);

      \node[fill=black,rectangle,inner sep=2pt]   at (24,0) {};
  \node[fill=black,rectangle,inner sep=2pt]   at (25,0) {};
   \node[fill=black,rectangle,inner sep=2pt]   at (26,0) {};      
   \node[fill=black,rectangle,inner sep=2pt]   at (27,0) {};     
   \node[fill=black,rectangle,inner sep=2pt]   at (28,0) {};      
         \node[fill=black,circle,inner sep=1pt]  at (24.5,0.5) {};
         \node[fill=black,circle,inner sep=1pt]  at (25,1) {};
            \node[fill=black,circle,inner sep=1pt]  at (25.5,1.5) {};
            \node[fill=black,circle,inner sep=1pt]  at (26,2) {};   
         \draw (24,0)--(24.5,.5)--(25,0);
          \draw (24.5,.5)--(25,1)--(26,0);
         \draw (25,1)--(25.5,1.5)--(27,0);
           \draw (25.5,1.5)--(26,2)--(28,0);
            \node[fill=black,circle,inner sep=1pt]  at (24,4) {};
            \draw (21.5,1.5)--(24,4)--(26,2);
            \node at (24,-1) {$T=T_1\oplus T_2$};
\end{tikzpicture} 
\caption{The tree $T$ is composed of $T_1$ and $T_2$.}
\label{fig:composed}
\end{figure}

\begin{definition}
Let $i\in\mathbb{N}$ and $T_1,T_2$ be rooted binary trees. We define the operations $(T_1\oplus)^i  T_2$  as follows:
$(T_1\oplus)^0T_2=T_2$ and for $i>0$, $(T_1\oplus)^i(T_2)=T_1\oplus((T_1\oplus)^{i-1}T_2)$.
\end{definition}

\begin{definition} The \emph{size-$n$ caterpillar $C_n$} is the rooted binary tree $C_n=(C_1\oplus)^{n-1} C_{1}$, where $C_1$ is the singleton vertex rooted tree.
\end{definition}

\begin{definition} The \emph{height-$h$ complete binary tree $B_h$} is defined as follows: $B_0=C_1$ and for $h>0$
$B_h=B_{h-1}\oplus B_{h-1}$
\end{definition}

\begin{definition}
Let $T$ be a rooted binary tree. For a set $S \subseteq \mathbb{L}(T)$, the \emph{rooted binary subtree $T[S]$ induced by $S$} is obtained by first taking the minimal connected subgraph $T'$ of $T$ containing $S$, letting $r_{T[S]}$ be the closest vertex of $T'$ to $r_T$, and then suppressing all other degree-2 vertices in $T'$.
\end{definition}

For example, consider the tree $T$ with $|T|=5$ shown on the left in Figure~\ref{fig:inducedsubtree}. Four choices of $S\subseteq \mathbb{L}(T)$ induce $C_4$ and one choice induces $B_2$, which are shown on the right.

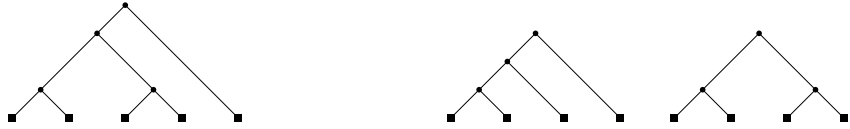
\begin{figure}%[h!]
\centering
\scalebox{0.75}{
\begin{tikzpicture}
\pic at (0,0) {tree5.2};
\end{tikzpicture}
\qquad 
\qquad \qquad \qquad \qquad
\begin{tikzpicture}
\pic at (0,0) {tree4.1};
\end{tikzpicture}
\qquad 
\begin{tikzpicture}
\pic at (0,0) {tree4.2};
\end{tikzpicture}
}
\caption{For the rooted binary tree on the left, the possible size-4 induced subtrees are shown on the right.}
\label{fig:inducedsubtree}
\end{figure}

\begin{definition} Let $A$ be a multiset. For any object $x$, $m_A(x)$ denotes the multiplicity of $x$ in $A$ (in particular, $m_A(x)>0$ means that $x\in A$).
\end{definition}

\begin{definition}
Let $T$ be a size-$n$ rooted binary tree and $j\in[n]$. 
The \emph{size-$j$ deck $\D_j(T)$ of $T$} is 
\[\D_j(T)=\left\{T[S]: S\in\binom{\bbL(T)}{j}\right\}.\]
The \emph{size-$j$ multideck $\Dm_j(T)$ of $T$} is the multiset of size-$j$ trees, where for any size-$j$ tree $T'$ 
\[m_{\Dm_j(T)}(T')=
\left\vert\left\{S\in\binom{\bbL(T)}{j}:T[S]= T'\right\}\right\vert.\]
\end{definition}

Note that if $T$ is of size $n$, then $\D_n(T)=\Dm_n(T)=\{T\}$. Moreover, note that when we consider the size-$(\vert T \vert -1)$ (multi)deck of a tree, we often use the notation $\mathcal{D}(T)=\mathcal{D}_{|T|-1}(T)$ and $\mathcal{D}^{(m)}(T)=\mathcal{D}^{(m)}_{|T|-1}(T)$, and we refer to them as the deck and multideck of $T$.

\begin{definition}
Let $n$ be a positive integer and $j\in[n]$. We say that the \emph{size-$j$ decks} (resp. \emph{multidecks}) \emph{determine the size-$n$ trees} if
whenever $T_1,T_2$ are size-$n$ trees with $T_1 \neq T_2$,
we have $\D_j(T_1)\ne\D_j(T_2)$ (resp. $\Dm_j(T_1)\ne\Dm_j(T_2)$). If $T$ is a size-$n$ tree, 
the \emph{size-$j$ deck} (resp. \emph{multideck})  \emph{determines $T$} if
whenever $T_1$ is size-$n$ tree with $T_1 \neq T$,
we have $\D_j(T_1)\ne\D_j(T)$ (resp. $\Dm_j(T_1)\ne\Dm_j(T)$).
\end{definition}

It is obvious that the size-$n$ decks and multidecks determine the size-$n$ trees.

\begin{lemma} Let $i,j,n$ be positive integers with $i<j\le n$.
If the size-$i$ decks (resp. multidecks) determine the size-$n$ trees, then so do the size-$j$ decks (resp. multidecks).
\end{lemma}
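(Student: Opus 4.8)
The plan is to prove the stronger statement that the size-$i$ (multi)deck of a size-$n$ tree is recoverable from its size-$j$ (multi)deck; the lemma then follows immediately by contraposition. The key technical ingredient is the \emph{transitivity} of the induced-subtree operation: if $S\subseteq S'\subseteq\bbL(T)$, then, identifying $\bbL(T[S'])$ with $S'$ in the natural way, one has $(T[S'])[S]=T[S]$. This is intuitively clear — the minimal connected subgraph of $T$ spanning $S$ lies inside the one spanning $S'$, and suppressing degree-$2$ vertices may be carried out in either order — but I would include a short verification, taking care that the chosen root (the vertex of the spanning subgraph closest to $r_T$) is the same whether computed inside $T$ or inside $T[S']$, and checking the degenerate case $|S|=1$.

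Granting transitivity, for the deck version I would establish the set identity
\[
\D_i(T)=\bigcup_{T'\in\D_j(T)}\D_i(T').
\]
The inclusion ``$\subseteq$'' holds because any $S\in\binom{\bbL(T)}{i}$ can (since $j\le n=|\bbL(T)|$) be extended to some $S'\in\binom{\bbL(T)}{j}$, whereupon $T[S]=(T[S'])[S]\in\D_i(T[S'])$ with $T[S']\in\D_j(T)$; the inclusion ``$\supseteq$'' is immediate from transitivity applied to $S\subseteq S'$ with $|S|=i$ and $|S'|=j$. Thus $\D_i(T)$ is determined by $\D_j(T)$, so if $T_1\ne T_2$ are size-$n$ trees with $\D_j(T_1)=\D_j(T_2)$, then $\D_i(T_1)=\D_i(T_2)$, contradicting the hypothesis that size-$i$ decks determine the size-$n$ trees.

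For the multideck version I would instead double-count, for each fixed size-$i$ tree $T''$, the pairs $(S,S')$ with $S\subseteq S'\subseteq\bbL(T)$, $|S|=i$, $|S'|=j$, and $T[S]=T''$. Summing over the choice of $S$ first yields $\binom{n-i}{j-i}\,m_{\Dm_i(T)}(T'')$, since each qualifying $S$ extends to $S'$ in $\binom{n-i}{j-i}$ ways; summing over the choice of $S'$ first and applying transitivity yields $\sum_{T'}m_{\Dm_j(T)}(T')\,m_{\Dm_i(T')}(T'')$, the sum ranging over size-$j$ trees $T'$. Because $\binom{n-i}{j-i}>0$, this expresses $m_{\Dm_i(T)}(T'')$ purely in terms of $\Dm_j(T)$, and the same contraposition argument as above finishes the proof.

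I expect the only genuine obstacle to be stating and verifying transitivity cleanly — in particular, confirming that the root assignment is consistent under iterated restriction and treating the size-$1$ case separately; once that is in hand, both the set identity and the counting identity are routine bookkeeping.
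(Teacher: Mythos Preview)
Your proposal is correct and follows essentially the same route as the paper: the deck case is handled via the identity $\D_i(T)=\bigcup_{T'\in\D_j(T)}\D_i(T')$ obtained from transitivity, and the multideck case via the same double count that yields the factor $\binom{n-i}{j-i}$ relating the two multiplicities. The only difference is cosmetic---the paper packages the multideck computation as a multiset union $\mathcal{X}$ rather than an explicit sum over pairs $(S,S')$---and your extra care about verifying transitivity (root consistency, the $|S|=1$ case) is welcome but not a departure in strategy.
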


\begin{proof}
It is enough to show that the size-$j$ decks (resp. multidecks) determine the size-$i$ decks (resp. multidecks). This follows for the decks easily, as for $S\subseteq S'\subseteq\bbL(T)$ we have $(T[S'])[S]=T[S]$, consequently
\[\D_i(T)=\bigcup_{T'\in\D_j(T)} \D_i(T').\]

Consider the corresponding statement for multidecks. Let
$A,B$ be multisets of trees, and $A\cup B$ be the multiset of trees defined by the following: for every tree $T'$, $m_{A\cup B}(T')=m_A(T')+m_B(T')$. Consider the multiset 
\[\mathcal{X}=\bigcup_{T'\in\Dm_j(T)} \Dm_i(T').\]
By the previous observation, $\mathcal{X}$ has the same elements as $\D_i(T)$, with not necessarily the same multiplicities as they appear in $\Dm_i(T)$. We claim
that for any $T'\in\D_i(T)$, $m_{\mathcal{X}}(T')=\binom{n-i}{j-i}m_{\Dm_i(T)}(T')$. This will
prove the claim, as we can obtain $\Dm_i(T)$ from $\mathcal{X}$.

Let $T'\in\D_i(T)$ and let
$M=m_{\Dm_i(T)}(T')$. Let $S_1,\ldots,S_M$
be an enumeration of the size-$i$ leaf sets $S$ with $T[S]= T'$. Each $S_i$ contributes $1$ to the multiplicity
of $T'$ in $\Dm_i(T)$, and $\binom{n-i}{j-i}$ to the multiplicity
of $T'$ in $\mathcal{X}$, as there are $\binom{n-i}{j-i}$ size-$j$ leaf sets of $T$ that contain $S_i$. 
\end{proof}

\section{Reconstruction} \label{reconstr}
In this section, we consider the reconstruction problem for decks and multidecks of rooted binary trees, which we now formally define.

\begin{definition}
For each positive integer $n \geq 4$, let
$R(n)$ (resp. $R^{(m)}(n)$) denote the minimum positive integer $j\in[n]$ such that the size $R(n)$-deck (resp. size $R^{(m)}(n)$-multideck) determines the size-$n$ trees.  We refer to $R(n)$ and $R^{(m)}(n)$ as the deck reconstruction number and multi-deck reconstruction number, respectively. 
\end{definition}

Note that $R(n)$ (resp. $R^{(m)}(n)$) is only defined for $n \geq 4$, since for $n \leq 3$, the size-$n$ binary trees are unique and therefore trivially reconstructible. 

For the same reason, i.e., since for $k \le 3$, the size-$k$ binary trees are unique, for $n \ge 4$ we have $R(n) \ge 4$ and $R^{(m)}(n) \ge 4$. We noted earlier that $R(n) \le n$ and $R^{(m)}(n) \le n$. Consequently, $R(4)=R^{(m)}(4)=4$. Figure~\ref{fig:smalltrees} shows that $R^{(m)}(5) = 4$ and $R(5) > 4$, which implies  $R(5) = 5$. 

\begin{figure}[!ht]
\centering
\begin{tikzpicture}[scale=.6]
\node[fill=black,rectangle,inner sep=2pt]   at (-3,0) {};
\node at (-3,.5) {$C_1=B_0$};

   \node[fill=black,rectangle,inner sep=2pt]   at (-1,0) {};
   \node[fill=black,rectangle,inner sep=2pt]   at (0,0) {};
          \node[fill=black,circle,inner sep=1pt]  at (-.5,0.5) {};
          \draw (-1,0)--(-.5,.5)--(0,0);
\node at (-.5,1) {$C_2=B_1$};

  \node[fill=black,rectangle,inner sep=2pt]   at (2,0) {};
   \node[fill=black,rectangle,inner sep=2pt]   at (3,0) {};
    \node[fill=black,rectangle,inner sep=2pt]   at (4,0) {};      
          \node[fill=black,circle,inner sep=1pt]  at (2.5,0.5) {};
          \node[fill=black,circle,inner sep=1pt]  at (3,1) {};
          \draw (2,0)--(2.5,.5)--(3,0);
           \draw (2.5,.5)--(3,1)--(4,0);
\node at (3,1.5) {$C_3$};

     \node[fill=black,rectangle,inner sep=2pt]   at (6,0) {};
   \node[fill=black,rectangle,inner sep=2pt]   at (7,0) {};
    \node[fill=black,rectangle,inner sep=2pt]   at (8,0) {};      
    \node[fill=black,rectangle,inner sep=2pt]   at (9,0) {};      
          \node[fill=black,circle,inner sep=1pt]  at (6.5,0.5) {};
          \node[fill=black,circle,inner sep=1pt]  at (7,1) {};
             \node[fill=black,circle,inner sep=1pt]  at (7.5,1.5) {};
          \draw (6,0)--(6.5,.5)--(7,0);
           \draw (6.5,.5)--(7,1)--(8,0);
          \draw (7,1)--(7.5,1.5)--(9,0);
\node at (7.5,2) {$C_4$};

     \node[fill=black,rectangle,inner sep=2pt]   at (11,0) {};
   \node[fill=black,rectangle,inner sep=2pt]   at (12,0) {};
    \node[fill=black,rectangle,inner sep=2pt]   at (13,0) {};      
    \node[fill=black,rectangle,inner sep=2pt]   at (14,0) {};      
          \node[fill=black,circle,inner sep=1pt]  at (11.5,0.5) {};
          \node[fill=black,circle,inner sep=1pt]  at (13.5,.5) {};
             \node[fill=black,circle,inner sep=1pt]  at (12.5,1.5) {};
          \draw (11,0)--(11.5,.5)--(12,0);
           \draw (11.5,.5)--(12.5,1.5)--(13.5,0.5);
          \draw (13,0)--(13.5,.5)--(14,0);
\node at (12.5,2) {$B_2$};              
\end{tikzpicture}
\begin{tikzpicture}[scale=.6]
\node[] at (14,3) {}; %for space
       \node[fill=black,rectangle,inner sep=2pt]   at (11,0) {};
   \node[fill=black,rectangle,inner sep=2pt]   at (12,0) {};
    \node[fill=black,rectangle,inner sep=2pt]   at (13,0) {};      
    \node[fill=black,rectangle,inner sep=2pt]   at (14,0) {};     
    \node[fill=black,rectangle,inner sep=2pt]   at (15,0) {};      
          \node[fill=black,circle,inner sep=1pt]  at (11.5,0.5) {};
          \node[fill=black,circle,inner sep=1pt]  at (12,1) {};
             \node[fill=black,circle,inner sep=1pt]  at (12.5,1.5) {};
             \node[fill=black,circle,inner sep=1pt]  at (13,2) {};   
          \draw (11,0)--(11.5,.5)--(12,0);
           \draw (11.5,.5)--(12,1)--(13,0);
          \draw (12,1)--(12.5,1.5)--(14,0);
            \draw (12.5,1.5)--(13,2)--(15,0);
\node at (13,-1) {$(5,0)$};

      \node[fill=black,rectangle,inner sep=2pt]   at (19,0) {};
   \node[fill=black,rectangle,inner sep=2pt]   at (20,0) {};
    \node[fill=black,rectangle,inner sep=2pt]   at (21,0) {};      
    \node[fill=black,rectangle,inner sep=2pt]   at (22,0) {};    
    \node[fill=black,rectangle,inner sep=2pt]   at (23,0) {};           
          \node[fill=black,circle,inner sep=1pt]  at (19.5,0.5) {};
          \node[fill=black,circle,inner sep=1pt]  at (21.5,.5) {};
             \node[fill=black,circle,inner sep=1pt]  at (20.5,1.5) {};
              \node[fill=black,circle,inner sep=1pt]  at (21,2) {};            
          \draw (19,0)--(19.5,.5)--(20,0);
           \draw (19.5,.5)--(20.5,1.5)--(21.5,0.5);
          \draw (21,0)--(21.5,.5)--(22,0);
         \draw (20.5,1.5)--(21,2)--(23,0);
\node at (21,-1) {$(4,1)$};

     \node[fill=black,rectangle,inner sep=2pt]   at (30,0) {};
   \node[fill=black,rectangle,inner sep=2pt]   at (31,0) {};
          \node[fill=black,circle,inner sep=1pt]  at (30.5,0.5) {};
          \draw (30,0)--(30.5,.5)--(31,0);            
  \node[fill=black,rectangle,inner sep=2pt]   at (27,0) {};
   \node[fill=black,rectangle,inner sep=2pt]   at (28,0) {};
    \node[fill=black,rectangle,inner sep=2pt]   at (29,0) {};      
          \node[fill=black,circle,inner sep=1pt]  at (27.5,0.5) {};
          \node[fill=black,circle,inner sep=1pt]  at (28,1) {};
             \node[fill=black,circle,inner sep=1pt]  at (29,2) {};  
          \draw (27,0)--(27.5,.5)--(28,0);
           \draw (27.5,.5)--(28,1)--(29,0);
       \draw (28,1)--(29,2)--(30.5,0.5);
\node at (29,-1) {$(2,3)$};   
\end{tikzpicture} 
\caption{Binary trees of size at most $5$. The pairs $(i,j)$ under the size-$5$ trees show the multiplicities of $C_4$ and $B_2$
in the size-$4$ multideck of the tree.}
\end{figure}
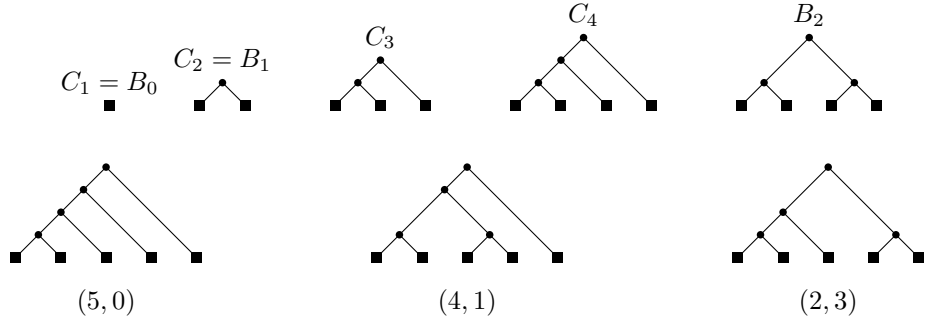\label{fig:smalltrees}

\subsection{Lower bound}

We begin by establishing a lower bound for the deck size required for reconstruction.
\begin{theorem}\label{decklowerbound}
For any $n\geq 6$, we have $R(n)> 2\lceil\frac{n}{4}\rceil$. 
\end{theorem}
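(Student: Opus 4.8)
The plan is to show that the size-$k$ deck does not determine the size-$n$ trees, where $k := 2\lceil n/4\rceil$; by definition of $R(n)$ this gives $R(n) > k$. Concretely, writing $m := \lceil n/4\rceil$ so that $k = 2m$, I will exhibit the two non-isomorphic size-$n$ trees
\[
T_1 = C_m\oplus C_{n-m},\qquad T_2 = C_{m+1}\oplus C_{n-m-1},
\]
and verify that they have the same size-$k$ deck. First I would clear away the bookkeeping: the root-splits $\{m,n-m\}$ and $\{m+1,n-m-1\}$ agree only when $n = 2m+1$, and a one-line estimate shows $2m+1 < n$ for all $n\ge 6$, so $T_1\ne T_2$; the inequalities $m\ge 2$, $n-m-1\ge 1$, $m+1\le n-m-1$ that make all four caterpillars well defined are similarly elementary for $n\ge 6$. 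The one genuinely load-bearing inequality — the one that dictates the exact bound $2\lceil n/4\rceil$ — is $n - m \ge 2m$, i.e. $n \ge 3\lceil n/4\rceil$; writing $n = 4q - r$ with $0\le r\le 3$ one has $m = q$ and this reduces to $q\ge r$, which holds for every $n\ge 6$ (the residues $n = 6,7,8$ being checked directly).

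The technical core is a formula for $\D_j$ of a composition of two caterpillars. I would first record that any leaf set $S$ of a caterpillar satisfies $C_a[S] = C_{|S|}$, by a short induction on $a$ using $C_a = C_1\oplus C_{a-1}$ and splitting on whether $S$ contains the leaf child of the root. It follows that a size-$j$ leaf set of $C_a\oplus C_b$ with $p$ leaves in the first part and $j - p$ in the second induces $C_p\oplus C_{j-p}$ (reading $C_0\oplus C_j$ and $C_j\oplus C_0$ as $C_j$), and hence
\[
\D_j(C_a\oplus C_b) = \bigl\{\, C_p\oplus C_{j-p} \ :\ \max(0,\,j-b)\le p\le\min(a,\,j)\,\bigr\}.
\]
Because $\oplus$ is commutative, $C_p\oplus C_{j-p}$ depends only on $\{p,j-p\}$, and every tree of this form (the endpoints giving $C_j = C_1\oplus C_{j-1}$) lies in $\mathcal{E}_j := \{\,C_p\oplus C_{j-p} : 1\le p\le j-1\,\}$; thus $\D_j(C_a\oplus C_b)\subseteq\mathcal{E}_j$ automatically, and equality $\D_k(C_a\oplus C_b) = \mathcal{E}_k$ amounts to the admissible range of $p$ meeting every unordered pair of positive integers summing to $k$.

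It then remains to run the two cases, and this is exactly where $n\ge 3m$ enters. For $T_1 = C_m\oplus C_{n-m}$ with $j = k = 2m$, the inequality makes the lower constraint $p \ge k - (n-m) = 3m - n$ vacuous, so $p$ ranges over $\{0,1,\dots,m\}$; the corresponding trees $C_k,\, C_1\oplus C_{k-1},\, C_2\oplus C_{k-2},\dots,C_m\oplus C_m$ (the first two coinciding) are exactly all compositions of two caterpillars of total size $k$, i.e. all of $\mathcal{E}_k$. For $T_2 = C_{m+1}\oplus C_{n-m-1}$, the inequality gives $k - (n-m-1) = 3m - n + 1 \le 1$, so the admissible range contains $\{1,\dots,m+1\}$, and the trees $C_p\oplus C_{k-p}$ for $p = 1,\dots,m+1$ again exhaust $\mathcal{E}_k$ — the value $p = m+1$ merely re-covering the pair reached at $p = m-1$, which uses $m\ge 2$ to know $C_{m-1}$ is a legitimate caterpillar and that $m+1\le k-1$. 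I do not anticipate any conceptual difficulty here; the only real work is disciplined bookkeeping — computing the admissible $p$-ranges correctly in the boundary cases (notably $n = 3m$, e.g. $n = 6$ and $n = 9$, where the range loses its $p = 0$ endpoint), keeping every caterpillar size at least $1$, and confirming that the enumerated pairs really do cover $\mathcal{E}_k$ — all of it routine but all of it to be done uniformly for $n\ge 6$.
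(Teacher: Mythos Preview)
Your argument is correct, and it is genuinely different from---and simpler than---the paper's. The paper splits into four lemmas according to $n\bmod 4$ and in each case builds a pair of ``double compositions'' $(C_i\oplus C_j)\oplus(C_k\oplus C_\ell)$, then argues (via a shared size-$(n-1)$ subtree $S$) that both size-$2\lceil n/4\rceil$ decks collapse to $\D_{2\lceil n/4\rceil}(S)$. Your construction replaces all of this with a single uniform pair $C_m\oplus C_{n-m}$ and $C_{m+1}\oplus C_{n-m-1}$ and the clean observation that, once $n\ge 3m$, \emph{every} tree in the size-$2m$ deck of a suitable $C_a\oplus C_b$ is a two-caterpillar composition and in fact every such composition occurs; so both decks equal the fixed set $\mathcal{E}_{2m}$. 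This buys you a case-free proof and, incidentally, handles $n=7$ directly (the paper's Lemma for $n\equiv 3\pmod 4$ is stated only for $n\ge 11$). What the paper's approach buys in exchange is a family of examples further from caterpillars, which might be more useful if one later wants to sharpen the bound; your pairs have the smallest possible decks at size $2m$, so they cannot witness anything stronger without modification. One small wording fix: the ``$p=0$ endpoint lost at $n=3m$'' remark applies to $T_2$ (where the lower cutoff is $3m-n+1=1$), not to $T_1$ (where it is $3m-n=0$).
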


This will follow immediately from
Lemmas~\ref{lm:0mod4},~\ref{lm:3mod4},~\ref{lm:2mod4}, and~\ref{lm:1mod4}, which we will state and prove in this section. In these lemmas, we make extensive use of trees formed by starting with $B_2$ and identifying leaves with caterpillars. Such trees are of the form $(C_i \oplus C_j) \oplus (C_k \oplus C_\ell)$. Examples of $(C_3\oplus C_1)\oplus (C_2\oplus C_2)$, $(C_3\oplus C_2)\oplus (C_2\oplus C_1)$, and $(C_3\oplus C_1)\oplus (C_2\oplus C_1)$, as in the construction of Lemma~\ref{lm:0mod4} for $k=2$, appear in Figure~\ref{fig:0mod4}. Throughout, we use the fact that every induced subtree of a caterpillar $C_n$ is another caterpillar $C_k$ with $1\leq k\leq n$. 

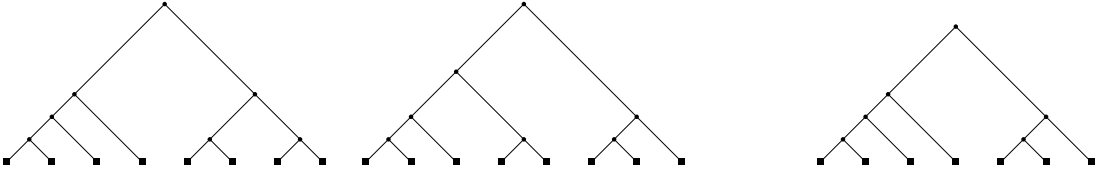
\begin{figure}%[h!]
\centering
\scalebox{0.6}{
\begin{tikzpicture}
\pic at (0,0) {tree4.1};
\pic at (4,0) {tree2};
\pic at (6,0) {tree2};
\draw (1.5,1.5) -- (3.5,3.5) -- (6.5,0.5);
\draw (4.5,0.5) -- (5.5,1.5);
\node[fill=black,circle,inner sep=1pt]  at (5.5,1.5) {};
\node[fill=black,circle,inner sep=1pt]  at (3.5,3.5) {};
\end{tikzpicture}
\qquad 
\begin{tikzpicture}
\pic at (0,0) {tree3};
\pic at (3,0) {tree2};
\pic at (5,0) {tree3};
\draw (1,1) -- (2,2) -- (3.5,0.5);
\draw (2,2) -- (3.5,3.5) -- (6,1);
\node[fill=black,circle,inner sep=1pt]  at (3.5,3.5) {};
\node[fill=black,circle,inner sep=1pt]  at (2,2) {};
\end{tikzpicture}
\qquad 
\qquad 
\qquad 
\qquad
\begin{tikzpicture}
\pic at (0,0) {tree4.1};
\pic at (4,0) {tree3};
\draw (1.5,1.5) -- (3,3) -- (5,1);
\node[fill=black,circle,inner sep=1pt]  at (3,3) {};
\end{tikzpicture}
}
\caption{The two 8-leaf trees on the left and the 7-leaf tree on the right have the same size-4 deck.}
\label{fig:0mod4}
\end{figure}

\begin{lemma}\label{lm:0mod4}
For any $k\ge 2$ there exist nonisomorphic trees $T_1$ and $T_2$ of size $4k$ such that $\mathcal{D}_{2k}(T_1)=\mathcal{D}_{2k}(T_2)$. 
\end{lemma}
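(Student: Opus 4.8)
The plan is to exhibit an explicit family of pairs $(T_1, T_2)$ of the form $(C_i \oplus C_j) \oplus (C_k \oplus C_\ell)$, chosen so that the four caterpillar sizes sum to $4k$ in both cases but the two multisets $\{\{i,j\},\{k,\ell\}\}$ differ, and then argue that their size-$2k$ decks coincide. The figure~\ref{fig:0mod4} suggests the shape of the construction: for the case $k=2$ (size $8$), take $T_1 = (C_3 \oplus C_1) \oplus (C_2 \oplus C_2)$ and $T_2 = (C_3 \oplus C_2) \oplus (C_2 \oplus C_1)$. For general $k \ge 2$, I would take $T_1 = (C_{k+1} \oplus C_1) \oplus (C_k \oplus C_k)$ and $T_2 = (C_{k+1} \oplus C_k) \oplus (C_k \oplus C_1)$ (or an analogous pair), so that both have $4k$ leaves; these are nonisomorphic because, e.g., one has a root-split into a $C_{k+1}\oplus C_1$ half and the other does not. (I would double-check the exact arithmetic and nonisomorphism at the $k=2$ base case against the figure.)

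The key structural fact, stated in the paragraph before the lemma, is that every induced subtree of a caterpillar $C_m$ is a caterpillar $C_t$ with $1 \le t \le m$. Using this, any $2k$-leaf induced subtree $T[S]$ of a tree $(C_a \oplus C_b)\oplus(C_c \oplus C_d)$ is obtained by choosing $a' \le a$, $b' \le b$, $c' \le c$, $d' \le d$ leaves from the four caterpillar "blocks" with $a' + b' + c' + d' = 2k$, and then the induced subtree depends only on how many of these four counts are nonzero and which caterpillar sits where relative to the two internal $\oplus$-vertices and the root. Concretely, $T[S]$ is itself of the form $(C_{a'} \oplus C_{b'}) \oplus (C_{c'} \oplus C_{d'})$ when all four are positive (with the convention that $C_0 \oplus C_t = C_t$, collapsing a degree-2 vertex), and degenerates to smaller combinations when some blocks are empty. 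So the plan is: describe $\D_{2k}(T)$ for a generic such $T$ as a set indexed by the achievable "profiles" of block-sizes, and then verify that $T_1$ and $T_2$ realize exactly the same set of profiles.

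The main step is a counting/matching argument: show that for every choice $(a',b',c',d')$ of nonnegative integers with $a' \le a_1$, $b' \le b_1$, etc. summing to $2k$ that arises from $T_1$, there is a corresponding choice arising from $T_2$ giving an isomorphic induced subtree, and vice versa. The point of putting a $C_k$ of the same size in "matching position" in both trees, and swapping only the roles of the $C_1$ and the extra $C_1$-vs-$C_k$ pieces, is that at threshold $2k$ one simply cannot select enough leaves to see the difference — whenever the induced subtree is large enough to "notice" whether a given pendant block was $C_1$ or $C_k$, selecting $2k$ leaves forces some other block to be emptied out, symmetrizing the two pictures. I expect the main obstacle to be bookkeeping: making the case analysis over which of the four blocks are nonempty (and in particular the degenerate cases where a block contributes $0$ or $1$ leaf, causing a root-vertex or internal vertex to be suppressed) clean enough that the equality $\D_{2k}(T_1) = \D_{2k}(T_2)$ is transparent rather than a brute-force check; a well-chosen normal form for the profiles, together with an explicit bijection on profiles, should handle it.
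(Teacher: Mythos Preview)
Your proposed construction is wrong for $k\ge 3$: the trees $(C_{k+1}\oplus C_1)\oplus(C_k\oplus C_k)$ and $(C_{k+1}\oplus C_k)\oplus(C_k\oplus C_1)$ have only $3k+2$ leaves, not $4k$. You correctly read off the $k=2$ case from Figure~\ref{fig:0mod4}, but there the block $C_1$ is really $C_{k-1}$; the paper's general pair is
\[
T_1=(C_{k+1}\oplus C_{k-1})\oplus(C_k\oplus C_k),\qquad
T_2=(C_{k+1}\oplus C_k)\oplus(C_k\oplus C_{k-1}),
\]
each of size $4k$. So the first concrete step of your proposal does not work as written, and your heuristic about ``a $C_1$ versus a $C_k$'' is not the phenomenon actually being exploited.

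Even after fixing the construction, your plan of classifying all size-$2k$ induced subtrees by ``profiles'' $(a',b',c',d')$ and building a bijection is more laborious than necessary. The paper avoids that bookkeeping entirely by introducing the auxiliary tree
\[
S=(C_{k+1}\oplus C_{k-1})\oplus(C_k\oplus C_{k-1})
\]
of size $4k-1$, which is an induced subtree of both $T_1$ and $T_2$, so automatically $\D_{2k}(S)\subseteq\D_{2k}(T_i)$. The only work is then the reverse containment: for $T_1$ one observes that any $R_2\in\D_j(C_k\oplus C_k)$ with $j\le 2k-1$ already lies in $\D_j(C_k\oplus C_{k-1})$, and symmetrically for $T_2$. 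This ``common smaller tree'' trick reduces what you anticipated as a case analysis over emptied blocks to a one-line observation per tree, and it is the idea you are missing.
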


\begin{proof}
Let $k\ge 2$ and define 
\begin{align*}
T_1&=(C_{k+1}\oplus C_{k-1})\oplus (C_k\oplus C_k), \\ 
T_2&=(C_{k+1}\oplus C_{k})\oplus (C_{k}\oplus C_{k-1}), \text{ and } \\ 
S&=(C_{k+1}\oplus C_{k-1})\oplus (C_k\oplus C_{k-1})
\end{align*}
One can check that $T_1 \neq T_2$. We claim that $\mathcal{D}_{2k}(T_1)=\D_{2k}(S)=\D_{2k}(T_2)$. Since $S$ is an induced subtree of $T_i$ for each $i$, we have $\mathcal{D}_{2k}(S)\subseteq \mathcal{D}_{2k}(T_i)$. It remains to show the other containment for each tree $T_i$.

Assume $T\in \mathcal{D}_{2k}(T_1)$. Then it is possible for $T=C_{k+1}\oplus C_{k-1}$, or $T=C_k\oplus C_k$, in which case $T \in D_{2k}(S)$. Otherwise, $T=R_1\oplus R_2$ where $R_1\in\D_{2k-j}(C_{k+1}\oplus C_{k-1})$ and 
$R_2\in \D_{j}(C_k\oplus C_k)$ for some $j\in[2k-1]$. Since then $R_2 \in \D_j( C_k \oplus C_{k-1})$ when $j \in [2k-1]$, we have $T \in D_{2k}(S)$.

Assume now that $T \in \D_{2k}(T_2)$. Then it is possible for $T =C_k \oplus C_k$, or $T=C_{k+1} \oplus C_{k-1}$, in which case $T \in D_{2k}(S)$. Otherwise, $T=R_1 \oplus R_2$, where $R_1 \in \D_{j}(C_{k+1} \oplus C_k)$ and $R_2 \in \D_{2k-j}(C_k \oplus C_{k-1})$ for some $j \in [2k-1]$. Since then $R_1 \in \D_j(C_{k+1} \oplus C_{k-1})$ when $j \in [2k-1]$, we have $T \in D_{2k}(S)$.
\end{proof}

\begin{lemma}\label{lm:3mod4}
For any $k\ge 3$ there exist nonisomorphic trees $T_1$ and $T_2$
of size $4k-1$ such that 
$\mathcal{D}_{2k}(T_1)=\mathcal{D}_{2k}(T_2)$.
\end{lemma}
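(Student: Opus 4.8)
The plan is to mimic the construction of Lemma~\ref{lm:0mod4} but adjust the caterpillar sizes so that the total leaf count becomes $4k-1$ while the relevant size-$2k$ deck argument still goes through. The natural candidates are trees of the form $(C_a\oplus C_b)\oplus(C_c\oplus C_d)$ with $a+b+c+d=4k-1$, chosen so that a common ``middle'' tree $S$ is an induced subtree of both and that every size-$2k$ induced subtree of $T_1$ and of $T_2$ is already an induced subtree of $S$. Concretely I would try
\begin{align*}
T_1&=(C_{k+1}\oplus C_{k-1})\oplus(C_k\oplus C_{k-1}),\\
T_2&=(C_{k+1}\oplus C_{k})\oplus(C_{k-1}\oplus C_{k-1}),\\
S&=(C_{k+1}\oplus C_{k-1})\oplus(C_{k-1}\oplus C_{k-1}),
\end{align*}
each of size $4k-1$, and first verify $T_1\neq T_2$ as unordered trees (their root-splits and the multisets of the two ``quarter'' caterpillars differ, so this is a finite check of the possible isomorphisms).

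Next I would establish $\D_{2k}(S)\subseteq\D_{2k}(T_1)$ and $\D_{2k}(S)\subseteq\D_{2k}(T_2)$, which is immediate once one checks that $S$ is an induced subtree of each $T_i$: for $T_1$ we take all leaves of the left half and drop one leaf from a $C_k$ branch to shrink it to $C_{k-1}$; for $T_2$ we drop one leaf from the $C_k$ branch on the right. Then, exactly as in Lemma~\ref{lm:0mod4}, I would prove the reverse containments by case analysis on a tree $T\in\D_{2k}(T_i)$. Either $T$ is one of the two ``halves'' $C_{k+1}\oplus C_{k-1}$ or $C_k\oplus C_{k-1}$ (resp.\ $C_k\oplus C_k$-type pieces for $T_2$), which are visibly in $\D_{2k}(S)$ or induced in $S$; or $T=R_1\oplus R_2$ where $R_i$ is an induced subtree of one half of $T_i$ of some size $j$ and $2k-j$ with $j\in[2k-1]$. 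The key point is the same monotonicity used before: every induced subtree of $C_m$ is a $C_j$ with $j\le m$, and $\D_j(C_k\oplus C_k)\subseteq\D_j(C_{k-1}\oplus C_{k-1})$ whenever $j\le 2k-2$, i.e.\ whenever we are not forced to use all $2k$ leaves of that half — and using all $2k$ leaves of a half is impossible here since each half of $T_i$ has at most $2k+1$ leaves but then the other factor $R_{3-i}$ would be empty. So in all cases $T\in\D_{2k}(S)$.

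The main obstacle is bookkeeping the boundary cases in the case analysis: one must be careful that when $j$ or $2k-j$ equals the full size of a half (which can happen because, e.g., $C_{k+1}\oplus C_{k-1}$ has $2k$ leaves), the decomposition $T=R_1\oplus R_2$ degenerates, and one has to fall back on the ``$T$ is a half'' case or rule it out because the complementary factor would be trivial. I would handle this by noting that in a genuine $\oplus$-decomposition both $R_1$ and $R_2$ are nonempty, forcing $1\le j\le 2k-1$, and then for each half of each $T_i$ checking that $\D_j$ of that half is contained in $\D_j$ of the corresponding half of $S$ for every such $j$; this reduces to the single inclusion $\D_j(C_k\oplus C_k)\subseteq\D_j(C_{k-1}\oplus C_{k-1})$ for $j\le 2k-1$, which fails only at $j=2k$ (excluded) since $C_k\oplus C_k\notin\D_{2k-1}$ and every smaller induced subtree of $C_k\oplus C_k$ is of the form $C_p\oplus C_q$ with $p,q\le k$, $p+q=j\le 2k-1$, hence $\min(p,q)\le k-1$ on each side and the piece already appears in $\D_j(C_{k-1}\oplus C_{k-1})$ after possibly relabeling the two branches. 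Once this inclusion is in hand the proof closes exactly as in Lemma~\ref{lm:0mod4}. The hypothesis $k\ge 3$ is what guarantees $C_{k-1}$ is a nontrivial caterpillar and that $T_1\neq T_2$; I would double-check the small case $k=3$ (size $11$) explicitly to make sure the construction does not collapse.
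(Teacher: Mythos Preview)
Your construction works --- the pair $T_1,T_2$ you propose really does satisfy $\D_{2k}(T_1)=\D_{2k}(T_2)$, and in fact your $T_1$ is exactly the paper's $T_2$ (the paper instead pairs it with $(C_{k+1}\oplus C_k)\oplus(C_k\oplus C_{k-2})$ and uses the intermediate tree $S=(C_{k+1}\oplus C_{k-1})\oplus(C_k\oplus C_{k-2})$). However, the proof sketch has a genuine gap that prevents it from closing as written.

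The issue is the claimed reduction to the inclusion $\D_j(C_k\oplus C_k)\subseteq\D_j(C_{k-1}\oplus C_{k-1})$ for $j\le 2k-1$. First, this is not the inclusion you actually need: the halves that differ between your $T_i$ and $S$ are $C_k\oplus C_{k-1}$ versus $C_{k-1}\oplus C_{k-1}$ (for $T_1$) and $C_{k+1}\oplus C_k$ versus $C_{k+1}\oplus C_{k-1}$ (for $T_2$); neither is $C_k\oplus C_k$. Second, and more seriously, the justification ``$\min(p,q)\le k-1$, hence $C_p\oplus C_q\in\D_j(C_{k-1}\oplus C_{k-1})$'' is wrong: you need \emph{both} $p,q\le k-1$. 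Concretely, $C_k\oplus C_b$ with $2\le b\le k-1$ lies in $\D_{k+b}(C_k\oplus C_{k-1})$ but not in $\D_{k+b}(C_{k-1}\oplus C_{k-1})$, so for $T_1$ the ``same-half'' inclusion you rely on fails whenever $2k-j\ge k+1$, i.e.\ for all $j\le k-1$.

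What rescues the argument is a swap across the two halves of $S$: when $R_2=C_k\oplus C_b$ (so $|R_1|=k-b\le k-2$), realize $R_2$ inside the \emph{left} half $C_{k+1}\oplus C_{k-1}$ of $S$ and $R_1$ inside the right half $C_{k-1}\oplus C_{k-1}$; both fit because $k\le k+1$, $b\le k-1$, and every size-$(k-b)$ subtree of $C_{k+1}\oplus C_{k-1}$ already lives in $C_{k-1}\oplus C_{k-1}$. A parallel (easier) swap inside the single half handles $T_2$: for $j\le 2k-1$ one actually has $\D_j(C_{k+1}\oplus C_k)=\D_j(C_{k+1}\oplus C_{k-1})$, since $C_a\oplus C_k$ with $a\le k-1$ equals $C_k\oplus C_a$ and fits as $(k,a)$ with $k\le k+1$, $a\le k-1$. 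The paper's proof, with its different choice of $S$, needs exactly the same kind of exceptional handling (there the special case is $R_2=C_{k-1}\oplus C_{k-1}$). Once you add this swapping step the proof goes through; without it the range $1\le j\le k-1$ for $T_1$ is a real gap.
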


\begin{proof}
Let $k \ge 3$ and define
\begin{align*}
T_1 & =(C_{k+1}\oplus C_k)\oplus (C_k\oplus C_{k-2}), \\
T_2 & =(C_{k+1}\oplus C_{k-1})\oplus (C_k\oplus C_{k-1}), \text{ and } \\
S & =(C_{k+1}\oplus C_{k-1})\oplus (C_k\oplus C_{k-2}).
\end{align*}
One can see that $T_1 \neq T_2$. We claim that for $\mathcal{D}_{2k}(T_1)=\mathcal{D}_{2k}(S) = \mathcal{D}_{2k}(T_2)$.
Since $S$ is a subtree of $T_i$ for each $i$, we have 
$\D_{2k}(S) \subseteq \D_{2k}(T_i)$. 

Let $T\in\D_{2k}(T_1)$. If
$T$ is one of $C_{k+1}\oplus C_{k-1},C_k\oplus C_k,$ or $
C_2\oplus(C_k\oplus C_{k-2})$, then $T \in \D_{2k}(S)$. Otherwise, $T=R_1\oplus R_2$ for
some $R_1\in\D_{2k-j}(C_{k+1}\oplus C_k)$ and $R_2\in\D_{j}(C_k\oplus C_{k-2})$ for some $j\in[2k-3]$. As in the proof of Lemma~\ref{lm:0mod4}, we have $R_1 \in \D_{2k-j}(C_{k+1}\oplus C_{k-1})$ when $j \in [2k-3]$, so $T \in \mathcal{D}_{2k}(S)$.

Let $T\in\D_{2k}(T_2)$. If $T$ is either of
$C_{k+1}\oplus C_{k-1}$ or $C_1\oplus(C_k\oplus C_{k-1}) = (C_k \oplus C_{k-1}) \oplus C_1$, then $T \in \D_{2k}(S)$. Otherwise, $T=R_1\oplus R_2$ for
some $R_1\in\D_{2k-j}(C_{k+1}\oplus C_{k-1})$ and 
$R_2\in\D_{j}(C_k\oplus C_{k-1})$ for some $j\in[2k-2]$. If $R_2 = (C_{k-1} \oplus C_{k-1})$, then $R_1 = C_2$, and $T = (C_{k-1} \oplus C_{k-1}) \oplus C_2 \in D_{2k}(S)$. Otherwise, $R_2 \in \mathcal{D}_j(C_k \oplus C_{k-2})$ for some $j \in [2k-2]$ and $T \in \mathcal{D}_{2k}(S)$. 
\end{proof}

\begin{lemma}\label{lm:2mod4}
For any $k\ge 2$ there exist nonisomorphic trees $T_1$ and $T_2$ of
size $4k-2$ such that $\mathcal{D}_{2k}(T_1)=\mathcal{D}_{2k}(T_2)$. 
\end{lemma}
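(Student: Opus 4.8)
The plan is to follow the same template used in Lemmas~\ref{lm:0mod4} and~\ref{lm:3mod4}: exhibit two nonisomorphic trees $T_1,T_2$ of size $4k-2$, each built from $B_2$ by grafting caterpillars onto its four leaves, so that both of them ``sandwich'' a common smaller tree $S$ with $\D_{2k}(T_1)=\D_{2k}(S)=\D_{2k}(T_2)$. A natural guess, mimicking the pattern where the total leaf count drops by $2$ from the $4k$ case, is
\begin{align*}
T_1&=(C_{k+1}\oplus C_{k-1})\oplus(C_k\oplus C_{k-2}),\\
T_2&=(C_{k+1}\oplus C_{k-2})\oplus(C_k\oplus C_{k-1}),\\
S&=(C_{k+1}\oplus C_{k-2})\oplus(C_k\oplus C_{k-2}),
\end{align*}
or some close variant with the caterpillar sizes shuffled; the precise assignment should be chosen so that $T_1\neq T_2$ (a quick check on root-splits and on the multiset of the four caterpillar sizes) and so that $S$ is an induced subtree of both $T_1$ and $T_2$. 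The inclusion $\D_{2k}(S)\subseteq\D_{2k}(T_i)$ is then immediate, exactly as before, so the content is the reverse inclusion.

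For the reverse inclusion I would argue as in the earlier lemmas. Take $T\in\D_{2k}(T_1)$. Since $T_1$ has a root-split coming from its two halves $A_1=C_{k+1}\oplus C_{k-1}$ (size $2k$) and $A_2=C_k\oplus C_{k-2}$ (size $2k-2$), any $2k$-leaf induced subtree $T$ is either one of the ``degenerate'' pieces obtained by landing entirely in one half (e.g.\ $T=A_1$, $T=C_k\oplus C_k$ as an induced subtree inside $A_1$... actually $A_1$ has only $2k$ leaves so $T=A_1$, and from $A_2$ with an extra pendant, $T=C_2\oplus(C_k\oplus C_{k-2})$, etc.), all of which one checks by hand lie in $\D_{2k}(S)$, or else $T=R_1\oplus R_2$ with $R_1\in\D_{2k-j}(A_1)$ and $R_2\in\D_j(A_2)$ for an appropriate range of $j$. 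In the generic case, using that every induced subtree of a caterpillar is a caterpillar and that $\D_{2k-j}(C_{k+1}\oplus C_{k-1})\subseteq\D_{2k-j}(C_{k+1}\oplus C_{k-2})$ (shrinking the second caterpillar), one transports $R_1$ into a decomposition of $S$, giving $T\in\D_{2k}(S)$. The symmetric argument starting from $T\in\D_{2k}(T_2)$ works the same way, this time shrinking $C_{k-1}$ to $C_{k-2}$ in the other half, with a few genuinely exceptional subcases (where $R_i$ is a balanced split like $C_{k-1}\oplus C_{k-1}$ and its partner is forced to be $C_2$) that have to be listed and checked individually against $\D_{2k}(S)$.

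The main obstacle is bookkeeping rather than conceptual: one must pin down caterpillar sizes so that (i) $T_1\not\cong T_2$, (ii) $S$ embeds as an induced subtree in both, and (iii) the reverse-inclusion case analysis actually closes — in particular the ``degenerate'' induced subtrees (those that use fewer than all four grafting points, or land a whole $2k$-block in a single half) must all be verified to appear in $\D_{2k}(S)$, and the special balanced-split subcases must not produce a tree missing from $\D_{2k}(S)$. Because $4k-2\equiv 2\pmod 4$ forces the two halves of $B_2$ to have unequal sizes ($2k$ and $2k-2$), the asymmetry is slightly more delicate than in the $0\bmod 4$ case, so I expect to spend the most effort making sure the index range for $j$ and the list of exceptions are exactly right; once the sizes are correctly calibrated, each verification is a one-line appeal to the caterpillar-induced-subtree fact together with a caterpillar-shrinking containment of decks, just as in Lemmas~\ref{lm:0mod4} and~\ref{lm:3mod4}.
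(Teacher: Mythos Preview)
Your plan follows the right template, but the concrete choices you make break the argument. First, your trees use $C_{k-2}$, so the construction is undefined for $k=2$; the lemma must cover $k\ge 2$. Second, your claim that ``$4k-2\equiv 2\pmod 4$ forces the two halves of $B_2$ to have unequal sizes $(2k$ and $2k-2)$'' is simply false: one can (and the paper does) use halves of size $2k-1$ each. Third, and most seriously, your key ``shrinking'' containment $\D_{2k-j}(C_{k+1}\oplus C_{k-1})\subseteq\D_{2k-j}(C_{k+1}\oplus C_{k-2})$ is in the wrong direction: shrinking a tree shrinks its decks, not enlarges them. Concretely, for $j=2$ the tree $C_{k-1}\oplus C_{k-1}$ lies in $\D_{2k-2}(C_{k+1}\oplus C_{k-1})$ but not in $\D_{2k-2}(C_{k+1}\oplus C_{k-2})$, and this propagates: $(C_{k-1}\oplus C_{k-1})\oplus C_2\in\D_{2k}(T_1)$ but one checks it is \emph{not} in $\D_{2k}(S)$ for your $S$, so the sandwich $\D_{2k}(T_1)=\D_{2k}(S)$ fails outright.

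The paper avoids all of this by choosing caterpillar sizes $\{k,k,k-1,k-1\}$ rather than $\{k+1,k,k-1,k-2\}$: take $T_1=(C_k\oplus C_{k-1})\oplus(C_k\oplus C_{k-1})$, $T_2=(C_k\oplus C_k)\oplus(C_{k-1}\oplus C_{k-1})$, and $S=(C_k\oplus C_{k-1})\oplus(C_{k-1}\oplus C_{k-1})$. This works for all $k\ge2$, both halves of $T_1$ have the \emph{same} size $2k-1$, and the reverse inclusions need only the correct-direction fact that $\D_j(C_k\oplus C_k)=\D_j(C_k\oplus C_{k-1})$ for $j\le 2k-1$ (which the earlier lemmas already use). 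You should either switch to these sizes or, if you insist on your multiset, find a different common subtree $S$ and redo the case analysis from scratch, handling $k=2$ separately.
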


\begin{proof}
Let $k\ge 2$ and define 
\begin{align*}
T_1 & =(C_k\oplus C_{k-1})\oplus (C_k \oplus C_{k-1}), \\
T_2 & =(C_k\oplus C_k)\oplus (C_{k-1}\oplus C_{k-1}), \text{ and } \\
S & =(C_{k}\oplus C_{k-1})\oplus (C_{k-1}\oplus C_{k-1}).
\end{align*}
One can see $T_1 \neq T_2$. We claim that $\mathcal{D}_{2k}(T_1)=\mathcal{D}_{2k}(S)=\mathcal{D}_{2k}(T_2).$ Since $S$ is a subtree of $T_i$ for each $i$, we have $\mathcal{D}_{2k}(S)\subseteq \mathcal{D}_{2k}(T_i)$.

For the other containment, consider $T\in \mathcal{D}_{2k}(T_1)\cup\D_{2k}(T_2)$.
Then we can have $T = C_k\oplus C_k = C_k \oplus (C_1 \oplus C_{k-1}) \in \mathcal{D}_{2k}(S)$. Otherwise, one of the following holds: $T = C_k \oplus R$ for some $R\in \D_k(C_{k-1}\oplus C_{k-1})$, $T = R_1 \oplus R_2$ with $R_i \in \D_k(C_{k-1}\oplus C_{k-1})$ for each $i$, or $T = Q_1 \oplus Q_2$ where $Q_1\in\D_j(C_{k-1}\oplus C_{k-1})$ and $Q_2\in\D_{2k-j} (C_k\oplus C_{k-1})$ for some $j \in [k-1]$. In each case, $T\in\D_{2k}(S)$.
\end{proof}

\begin{lemma}\label{lm:1mod4}
For $k\ge 2$ there exist nonisomorphic trees $T_1$ and $T_2$ of size $4k-3$
such that 
$\mathcal{D}_{2k}(T_1)=\mathcal{D}_{2k}(T_2)$.
\end{lemma}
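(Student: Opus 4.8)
The plan is to mimic exactly the three-tree strategy used in Lemmas~\ref{lm:0mod4}, \ref{lm:3mod4}, and \ref{lm:2mod4}: produce two nonisomorphic trees $T_1,T_2$ of size $4k-3$ built from $B_2$ with caterpillars hung on the four leaves, together with a common ``witness'' tree $S$ which is an induced subtree of both, and show $\mathcal{D}_{2k}(T_1)=\mathcal{D}_{2k}(S)=\mathcal{D}_{2k}(T_2)$. Since $4k-3 = (k)+(k)+(k-1)+(k-2)$ and also $= (k+1)+(k-1)+(k-1)+(k-2)$, natural candidates are
\begin{align*}
T_1 &= (C_k\oplus C_k)\oplus(C_k\oplus C_{k-3}),\\
T_2 &= (C_{k+1}\oplus C_{k-1})\oplus(C_k\oplus C_{k-3}),\\
S   &= (C_{k+1}\oplus C_{k-1})\oplus(C_k\oplus C_{k-4})
\end{align*}
or some close variant; I would first experiment (by hand, or following the pattern of the preceding lemmas, which all swap a ``$\pm1$'' between one side's subtrees) to find a pair $T_1,T_2$ that is genuinely nonisomorphic for all $k\ge 2$ and whose ``off-balance'' parts differ only in a way that is invisible at size $2k$. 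Note $k\ge 2$ forces some $C_{k-3}$ or similar to possibly be trivial or empty, so the first step is to pin down the right small-index caterpillar sizes (possibly allowing a term like $C_1$ or adjusting the split) and to double check the $k=2$ and $k=3$ base cases separately, exactly as the earlier lemmas handle small $k$ by listing the finitely many exceptional induced subtrees explicitly.

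Once the three trees are fixed, the structure of the proof is forced. The easy inclusion is that $S$ is an induced subtree of each $T_i$ (obtained by deleting appropriate leaves from the caterpillar arms), hence $\mathcal{D}_{2k}(S)\subseteq\mathcal{D}_{2k}(T_i)$. For the reverse inclusions I would take an arbitrary $T\in\mathcal{D}_{2k}(T_i)$ and case-split on where the $2k$ chosen leaves fall relative to the root-split of $T_i$. Using the fact (quoted in the paragraph before Lemma~\ref{lm:0mod4}) that every induced subtree of $C_n$ is some $C_m$ with $m\le n$, every induced subtree of $C_a\oplus C_b$ of size $\le a+b$ is either a single caterpillar $C_m$ (when all chosen leaves lie in one arm) or of the form $C_{m_1}\oplus C_{m_2}$ with $m_1\le a$, $m_2\le b$. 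So $T$ is either a caterpillar, or $C_a\oplus C_b$ for one of the two ``halves,'' or $R_1\oplus R_2$ with $R_1$ an induced subtree of one half and $R_2$ of the other, with $|R_1|+|R_2|=2k$. In each case I would verify $T$ is realizable inside $S$, using the key monotonicity trick from Lemma~\ref{lm:0mod4}: if the index ranges force $|R_i|\le$ (the relevant caterpillar size in $S$), then the same subtree already appears in $\mathcal{D}_{2k}(S)$.

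The main obstacle, as in Lemma~\ref{lm:3mod4}, will be the handful of ``boundary'' cases where the naive monotonicity argument fails because $T$ wants a caterpillar arm in $S$ that is one leaf too short — e.g.\ a piece of the form $C_j\oplus C_j$ (two equal arms of maximal length) that can be carved out of $T_1$ or $T_2$ but, a priori, not out of $S$ if $S$ has one arm shortened. There I would have to exhibit that specific tree directly as an induced subtree of $S$ using a \emph{different} decomposition of $S$'s leaves across its root-split (exactly the ``$R_2=(C_{k-1}\oplus C_{k-1})\Rightarrow R_1=C_2$'' maneuver in Lemma~\ref{lm:3mod4}), and to check there are only finitely many such exceptional shapes, all of size $2k$, and that each is accounted for. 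A secondary, more bookkeeping-heavy obstacle is simply making sure $T_1\not\cong T_2$ for every $k\ge 2$: since both are $B_2$-shaped with caterpillar arms, one compares the two multisets of arm-lengths $\{k,k,k,k-3\}$ versus $\{k+1,k-1,k,k-3\}$ grouped by the root-split $B_2$-structure, and confirms they cannot coincide under the symmetries of $B_2$ — this is immediate but must be stated. I do not expect any step to require more than the techniques already deployed in the three preceding lemmas; the work is in choosing the correct arm lengths so that all the case checks go through cleanly.
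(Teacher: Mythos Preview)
Your overall strategy is exactly the paper's: build two $B_2$-shaped trees with caterpillar arms summing to $4k-3$, find a common induced subtree $S$, and show both size-$2k$ decks collapse to $\mathcal{D}_{2k}(S)$. However, the specific trees you propose do not work, and this is not just a matter of small-$k$ bookkeeping. Your $S=(C_{k+1}\oplus C_{k-1})\oplus(C_k\oplus C_{k-4})$ is \emph{not} an induced subtree of your $T_1=(C_k\oplus C_k)\oplus(C_k\oplus C_{k-3})$: the left half of $S$ is $C_{k+1}\oplus C_{k-1}$, which has $2k$ leaves, and the only size-$2k$ induced subtree of $C_k\oplus C_k$ is $C_k\oplus C_k$ itself. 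So the easy inclusion $\mathcal{D}_{2k}(S)\subseteq\mathcal{D}_{2k}(T_1)$ already fails, and no amount of case-checking on the reverse inclusion can rescue it. Your partition $(k,k,k,k-3)$ also forces $C_{k-3}$ and $C_{k-4}$ into the construction, which are undefined for $k\le 3$ and $k\le 4$ respectively.

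The paper uses the more balanced partition $4k-3=k+k+(k-1)+(k-2)$ and, for $k\ge 3$, takes
\[
T_1=(C_k\oplus C_k)\oplus(C_{k-1}\oplus C_{k-2}),\quad
T_2=(C_k\oplus C_{k-1})\oplus(C_k\oplus C_{k-2}),\quad
S=(C_k\oplus C_{k-1})\oplus(C_{k-1}\oplus C_{k-2}),
\]
where $S$ is visibly obtained from each $T_i$ by deleting one leaf from a single arm. The case $k=2$ is handled separately with $T_1=(C_2\oplus C_2)\oplus C_1$ and $T_2=C_3\oplus C_2$. The boundary case you anticipate does occur (the shape $C_k\oplus C_k$ inside $T_1$ must be realized in $S$ as $C_k\oplus(C_1\oplus C_{k-1})$), and the $T_2$ analysis splits on whether $j\le k$ or $j\ge k+1$ --- exactly the maneuver you describe, but with arm lengths that actually make it go through.
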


\begin{proof}
If $k=2$, let $T_1=(C_2\oplus C_2)\oplus C_1$ and $T_1=C_3\oplus C_2$. 
Then $T_1 \neq T_2$, but
$\D_4(T_1)=\D_4(T_2)=\{C_2\oplus C_2,C_4\}$.

Let $k\ge 3$ and define 
\begin{align*}
T_1 & =(C_{k}\oplus C_{k})\oplus (C_{k-1}\oplus C_{k-2}) , \\ 
T_2 & =(C_{k}\oplus C_{k-1})\oplus (C_{k}\oplus C_{k-2}) , \text{ and } \\
S & =(C_{k}\oplus C_{k-1})\oplus (C_{k-1}\oplus C_{k-2})
\end{align*}
One can check $T_1 \neq T_2$. Since $S$ is a subtree of each $T_i$, $\D_{2k}(S)\subseteq \D_{2k}(T_i)$ for each $i$. 
We will show that 
$\D_{2k}(T_1)=\D_{2k}(S)= \D_{2k}(T_2).$

Let $T\in\D_{2k}(T_1)$. We can have $T = C_k \oplus C_k = C_k \oplus (C_{1} \oplus C_{k-1}) \in \D_{2k}(S)$. Otherwise $T = R_1 \oplus R_2$ for some $R_1 \in\D_{2k-j}(C_{k}\oplus C_{k})$ and $R_2 \in  \D_j(C_{k-1}\oplus C_{k-2})$ for some $j \in [2k-3]$. Since $\D_{2k-j}(C_{k}\oplus C_{k})=\D_{2k-j}(C_k\oplus C_{k-1})$ for $j \in [2k-3]$, we have $T \in \D_{2k}(S)$. 

Let $T \in \D_{2k}(T_2)$. Then $T = R_1 \oplus R_2$ where $R_1 \in D_{2k-j}(C_k \oplus C_{k-1})$ and $R_{2} \in D_{j}(C_{k} \oplus C_{k-2})$ for some $j \in [2k-2]$. If $1 \le j \le k$, then $R_2 \in \D_{j}(C_{k-1} \oplus C_{k-2})$, so $T \in \D_{2k}(S)$. If instead $k+1 \le j \le 2k-2$, we have $2k-j\le k-1$ and so $R_1 \in \D_{2k-j}(C_{k-1} \oplus C_{k-2})$ and $R_2 \in D_{j}(C_k \oplus C_{k-1})$. Hence $T \in D_{2k}(S)$. 
\end{proof}

We turn to finding a lower bound on the multideck size required for reconstruction. The number of rooted $n$-leaf binary trees is the Wedderburn-Etherington number $W_n$
described in~\cite[A001190]{oeis}. By~\cite{number_of_trees}, $W_n \sim \alpha \cdot \frac{b^n}{n^{3/2}}$ for some positive constants $\alpha, b$. In particular, 
\begin{equation} \label{eqn:Wnboundsb}
    C_1 \cdot \frac{b^n}{n^{3/2}}\leq W_n \leq C_2 \cdot \frac{b^n}{n^{3/2}}
\end{equation} for some positive constants $C_1$ and $C_2$, for every $n\geq 1$. This formula will be recalled throughout the paper, particularly in Section~\ref{subsec:many subtrees}. Numerically, $b \approx 2.4832$. The following lemma compares $W_i$ to $\binom{n}{i}$. 

\begin{lemma} \label{lem:Wkbounds}
If $n$ is sufficiently large, then there exists a positive integer $j_0$, such that for all $1\leq i\leq j_0$,
 $W_i\leq \binom{n}{i} $ and for all $j_0\leq i\leq n$, $W_i\geq \binom{n}{i} $, and furthermore,
 $ 0.685n\leq j_0\leq 0.6851n$. 
\end{lemma}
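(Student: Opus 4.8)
The plan is to compare $W_i$ and $\binom{n}{i}$ by taking logarithms and exploiting the known asymptotics $W_i \sim \alpha \cdot b^i / i^{3/2}$ together with Stirling's approximation for $\binom{n}{i}$. First I would observe that for fixed $n$, as $i$ runs from $1$ to $n$, the function $i \mapsto \log W_i - \log \binom{n}{i}$ should be increasing (or at least that the sign changes exactly once), so that a threshold $j_0$ is well defined; this monotonicity is plausible because $\log W_i$ grows roughly linearly in $i$ with slope $\log b$, whereas $\log\binom{n}{i}$ is concave in $i$ with slope $\log((n-i)/i)$, which is large and positive for small $i$ and becomes negative past $i = n/2$. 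To make this rigorous I would look at the ratio $\frac{W_{i+1}}{W_i} \cdot \frac{\binom{n}{i}}{\binom{n}{i+1}}$ and show it exceeds $1$ for all relevant $i$, using \eqref{eqn:Wnboundsb} to sandwich $W_{i+1}/W_i$ between constant multiples of $b$ and noting $\binom{n}{i}/\binom{n}{i+1} = (i+1)/(n-i)$.

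Next I would pin down the location of the crossover. Using $\log W_i = i \log b - \tfrac32 \log i + O(1)$ and Stirling in the form $\log \binom{n}{i} = n H(i/n) + O(\log n)$, where $H(x) = -x\log x - (1-x)\log(1-x)$ is the binary entropy (natural log), the equation $W_i = \binom{n}{i}$ becomes, writing $i = x n$,
\begin{equation*}
x n \log b + O(\log n) = n H(x) + O(\log n),
\end{equation*}
so to leading order $x$ solves $H(x) = x \log b$. I would then solve this transcendental equation numerically: with $b \approx 2.4832$, so $\log b \approx 0.9095$, one finds the positive root $x^\ast$ of $-x\log x - (1-x)\log(1-x) = x \log b$ lies around $0.685$. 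The claimed bounds $0.685 n \le j_0 \le 0.6851 n$ then follow by checking that $H(x) - x\log b$ is (say) negative at $x = 0.6851$ and positive at $x = 0.685$ with enough margin to absorb the $O(\log n)$ error terms once $n$ is sufficiently large — i.e. the derivative of $H(x) - x \log b$ at $x^\ast$ is bounded away from zero, so a window of width $10^{-4}$ around $x^\ast$ dominates an error of size $O((\log n)/n)$.

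I would present the argument in three steps: (1) establish that $g(i) := W_i / \binom{n}{i}$ is (eventually, for $n$ large) unimodal-to-the-point of having a single sign change of $g(i) - 1$, giving existence and uniqueness of $j_0$; (2) use the crude bounds \eqref{eqn:Wnboundsb} plus Stirling to show $g(\lceil 0.685 n\rceil) < 1 < $ wait—carefully, $g$ increasing means $W_i \le \binom{n}{i}$ for small $i$, so I want $g(0.685n) \le 1$, i.e. $j_0 \ge 0.685 n$, and $g(0.6851 n) \ge 1$, i.e. $j_0 \le 0.6851 n$; (3) verify these two numerical inequalities by plugging into $n[H(x) - x\log b] + O(\log n)$ and checking the bracket is bounded away from $0$ by a fixed constant at $x = 0.685$ and $x = 0.6851$.

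The main obstacle is making the error analysis honest: the $O(1)$ in $\log W_i$ and the $O(\log n)$ in Stirling's formula must be controlled by explicit constants (from \eqref{eqn:Wnboundsb} one gets $|\log W_i - i \log b + \tfrac32 \log i| \le \log(C_2/C_1)$, which is fine; Stirling contributes a term like $\tfrac12 \log(2\pi i(n-i)/n) \le \log n$), so that the difference $n[H(i/n) - (i/n)\log b]$, which grows linearly in $n$ for $i/n$ a fixed distance from $x^\ast$, genuinely swamps the error for $n$ large. The other delicate point is ensuring the sign change is unique rather than merely present; if the clean monotonicity of $g$ is awkward to prove for all $i$ simultaneously, a fallback is to prove it only on the range $i \in [0.5n, 0.8n]$ (where the crossover must lie, since $\binom{n}{i}$ is decreasing there while $W_i$ is increasing) and separately note $W_i < \binom{n}{i}$ for all $i \le 0.5 n$ and $W_i > \binom{n}{i}$ for all $i \ge 0.8 n$ via the leading-order comparison, which suffices to locate a unique $j_0$ with the stated bounds.
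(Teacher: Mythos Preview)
Your primary plan---showing that $i \mapsto \log W_i - \log\binom{n}{i}$ is globally increasing---fails: for small $i$ (say $i = o(n)$) the increment $\log\binom{n}{i+1} - \log\binom{n}{i} = \log\frac{n-i}{i+1}$ is of order $\log n$, while $\log W_{i+1} - \log W_i$ is bounded (roughly $\log b$), so the difference is \emph{decreasing} there. The function is U-shaped, not monotone, and the ratio test you sketch cannot rescue this, since sandwiching $W_{i+1}/W_i$ between constant multiples of $b$ gives nothing useful when $(i+1)/(n-i)$ is tiny.

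Your fallback, however, is exactly the paper's argument and is correct. The paper first shows $W_i \le \binom{n}{i}$ directly for all $i \le n/2$ (via $b^i(i/e)^i < (n-i)^i$ combined with Stirling), then observes that on $[n/2,\, n]$ the sequence $\binom{n}{i}$ is decreasing while $W_i$ is increasing, forcing a unique switchover there; finally it locates the switch in $[0.685n,\, 0.6851n]$ using the entropy bound $\frac{1}{n+1}(c^{-c}(1-c)^{c-1})^n \le \binom{n}{cn} \le (c^{-c}(1-c)^{c-1})^n$ and a numerical comparison of $b^c$ with $c^{-c}(1-c)^{c-1} = e^{H(c)}$, exactly the comparison $H(x)$ versus $x\log b$ that you identify. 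So drop the global-monotonicity step entirely and lead with the fallback.
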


\begin{proof}
Let $C_1, C_2$, and $b$ be as in Equation~\eqref{eqn:Wnboundsb}. 

First we consider the case when $1\leq i \le n/2$. We  find $i(1+\frac{b}{e})<2i\leq n$, so $\frac{bi}{e}<n-i$. Thus $b^i\bigl(\frac{i}{e}\bigl)^i<(n-i)^i$. 
By Stirling's formula, there exists a number $i_0$, independent of $n$, such that for all $i \ge i_0$, we have $C_2 i! < i^{3/2} (\frac{i}{e})^i$.
Hence, for all $i$ with $i_0\leq i\leq \frac{n}{2}$  and  $n$ sufficiently large, \[W_i\leq C_2\frac{b^i}{i^{3/2}} < C_2\frac{(n-i)^i}{i^{3/2} (i/e)^i} < \frac{(n-i)^i}{i!} \leq \binom{n}{i}.\]  
On the other hand, since the sequence of $W_i$ is increasing, for $1 \le i \le i_0$, we can bound $W_i$ from above by the constant $W_{i_0}$. 
We conclude that for every sufficiently large $n$,  $W_i < \binom{n}{i}$ for all $1\leq i \le n/2$.

On the interval $\frac{n}{2}\leq i \leq n$, we have that the sequence $\binom{n}{i}$ is decreasing and the sequence $W_i$ is increasing. Therefore the inequality between them switches exactly once in the range $n/2 \le i \le n$. 
Recall from e.g. formulas (11.42) and (11.50) in~\cite{Cover-Thomas} that for any real number $0<c<1$, such that $cn$ is an integer, the following explicit bounds hold:
\[ \frac{1}{n+1} \left(c^{-c}(1-c)^{c-1}\right)^n \leq    \binom{n}{cn}\leq \left(c^{-c}(1-c)^{c-1}\right)^n.\]

One can check computationally, e.g. with MAPLE, that for $0.5<c\leq 0.685$, we have $b^c < c^{-c}(1-c)^{c-1}$. Thus, for $i = cn$ with $0.5n \le i  \le 0.685n$, there is an $\epsilon > 0$ for which 
\[\binom{n}{i}\ge \frac{1}{n+1}\left[ \left(\frac{i}{n}\right)^{-\frac{i}{n}} \left(1-\frac{i}{n}\right)^{\frac{i}{n}-1} \right]^n \ge \frac{1}{n+1} \left((b+\epsilon)^\frac{i}{n}\right)^n\ge \frac{1}{2i+1}(b+\epsilon)^i \ge C_2 \frac{b^i}{i^{3/2}} \ge W_i\]
for $n$ sufficiently large.

Similarly one can check computationally that for $c\geq 0.6851$, we have $b^c > c^{-c}(1-c)^{c-1}$. Thus, for $i = cn$ with $i \ge 0.6851n$, there exists an $\epsilon >0$ such that 
\[\binom{n}{i}\le  \left[ \left(\frac{i}{n}\right)^{-\frac{i}{n}} \left(1-\frac{i}{n}\right)^{\frac{i}{n}-1}  \right]^n \le \left((b-\epsilon)^\frac{i}{n}\right)^n=(b-\epsilon)^i\le  C_1 \frac{b^i}{i^{3/2}} \le W_i\]
for $n$ sufficiently large.
\end{proof}

We use the $\log$ notation for the natural logarithm.  
\begin{theorem} \label{thm:multilow}
We have $R^{(m)}(n)\geq (1-o(1))\frac{\log n}{\log b}$ with $b$ as in Equation~\eqref{eqn:Wnboundsb}.
\end{theorem}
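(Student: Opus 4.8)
The plan is a counting/pigeonhole argument carried out at the level of logarithms: the size-$j$ multideck of a size-$n$ tree can assume only a limited number of distinct values, and once that number falls below $W_n$, two non-isomorphic $n$-leaf trees must have the same size-$j$ multideck, which forces $R^{(m)}(n)>j$.

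First I would bound the number of possible size-$j$ multidecks of size-$n$ trees. Every size-$j$ induced subtree of an $n$-leaf tree is one of the $W_j$ size-$j$ binary trees, and each occurs with multiplicity at most $\binom nj$ (the multiplicities actually sum to $\binom nj$, but I will not need this), so a size-$j$ multideck is encoded by a vector in $\{0,1,\dots,\binom nj\}^{W_j}$. Hence there are at most $\bigl(\binom nj+1\bigr)^{W_j}$ of them. Consequently, if $\bigl(\binom nj+1\bigr)^{W_j}<W_n$, then by pigeonhole there exist non-isomorphic size-$n$ trees $T_1,T_2$ with $\Dm_j(T_1)=\Dm_j(T_2)$, so the size-$j$ multideck does not determine the size-$n$ trees, and therefore $R^{(m)}(n)>j$.

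Next I would take logarithms and insert the bounds from Equation~\eqref{eqn:Wnboundsb}. On the right, $\log W_n\ge \log C_1+n\log b-\tfrac32\log n\ge\tfrac12 n\log b$ for $n$ large. On the left, using $\binom nj+1\le n^{j+1}$ (valid for $n\ge 2$) and $W_j\le C_2 b^j$, one gets $W_j\log\bigl(\binom nj+1\bigr)\le C_2 b^j(j+1)\log n$. So it suffices to choose $j$ with $C_2 b^j(j+1)\log n<\tfrac12 n\log b$. Fixing $\epsilon\in(0,1)$ and setting $j=j(n)=\bigl\lfloor(1-\epsilon)\tfrac{\log n}{\log b}\bigr\rfloor$, one has $b^{j}\le n^{1-\epsilon}$ while $(j+1)\log n=O(\log^2 n)$, so the left-hand side is $O(n^{1-\epsilon}\log^2 n)=o(n)$ and the desired inequality holds for all $n$ larger than some threshold $N(\epsilon)$. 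For such $n$ we obtain $R^{(m)}(n)>j(n)$, hence $R^{(m)}(n)\ge j(n)+1\ge(1-\epsilon)\tfrac{\log n}{\log b}$; since $\epsilon$ is arbitrary, this gives $R^{(m)}(n)\ge(1-o(1))\tfrac{\log n}{\log b}$.

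I do not expect a serious obstacle: this is an entropy-type lower bound, and the only care needed is in the order of quantifiers (first fix $\epsilon$, then the threshold $N(\epsilon)$) so that the final $(1-o(1))$ is legitimate, together with the use of only the crude estimates $\binom nj+1\le n^{j+1}$ and $W_j\le C_2 b^j$ furnished by Equation~\eqref{eqn:Wnboundsb}. The constant $\tfrac12$ and the polynomial factors are not tight, but sharpening them is unnecessary for this asymptotic statement.
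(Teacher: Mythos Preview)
Your argument is correct and rests on the same pigeonhole idea as the paper: bound the number of possible size-$j$ multidecks of $n$-leaf trees and compare it with $W_n$. The execution differs slightly. The paper uses the exact composition count $\binom{\binom{n}{k}+W_k-1}{W_k-1}$ (exploiting that the multiplicities sum to $\binom{n}{k}$), then splits into the cases $W_k\gtrless\binom{n}{k}$ and invokes Lemma~\ref{lem:Wkbounds} to dispose of the first case before taking logarithms twice. You instead take the cruder bound $\bigl(\binom{n}{j}+1\bigr)^{W_j}$, choose $j=\lfloor(1-\epsilon)\log n/\log b\rfloor$ directly, and verify the inequality; this is more elementary and self-contained, since it sidesteps both the case split and Lemma~\ref{lem:Wkbounds}. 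The price is that your bound on the number of multidecks is looser, but as you observe this costs nothing at the level of the $(1-o(1))$ conclusion.
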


\begin{proof}
Let $\mathcal{T}_n$ denote the set of $n$-leaf rooted binary trees. Assume that $R^{(m)}(n)=k$.
For reconstruction from multidecks $\{\mathcal{D}_k^m(T)\}_{T\in \mathcal{T}_n}$, there must be at least as many compositions of the integer $\binom{n}{k}$ into $W_k$ terms as there are members of $\mathcal{T}_n$. Since $|\mathcal{T}_n| = W_n$, using the formula for the number of compositions, the inequality \begin{equation*}\binom{\binom{n}{k} +W_k -1}{W_k-1}\geq W_n \end{equation*} is necessary for reconstruction. As the claim of the theorem is asymptotic, we assume that $n$ is sufficiently large. We prove the theorem in two cases.

First suppose $W_k\geq\binom{n}{k}$. By Lemma~\ref{lem:Wkbounds}, for sufficiently large $n$ we have $W_k\geq 0.685n$, which is stronger than what we need.

Now suppose that $W_k\leq\binom{n}{k}$. We can assume $k\geq 4$, as otherwise reconstruction is not possible. We obtain \begin{equation*} W_n\leq \binom{\binom{n}{k} +W_k -1}{W_k-1}\leq \binom{2\binom{n}{k}}{W_k-1}\leq \binom{2\binom{n}{k}}{W_k} \leq n^{kW_k}.
\end{equation*}
Note that in the inequality, for large $n$, $k$ cannot be bounded by a constant.
 If $k$ were bounded, then $W_n \in O(n^{const})$, contradicting Equation~\ref{eqn:Wnboundsb}. 
Taking the logarithm of $C_1 \frac{b^n}{n^{3/2}} \le W_n \le n^{k W_k}$, we get
\begin{equation*}
n\log b - \frac{3}{2}\log n +\log C_1\leq \log W_n \leq kW_{k} \log n. 
\end{equation*}
Using Equation~\eqref{eqn:Wnboundsb} and taking the logarithm again gives
\begin{equation*}
\log n  + O(1)\leq \log\log W_n \leq \log k+\log\log n +\log C_2 + k\log b-\frac{3}{2}\log k. \end{equation*} Now reordering gives \[k\geq \frac{\log n  +\frac{1}{2}\log k-\log\log n  -O(1)}{\log b} \ge (1-o(1)) \frac{\log n}{\log b}. \qedhere \] 
\end{proof}

\subsection{Upper bound}

We now consider an upper bound on the deck and multideck size required for reconstruction. First we establish some lemmas relating the composition of a binary tree to the structure of the subtrees in its deck.

\begin{lemma}\label{lm:largedecomp}
    Suppose $T = T_1 \oplus T_2$ where $|T_i| = n_i$, $i \in \{1,2\}$, and $2 \le n_1 \le n_2$. Let $D_i =\{ T_i\oplus T^{\star}: T^{\star}\in \mathcal{D}(T_{3-i})\}$. Then 
    \begin{enumerate}
    \item $\mathcal{D}(T)=D_1\cup D_2$. 
    \item $D_1\cap D_2\ne\emptyset$ if and only if $T_1 = T_2$ if and only if $D_1=D_2$.
    \item If $n_1=2$ and $n_2\ge 3$, then $D_2 = \{C_1 \oplus T_2\}$ and $C_1 \oplus T_2$ is the unique tree in $\mathcal{D}(T)$ of the form $C_1\oplus T^{\star}$.
    \end{enumerate}
\end{lemma}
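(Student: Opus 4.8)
The plan is to analyze directly what a size-$(n-1)$ induced subtree of $T = T_1 \oplus T_2$ looks like. Any set $S$ of $n-1$ leaves of $T$ omits exactly one leaf $v$, and $v$ lies in exactly one of $T_1, T_2$, say $T_i$; then $T[S] = T_i \oplus (T_{3-i}[\mathbb{L}(T_{3-i})\setminus\{v'\}])$ where $v'$ is the omitted leaf, *provided the root of $T$ survives as a degree-2 vertex* — which it does as long as $T_{3-i}$ still has at least one leaf, i.e. $n_{3-i}\ge 1$, which always holds. Here I should be slightly careful: if $n_i = 1$, deleting the single leaf of $T_i$ collapses things, but the hypothesis $n_1\ge 2$ rules this out, so deleting any leaf leaves both sides nonempty and the root is genuinely a branch vertex. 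This gives $\mathcal{D}(T) = D_1\cup D_2$, which is part (1). I would write this as: $T[S]\in D_i$ when the omitted leaf is in $T_{3-i}$, and conversely every element of $D_i$ arises this way.

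For part (2), the key observation is that a tree of the form $T_i\oplus T^\star$ with $|T^\star| = n_{3-i}-1$ records its root-split as $\{n_i, n_{3-i}-1\}$ (or a single value if these coincide), and more importantly determines the *pair* $\{T_i, T^\star\}$ of tree shapes hanging off the root — here I use the standard fact that the two subtrees rooted at the children of the root of a rooted binary tree are well-defined up to swapping. So if $X\in D_1\cap D_2$, then writing $X = T_1\oplus A = T_2\oplus B$ with $|A| = n_2-1$ and $|B| = n_1-1$, the unordered pair $\{T_1, A\} = \{T_2, B\}$. Since $|T_1| = n_1\le n_2$ and $|A| = n_2 - 1 < n_2$, and $|T_2| = n_2$ while $|B| = n_1-1 < n_1 \le n_2$, comparing leaf counts: $T_2$ has $n_2$ leaves, and the only member of $\{T_1,A\}$ with as many as $\min(n_1,n_2-1)$... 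I would instead argue cleanly via sizes: in $\{T_1, A\}$ the sizes are $\{n_1, n_2-1\}$ and in $\{T_2, B\}$ they are $\{n_2, n_1-1\}$; for these multisets to be equal we need (since $n_1\le n_2$) either $n_1 = n_2$ and $n_2 - 1 = n_1 - 1$, or $n_1 = n_1 - 1$ (impossible). So $D_1\cap D_2\ne\emptyset$ forces $n_1 = n_2$, and then from $\{T_1,A\} = \{T_2,B\}$ with $|T_1| = |T_2| = n_1 > |A| = |B|$ we get $T_1 = T_2$. The converse ($T_1 = T_2 \Rightarrow D_1 = D_2$) is immediate from the definitions since then $\mathcal{D}(T_{3-i}) = \mathcal{D}(T_i)$, giving the chain of equivalences.

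For part (3), assume $n_1 = 2$, so $T_1 = C_1\oplus C_1 = C_2$, and $n_2\ge 3$. Then $\mathcal{D}(T_1) = \mathcal{D}_1(C_2) = \{C_1\}$, so $D_2 = \{C_1\oplus T_2\}$ as claimed. It remains to see $C_1\oplus T_2$ is the *unique* element of $\mathcal{D}(T)$ of the form $C_1\oplus T^\star$. By part (1), any other such element lies in $D_1 = \{C_2\oplus T^\star : T^\star\in\mathcal{D}(T_2)\}$, so we'd need $C_2\oplus T^\star = C_1\oplus T^{\star\star}$; comparing the root-subtree pairs, $\{C_2, T^\star\} = \{C_1, T^{\star\star}\}$, forcing $C_2 = C_1$ (since $|T^\star| = n_2 - 1\ge 2 > 1$, $T^\star\ne C_1$, so $C_2$ must equal $C_1$) — contradiction as $|C_2| = 2\ne 1$. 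Hence no element of $D_1$ has the form $C_1\oplus T^\star$, and uniqueness follows. I should double-check the degenerate possibility $T^\star = C_1\oplus T^{\star\star}$ nested differently, but the root-subtree pair comparison handles it since it only uses the outermost $\oplus$.

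The main obstacle is purely bookkeeping: making sure that when we delete a leaf the root of $T$ does not get suppressed and that the resulting induced subtree really is $T_i\oplus(T_{3-i}$ minus a leaf$)$ rather than something collapsed — this is exactly where $n_1\ge 2$ is used, and it should be stated explicitly. The rest is an elementary size-comparison argument on unordered pairs of subtree shapes.
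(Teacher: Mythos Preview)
Your proposal is correct and follows essentially the same approach as the paper's proof, just with considerably more detail spelled out. The paper dispatches parts~(1) and~(3) in a single sentence (``follows from the definitions'' and ``follows from Statement~2 and the definitions''), and for part~(2) it argues in one line: given $T_1\oplus T^\star\in D_1\cap D_2$, since $T^\star\in\mathcal{D}(T_2)$ we have $T^\star\neq T_2$, so the root-pair forces $T_2=T_1$. Your multiset size comparison $\{n_1,n_2-1\}=\{n_2,n_1-1\}$ is just an explicit unpacking of that same observation. One small indexing slip to fix when you write it up: in your first display you say the omitted leaf $v$ lies in $T_i$ but then remove a leaf from $T_{3-i}$; the correct statement (which you give immediately after) is that $T[S]\in D_i$ when the omitted leaf lies in $T_{3-i}$.
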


\begin{proof}
    Statement~1 follows from the definitions and Statement~3 follows from Statement~2 and the definitions. For Statement~2, suppose $T_1 \oplus T^* \in D_1 \cap D_2$. Since $T^* \in \mathcal{D}(T_2)$, we cannot have $T^* = T_2$. Thus $T_2 = T_1$ and so $D_1 = D_2$. Last, if $D_1 = D_2$ we have $D_1 \cap D_2 \neq \emptyset$.
\end{proof}

The following two lemmas are obvious from the definitions
\begin{lemma}\label{lm:large} If $T$ is composed of two trees of size $n_1$ and $n_2$ where $2\le n_1\le n_2$ then the root-splits of the trees in the deck are $\{n_1-1,n_2\},\{n_1,n_2-1\}$.
\end{lemma}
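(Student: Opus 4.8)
The statement to prove is Lemma~\ref{lm:large}: if $T$ is composed of two trees of size $n_1$ and $n_2$ where $2\le n_1\le n_2$, then the root-splits of the trees in the deck are $\{n_1-1,n_2\},\{n_1,n_2-1\}$.

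The plan is to invoke Lemma~\ref{lm:largedecomp} directly. By part~1 of that lemma, $\mathcal{D}(T) = D_1 \cup D_2$, where $D_1 = \{T_1 \oplus T^\star : T^\star \in \mathcal{D}(T_2)\}$ and $D_2 = \{T_2 \oplus T^\star : T^\star \in \mathcal{D}(T_1)\}$. Since $2 \le n_1 \le n_2$, every $T_i$ has at least two leaves, so $\mathcal{D}(T_i) = \mathcal{D}_{n_i - 1}(T_i)$ is a nonempty set of trees each of size $n_i - 1$. Consequently, any tree in $D_1$ is of the form $T_1 \oplus T^\star$ with $|T_1| = n_1$ and $|T^\star| = n_2 - 1$, so its root-split is $\{n_1, n_2 - 1\}$; similarly, any tree in $D_2$ has root-split $\{n_2, n_1 - 1\} = \{n_1 - 1, n_2\}$.

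The only point requiring a word of care is that these really are the root-splits of the composed trees, i.e., that forming $T_i \oplus T^\star$ does not merge or collapse anything: this is immediate from the definition of $\oplus$, since a brand-new root vertex is introduced whose two children are the roots of $T_i$ and $T^\star$, and the sizes simply add. I would also note that both root-splits are genuinely attained: $\mathcal{D}(T_1)$ and $\mathcal{D}(T_2)$ are both nonempty because $n_1, n_2 \ge 2$, so $D_1$ and $D_2$ are both nonempty and every listed root-split occurs.

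There is essentially no obstacle here — the lemma is, as the text says, obvious from the definitions, and the content is just bookkeeping of leaf counts under the $\oplus$ operation together with the observation that deleting one leaf from $T_i$ produces a tree of size $n_i - 1$. The one-line proof is: combine Lemma~\ref{lm:largedecomp}(1) with the fact that $\mathcal{D}(T_i)$ consists of size-$(n_i-1)$ trees and that $|A \oplus B| = |A| + |B|$ with root-split $\{|A|,|B|\}$.
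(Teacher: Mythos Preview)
Your argument is correct and is precisely the unpacking of ``obvious from the definitions'' that the paper intends: apply Lemma~\ref{lm:largedecomp}(1) to write $\mathcal{D}(T)=D_1\cup D_2$ and read off the root-splits $\{n_1,n_2-1\}$ and $\{n_1-1,n_2\}$ from the sizes of the factors. There is nothing to add.
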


\begin{lemma}\label{lm:smalldecomp} 
Suppose $T = C_1 \oplus T_2$, where $T_2$ is a binary tree of size $n-1$ with $n \ge 3$. Let
$D_2= \{C_1\oplus T^{\star}:T^{\star}\in \mathcal{D}(T_2)\}$. Then
 $\mathcal{D}(T)=\{T_2\}\cup D_2$ and $T_2\in D_2$ if and only if $T_2$ has a leaf at distance $1$ from its root.
Consequently, one of the rootsplits of the trees in the deck of $T$ is $\{1,n_2-1\}$ and the other (not necessarily different)
is the root-split of $T_2$.
\end{lemma}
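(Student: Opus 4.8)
The statement claims that if $T = C_1 \oplus T_2$ with $|T_2| = n-1$ and $n \ge 3$, then, writing $D_2 = \{C_1 \oplus T^\star : T^\star \in \mathcal{D}(T_2)\}$, we have $\mathcal{D}(T) = \{T_2\} \cup D_2$, and moreover $T_2 \in D_2$ precisely when $T_2$ has a leaf at distance $1$ from its root. My plan is to directly analyze how a leaf set $S \in \binom{\bbL(T)}{n-1}$ sits relative to the root-split of $T$. Write $r_T$ for the root of $T$, with the two children subtrees being the single leaf $\ell$ (the copy of $C_1$) and the tree $T_2$. A size-$(n-1)$ leaf subset $S$ either omits $\ell$ or omits a leaf of $T_2$. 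If $S$ omits $\ell$, then $S = \bbL(T_2)$ and $T[S] = T_2$ after suppressing the degree-$2$ vertex $r_T$; this gives the summand $\{T_2\}$. If instead $S$ contains $\ell$ and omits one leaf $v \in \bbL(T_2)$, then $S$ still meets both sides of the root-split of $T$, so $r_{T[S]}$ is still the (image of the) root of $T$, one side of its split is $\ell = C_1$, and the other side is $T_2[\bbL(T_2)\setminus\{v\}] \in \mathcal{D}(T_2)$; this produces exactly $D_2$. This establishes $\mathcal{D}(T) = \{T_2\} \cup D_2$.

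For the second assertion I would compare $T_2$ with the members of $D_2$. An element of $D_2$ has the form $C_1 \oplus T^\star$ with $|T^\star| = n-2$, so it equals $T_2$ if and only if $T_2$ itself can be written as $C_1 \oplus T^\star$ for some binary tree $T^\star$ of size $n-2$ with $T^\star \in \mathcal{D}(T_2)$. Now $T_2 = C_1 \oplus T^\star$ is equivalent to $T_2$ having a child of its root that is a leaf (equivalently, a leaf at distance $1$ from $r_{T_2}$), and when this holds, the complementary subtree $T^\star$ is obtained from $T_2$ by deleting that leaf and suppressing, so $T^\star \in \mathcal{D}(T_2)$ automatically. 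Hence $T_2 \in D_2$ iff $T_2$ has a leaf at distance $1$ from its root, as claimed. (When $n = 3$, $T_2 = C_2$ does have such a leaf, consistent with $\mathcal{D}(C_1 \oplus C_2) = \{C_2\}$.)

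The final ``Consequently'' sentence about root-splits then follows immediately: every tree in $D_2$ has root-split $\{1, n-2\}$ since it is of the form $C_1 \oplus T^\star$ with $|T^\star| = n-2$ (note $n-2 = n_2 - 1$ in the lemma's notation), while the remaining deck element $T_2$ has whatever root-split $T_2$ has. I do not anticipate a genuine obstacle here; the only points requiring a little care are the degenerate boundary case $n = 3$ (where $T_2 = C_2$, $\mathcal{D}(T_2) = \{C_1\}$, and one should check the two descriptions coincide) and making sure the root-vertex identification under suppression is stated cleanly — i.e., that when $S$ meets both sides of the root-split the induced root is genuinely the image of $r_T$, so that the root-split of $T[S]$ is as described rather than something arising deeper in the tree.
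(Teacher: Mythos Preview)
Your argument is correct and is exactly the direct verification the paper has in mind; indeed, the paper does not give a proof of this lemma at all, stating only that it is ``obvious from the definitions.'' Your write-up simply spells out those details carefully, including the boundary case $n=3$, so there is nothing to add.
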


Recall that $R(4) = R^{(m)}(4) = 4$ and $R(5) = 5$ while $R^{(m)}(5) = 4$. For $n \ge 6$ we next show that the size-$n$ rooted binary trees are determined by their decks. It follows that they are determined by their multidecks.

\begin{theorem}\label{th:deck} %If $n \notin \{4, 5\}$ then the size-$n$ rooted binary trees are determined by their decks.
If $n \notin \{4,5\}$, then $R(n) \le n-1$. 
\end{theorem}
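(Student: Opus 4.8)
The plan is to prove Theorem~\ref{th:deck} by strong induction on $n$. The base case $n=6$ is handled by directly listing the $W_6=6$ size‑$6$ trees and checking their decks are pairwise distinct; for $n\ge 7$ I assume that size‑$(m-1)$ decks determine size‑$m$ trees for every $m$ with $6\le m\le n-1$. Fix a size‑$n$ tree $T=T_1\oplus T_2$ with root‑split $\{n_1,n_2\}$, $n_1\le n_2$, and for a tree $Z$ let $s(Z)$ be the size of the smaller of the two subtrees at the root of $Z$ (so the root‑split of $Z$ is $\{s(Z),\,|Z|-s(Z)\}$). The first step is to read off the ``shape type'' of $T$ from $\mathcal{D}(T)$: using Lemmas~\ref{lm:large} and~\ref{lm:smalldecomp}, the set $\{s(Z):Z\in\mathcal{D}(T)\}$ equals $\{n_1-1,\,n_1\}$ when $2\le n_1<n_2$; equals the singleton $\{n/2-1\}$ when $n_1=n_2$; and equals $\{1,\,s(T_2)\}$ when $n_1=1$. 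Since $n\ge 6$ gives $n/2-1\ge 2>1$, this set determines the type (and in the first type also determines $n_1=\max$), except for the single ambiguity that the set $\{1,2\}$ arises both from $n_1=2$ (so $T=C_2\oplus T_2$) and from $n_1=1,\ s(T_2)=2$ (so $T=C_1\oplus(C_2\oplus W)$).

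Next I reconstruct $T$ type by type. If $n_1=n_2$, then by Lemma~\ref{lm:largedecomp} every $Z\in\mathcal{D}(T)$ has its larger root‑subtree in $\{T_1,T_2\}$ and both occur, so the set of larger root‑subtrees of members of $\mathcal{D}(T)$ is exactly $\{T_1,T_2\}$ and determines $T$. If $2\le n_1<n_2$, the members $Z$ of $\mathcal{D}(T)$ with $s(Z)=n_1-1$ are precisely those of $D_2=\{T_2\oplus Z:Z\in\mathcal{D}(T_1)\}$, each of which has larger root‑subtree $T_2$, so $T_2$ (hence $\mathcal{D}(T_2)$) is recovered; then $T_1$ is the unique size‑$n_1$ tree $U$ for which $\{U\oplus Z:Z\in\mathcal{D}(T_2)\}$ equals the set $D_1$ of members of $\mathcal{D}(T)$ with $s(Z)=n_1$, uniqueness following from a short count of the multiset of root‑subtrees appearing across $D_1$ (computed via $U$ it consists of $|\mathcal{D}(T_2)|$ copies of $U$ together with $\mathcal{D}(T_2)$, and cancelling $\mathcal{D}(T_2)$ forces $U=T_1$). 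If $n_1=1$ and $s(T_2)\ge 3$, then by Lemma~\ref{lm:smalldecomp} $\mathcal{D}(T)=\{T_2\}\cup D_2$ where all members of $D_2$ have $s=1$, so $T_2$ is the unique member of $\mathcal{D}(T)$ with $s(Z)\ne 1$ and $T=C_1\oplus T_2$.

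Two cases are more delicate. If $n_1=1$ and $s(T_2)=1$, Lemma~\ref{lm:smalldecomp} gives $\mathcal{D}(T)=D_2=\{C_1\oplus Z:Z\in\mathcal{D}(T_2)\}$, so $\mathcal{D}(T_2)$ is recovered as the set of larger root‑subtrees of members of $\mathcal{D}(T)$; since $|T_2|=n-1\ge 6$, the induction hypothesis recovers $T_2$ and hence $T=C_1\oplus T_2$ (this is also why $n=6$ must be a base case, and it subsumes $T=C_n$). Finally, in the ambiguous type $\{1,2\}$ I separate $T=C_2\oplus T_2'$ from $T=C_1\oplus(C_2\oplus W)$ by examining the number of members of $\mathcal{D}(T)$ with $s=2$ and the quantity $|\mathcal{D}(T)|$. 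In the first situation $D_2=\{C_1\oplus T_2'\}$ is a single tree (as $\mathcal{D}(C_2)=\{C_1\}$) and $D_1=\{C_2\oplus Z:Z\in\mathcal{D}(T_2')\}$, so there is exactly one member with $s=1$, namely $C_1\oplus T_2'$, from which $T_2'$ is the larger root‑subtree; moreover if that situation has only one member with $s=2$ then $|\mathcal{D}(T)|=2$. In the second situation $T_2=C_2\oplus W$ is the unique member with $s=2$, and $|\mathcal{D}(T)|=1+|\mathcal{D}(C_2\oplus W)|\ge 3$. Comparing these two numbers decides which situation holds, after which $T$ is read off as described.

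The main obstacle is this last dichotomy inside the type $\{1,2\}$: it is the only place where the root‑split data alone fails to separate two genuinely different trees, so one must verify that the two auxiliary counts really do pin it down in all subcases. The secondary points needing care are arranging the induction so that the ``delete a leaf below the root'' reduction always lands on a tree of size at least $6$ (hence the separate treatment of $n=6$), and checking that ``larger root‑subtree'' is well defined at every use, which holds because at each such point the two root‑subtrees have different sizes. Everything else is a direct application of Lemmas~\ref{lm:largedecomp},~\ref{lm:large} and~\ref{lm:smalldecomp}.
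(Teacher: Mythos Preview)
Your proof is correct and follows essentially the same case analysis as the paper's, with one genuine difference worth noting. In the situation $n_1=1$ and $s(T_2)=1$ (the paper's Case~2 with $b=1$), the paper argues directly: writing $T=(C_1\oplus)^i(T'\oplus T'')$, it identifies the cards of the form $(C_1\oplus)^{i-1}(T'\oplus T'')$ as those where the $C_1$--chain is shortest, and reads $T$ off from any one of them. You instead recover $\mathcal{D}(T_2)$ as the set of larger root-subtrees of $\mathcal{D}(T)$ and then invoke the induction hypothesis on $T_2$ (which has size $n-1\ge 6$). Your inductive reduction is cleaner here and is the reason your argument is framed as strong induction rather than a direct reconstruction; the paper's argument has the advantage of being self-contained at each $n$. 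The remaining cases match: your multiset-cancellation recovery of $T_1$ when $2\le n_1<n_2$ is equivalent to the paper's ``unique tree composed of isomorphic parts'' trick, and your separation of the ambiguous type $\{1,2\}$ via $\#\{s=2\}$ and $|\mathcal{D}(T)|$ is logically equivalent to the paper's count of $|\mathcal{D}(T)\cap C|$ (both hinge on the fact that $|\mathcal{D}(C_2\oplus W)|\ge 2$ when $|W|\ge 4$). Your presentation of that last dichotomy is a bit terse---you should state explicitly that $\#\{s=2\}\ge 2$ forces the $C_2\oplus T_2'$ situation by contrapositive---but the logic is sound.
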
 

\begin{figure}[!ht]
\centering
\begin{tikzpicture}[scale=.6]
\node[] at (14,3) {}; %for space
       \node[fill=black,rectangle,inner sep=2pt]   at (11,0) {};
   \node[fill=black,rectangle,inner sep=2pt]   at (12,0) {};
    \node[fill=black,rectangle,inner sep=2pt]   at (13,0) {};      
    \node[fill=black,rectangle,inner sep=2pt]   at (14,0) {};     
    \node[fill=black,rectangle,inner sep=2pt]   at (15,0) {};
    \node[fill=black,rectangle,inner sep=2pt]   at (16,0) {};      
          \node[fill=black,circle,inner sep=1pt]  at (11.5,0.5) {};
          \node[fill=black,circle,inner sep=1pt]  at (12,1) {};
             \node[fill=black,circle,inner sep=1pt]  at (12.5,1.5) {};
             \node[fill=black,circle,inner sep=1pt]  at (13,2) {};   
              \node[fill=black,circle,inner sep=1pt]  at (13.5,2.5) {};               
          \draw (11,0)--(11.5,.5)--(12,0);
           \draw (11.5,.5)--(12,1)--(13,0);
          \draw (12,1)--(12.5,1.5)--(14,0);
            \draw (12.5,1.5)--(13,2)--(15,0);
            \draw (13,2)--(13.5,2.5)--(16,0);
\node at (13.5,-1) {$(6,0,0)\rightarrow (1,0,0)$};
\end{tikzpicture}
\quad
\begin{tikzpicture}[scale=.6]
      \node[fill=black,rectangle,inner sep=2pt]   at (19,0) {};
   \node[fill=black,rectangle,inner sep=2pt]   at (20,0) {};
    \node[fill=black,rectangle,inner sep=2pt]   at (21,0) {};      
    \node[fill=black,rectangle,inner sep=2pt]   at (22,0) {};    
    \node[fill=black,rectangle,inner sep=2pt]   at (23,0) {};   
    \node[fill=black,rectangle,inner sep=2pt]   at (24,0) {};                       
          \node[fill=black,circle,inner sep=1pt]  at (19.5,0.5) {};
          \node[fill=black,circle,inner sep=1pt]  at (21.5,.5) {};
             \node[fill=black,circle,inner sep=1pt]  at (20.5,1.5) {};
              \node[fill=black,circle,inner sep=1pt]  at (21,2) {};   
              \node[fill=black,circle,inner sep=1pt]  at (21.5,2.5) {};                                 
          \draw (19,0)--(19.5,.5)--(20,0);
           \draw (19.5,.5)--(20.5,1.5)--(21.5,0.5);
          \draw (21,0)--(21.5,.5)--(22,0);
         \draw (20.5,1.5)--(21,2)--(23,0);
         \draw (21,2)--(21.5,2.5)--(24,0);
\node at (21.5,-1) {$(4,2,0)\rightarrow (1,1,0)$};
\end{tikzpicture}
\quad
\begin{tikzpicture}[scale=.6]
     \node[fill=black,rectangle,inner sep=2pt]   at (32,0) {};
     \node[fill=black,rectangle,inner sep=2pt]   at (30,0) {};
   \node[fill=black,rectangle,inner sep=2pt]   at (31,0) {};
          \node[fill=black,circle,inner sep=1pt]  at (30.5,0.5) {};
          \draw (30,0)--(30.5,.5)--(31,0);            
  \node[fill=black,rectangle,inner sep=2pt]   at (27,0) {};
   \node[fill=black,rectangle,inner sep=2pt]   at (28,0) {};
    \node[fill=black,rectangle,inner sep=2pt]   at (29,0) {};      
          \node[fill=black,circle,inner sep=1pt]  at (27.5,0.5) {};
          \node[fill=black,circle,inner sep=1pt]  at (28,1) {};
             \node[fill=black,circle,inner sep=1pt]  at (29,2) {};  
              \node[fill=black,circle,inner sep=1pt]  at (29.5,2.5) {};           
          \draw (27,0)--(27.5,.5)--(28,0);
           \draw (27.5,.5)--(28,1)--(29,0);
       \draw (28,1)--(29,2)--(30.5,0.5);
       \draw (29,2)--(29.5,2.5)--(32,0);
\node at (29.5,-1) {$(2,1,3)\rightarrow (1,1,1)$};   
\end{tikzpicture} 
\\
\begin{tikzpicture}[scale=.6]
\node[] at (14,3) {}; %for space
       \node[fill=black,rectangle,inner sep=2pt]   at (11,0) {};
   \node[fill=black,rectangle,inner sep=2pt]   at (12,0) {};
    \node[fill=black,rectangle,inner sep=2pt]   at (13,0) {};      
    \node[fill=black,rectangle,inner sep=2pt]   at (14,0) {};     
    \node[fill=black,rectangle,inner sep=2pt]   at (15,0) {};
    \node[fill=black,rectangle,inner sep=2pt]   at (16,0) {};      
          \node[fill=black,circle,inner sep=1pt]  at (11.5,0.5) {};
          \node[fill=black,circle,inner sep=1pt]  at (12,1) {};
             \node[fill=black,circle,inner sep=1pt]  at (12.5,1.5) {};
             \node[fill=black,circle,inner sep=1pt]  at (15.5,.5) {};   
              \node[fill=black,circle,inner sep=1pt]  at (13.5,2.5) {};               
          \draw (11,0)--(11.5,.5)--(12,0);
           \draw (11.5,.5)--(12,1)--(13,0);
          \draw (12,1)--(12.5,1.5)--(14,0);
            \draw (12.5,1.5)--(13.5,2.5)--(15.5,0.5);
            \draw (15,0)--(15.5,0.5)--(16,0);
\node at (13.5,-1) {$(2,0,4)\rightarrow (1,0,1)$};
\end{tikzpicture}
\quad
\begin{tikzpicture}[scale=.6]
\node[] at (14,3) {}; %for space
       \node[fill=black,rectangle,inner sep=2pt]   at (11,0) {};
   \node[fill=black,rectangle,inner sep=2pt]   at (12,0) {};
    \node[fill=black,rectangle,inner sep=2pt]   at (13,0) {};      
    \node[fill=black,rectangle,inner sep=2pt]   at (14,0) {};     
    \node[fill=black,rectangle,inner sep=2pt]   at (15,0) {};
    \node[fill=black,rectangle,inner sep=2pt]   at (16,0) {};      
          \node[fill=black,circle,inner sep=1pt]  at (11.5,0.5) {};
          \node[fill=black,circle,inner sep=1pt]  at (13.5,.5) {};
             \node[fill=black,circle,inner sep=1pt]  at (12.5,1.5) {};
             \node[fill=black,circle,inner sep=1pt]  at (15.5,.5) {};   
              \node[fill=black,circle,inner sep=1pt]  at (13.5,2.5) {};               
          \draw (11,0)--(11.5,.5)--(12,0);
           \draw (14,0)--(13.5,.5)--(13,0);
          \draw (11.5,.5)--(12.5,1.5)--(13.5,0.5);
            \draw (12.5,1.5)--(13.5,2.5)--(15.5,0.5);
            \draw (15,0)--(15.5,0.5)--(16,0);
\node at (13.5,-1) {$(0,2,4)\rightarrow (0,1,1)$};
\end{tikzpicture}
\quad
\begin{tikzpicture}[scale=.6]
     \node[fill=black,rectangle,inner sep=2pt]   at (32,0) {};
     \node[fill=black,rectangle,inner sep=2pt]   at (30,0) {};
   \node[fill=black,rectangle,inner sep=2pt]   at (31,0) {};
          \node[fill=black,circle,inner sep=1pt]  at (30.5,0.5) {};
          \draw (30,0)--(30.5,.5)--(31,0);            
  \node[fill=black,rectangle,inner sep=2pt]   at (27,0) {};
   \node[fill=black,rectangle,inner sep=2pt]   at (28,0) {};
    \node[fill=black,rectangle,inner sep=2pt]   at (29,0) {};      
          \node[fill=black,circle,inner sep=1pt]  at (27.5,0.5) {};
          \node[fill=black,circle,inner sep=1pt]  at (28,1) {};
             \node[fill=black,circle,inner sep=1pt]  at (31,1) {};  
              \node[fill=black,circle,inner sep=1pt]  at (29.5,2.5) {};           
          \draw (27,0)--(27.5,.5)--(28,0);
           \draw (27.5,.5)--(28,1)--(29,0);
       \draw (32,0)--(31,1)--(30.5,0.5);
       \draw (28,1)--(29.5,2.5)--(31,1);
\node at (29.5,-1) {$(0,0,6)\rightarrow (0,0,1)$};   
\end{tikzpicture} 
\caption{Binary trees of size $6$. The triplets $(i,j,k)\rightarrow(\ell,m,n)$ are the multiplicities of the three size-$5$ trees
$C_5$, $C_1\oplus B_2$, and $C_2\oplus C_3$ respectively in the multideck and then the deck of the tree.}\label{fig:6trees}
\end{figure}
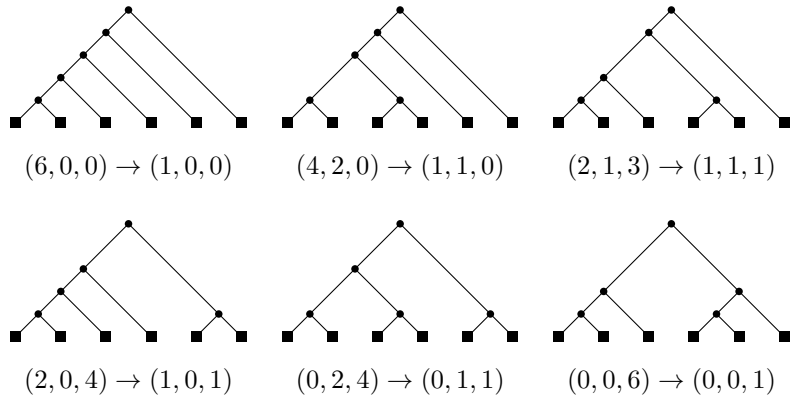

\begin{proof} 
As $R(n)$ is only defined for $n \geq 4$, we only need to show the statement for $n\ge 6$. One can directly verify the statement for the six trees of size 6 -- see Figure~\ref{fig:6trees}. 

Let $T$ be a size-$n$ tree with $n > 6$. Suppose $T = T_1 \oplus T_2$, where $|T_i| = n_i$ and $n_1 \le n_2$. There are three possibilities for $n_1$ and $n_2$. If $n_1 = n_2$, then $n_1 \ge 4$ and all trees in $\mathcal{D}(T)$ have root-split $\{n_1-1, n_1\}$ by Lemma~\ref{lm:large}. If instead $2 \le n_1 < n_2$, again by Lemma~\ref{lm:large}, the trees in $\mathcal{D}(T)$ have two different root-splits $\{n_1-1, n_2\}$ and $\{n_1, n_2 -1\}$. Finally, if $n_1 = 1$, then by Lemma~\ref{lm:smalldecomp}, the trees in $\mathcal{D}(T)$ have root-splits $\{1, n_2-1\}$ and $\{\ell, n_2 - \ell\}$ where the latter is the root-split of $T_2$ and we assume that $\ell \le n_2 - \ell$. Suppose that the root-splits of the elements of $\mathcal{D}(T)$ are $\{a, n-1-a\}$ and $\{b, n-1-b\}$ where $1 \le a \le b \le \frac{n-1}{2}$. 

\textbf{Case 1:} $a \ge 2$. Then we have $n_1 = a+1 > 2$ and so $n_2 = n-1-a$ and $\{b, n-1-b\} = \{n_1, n_2 -1\}$. 

If $a=b$, then $n_1 = n_2$ and the root-splits are all $\{n_1-1, n_1\}$. For each $T' \in \mathcal{D}(T)$, we have $T' = T_1' \oplus T_2'$ with $|T_1'| < |T_2'|$ so that $T_2' \in \{T_1, T_2\}$. Thus, by looking at all elements of $\mathcal{D}(T)$ we can identify $T_1$ and $T_2$ and reconstruct $T$. 

If instead $a < b$, then we have $3 \le n_1 < n_2$ and $b = n_1$. Then there exists $T' \in \mathcal{D}(T)$ such that $T' = T_1' \oplus T_2'$ with $|T_1'| = a$. Then $T_2' = T_2$. 

Let $D = \{T' \in \mathcal{D}(T): T' = T_1' \oplus T_2', |T_1'| = b\}$. Then $D = \{P \oplus T_1: P \in \mathcal{D}(T_2)\}$. If $T_1 \in \mathcal{D}(T_2)$, then $D$ contains a unique tree composed of isomorphic trees, namely $T_1 \oplus T_1$, and we can thus recognize $T_1$. Otherwise, $T_1 \notin \mathcal{D}(T_2)$ and we can recognize $T_1$ from any element of $D$. 

\textbf{Case 2:} $a = 1$ and $b \neq 2$. Then $n_1 = 1$, $T_1 = C_1$, $n_2 = n-1$ and $\{b, n-1-b\}$ is root-split of $T_2$. 

If $b > 2$, then there is a unique tree in $\mathcal{D}(T)$ with root split $\{b, n-1-b\}$ and that tree is $T_2$. 

Suppose $b = 1$, then $T_2 = C_1 \oplus T_3$ for some tree $T_3$. Then either $T = C_n$ or $T = (C_1 \oplus)^i (T' \oplus T'')$ for some $i \ge 2$ and trees $T'$ and $T''$ each with at least two leaves. If $T = C_n$, then $\mathcal{D}$ contains a single tree $C_{n-1}$. Otherwise $\mathcal{D}(T)$ contains trees of the forms $(C_1 \oplus)^{i-1} (T' \oplus T'')$ and $(C_1 \oplus)^j T'''$ for some $j \ge i$ and tree $T'''$. In the former case, we can recognize $T = C_n$ and in the latter, the cards of the form $(C_1 \oplus)^{i-1} (T' \oplus T'')$ allow us to recognize $T_2$ and reconstruct $T$.

\textbf{Case 3:} $a = 1$ and $b = 2$. Then $n_1 \in \{1,2\}$ and $T_1=C_{n_1}$. Set $C = \{C_1 \oplus Q: |Q| = n-1\}$. We claim that we can determine $n_1$ (and consequently $T_1$) from $|\mathcal{D}(T) \cap C|$. If $n_1 = 1$, then $T_1 = C_1$ and $T_2 = C_2 \oplus T_3$ for some tree $T_3$. Then $\mathcal{D}(T) = \{C_2 \oplus T_3, C_1 \oplus (C_1 \oplus T_3)\} \cup \{C_1 \oplus (C_2 \oplus T') : T' \in \mathcal{D}(T_3)\}$. Hence $|\mathcal{D}(T) \cap C| \ge 2$. If instead $n_1 = 2$, then $T_1 = C_2$ and $\mathcal{D}(T) = \{C_1 \oplus T_2\} \cup \{C_2 \oplus T': T' \in \mathcal{D}(T_2)\}$ by Lemma~\ref{lm:largedecomp}. Hence $|\mathcal{D} \cap C| = 1$. 

In the case that $n_1 = 1$, $\mathcal{D}(T) \setminus C = \{T_2\}$. In the case $n_1 = 2$, $\mathcal{D}(T) \cap C = \{C_1 \oplus T_2\}$, which allows us to recognize $T_2$. In either case we can then reconstruct $T$. 
\end{proof}

\begin{corollary}\label{th:multi} If $n\ne 4$, then $R^{(m)}(n) \le n-1$. 
\end{corollary}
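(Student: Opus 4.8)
The plan is to deduce Corollary~\ref{th:multi} directly from Theorem~\ref{th:deck} together with the Lemma in Section~\ref{prelim} relating decks of different sizes, handling the small exceptional value $n=5$ separately. The key observation is that a multideck always refines the corresponding deck: for any tree $T$ and any $j$, the multiset $\Dm_j(T)$ determines the set $\D_j(T)$, since $\D_j(T)$ is precisely the support (the set of elements with positive multiplicity) of $\Dm_j(T)$. Hence, whenever the size-$j$ decks determine the size-$n$ trees, so do the size-$j$ multidecks, and consequently $R^{(m)}(n) \le R(n)$ for every $n \ge 4$.

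First I would record this inequality $R^{(m)}(n) \le R(n)$, which is immediate from the preceding sentence. For $n \notin \{4,5\}$, Theorem~\ref{th:deck} gives $R(n) \le n-1$, so $R^{(m)}(n) \le n-1$ follows at once. It remains to treat $n = 5$. But this case was already settled in the discussion following the definition of $R(n)$ and $R^{(m)}(n)$: Figure~\ref{fig:smalltrees} exhibits the multiplicities of $C_4$ and $B_2$ in the size-$4$ multidecks of the three size-$5$ trees --- namely $(5,0)$, $(4,1)$, and $(2,3)$ --- and since these three pairs are pairwise distinct, the size-$4$ multidecks determine the size-$5$ trees, giving $R^{(m)}(5) = 4 = 5-1 = n-1$. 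Combining the two cases yields $R^{(m)}(n) \le n-1$ for all $n \ne 4$.

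I do not anticipate any genuine obstacle here; the corollary is essentially a bookkeeping consequence of Theorem~\ref{th:deck}. The only point requiring a little care is making sure the $n=5$ case is not overlooked, since Theorem~\ref{th:deck} explicitly excludes both $4$ and $5$ while the corollary excludes only $4$; the resolution is simply to cite the explicit computation already displayed in Figure~\ref{fig:smalltrees}. One could alternatively phrase the whole argument as: "$\D_j(T)$ is the support of $\Dm_j(T)$, so $R^{(m)}(n) \le R(n)$, and the claim follows from Theorem~\ref{th:deck} for $n \ge 6$ and from Figure~\ref{fig:smalltrees} for $n = 5$."
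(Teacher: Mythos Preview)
Your proposal is correct and follows essentially the same approach as the paper's own proof. The paper's proof is even terser---``We already determined this for $n\le 5$, and the rest follows from Theorem~\ref{th:deck}''---but the underlying logic is identical: cite the explicit computation from Figure~\ref{fig:smalltrees} for $n=5$, and invoke Theorem~\ref{th:deck} (together with the obvious fact that $\D_j(T)$ is recoverable from $\Dm_j(T)$) for $n\ge 6$.
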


\begin{proof} We already determined this for $n\le 5$, and the rest follows from Theorem~\ref{th:deck}.
\end{proof}

\section{Problems related to deck cardinalitites} \label{sec:non-isom subtrees}       
In this section, we consider questions related to the maximum and minimum cardinality of a size-$i$ deck of a size-$n$ tree, as well as the maximum and minimum number of subtrees of size-$n$ trees. Note that the number of subtrees of a tree $T$ is $S(T) = \sum_{i=1}^{|T|}|\D_i(T)|$. 

We start by considering trees with few subtrees. Since for all $i \in [n]$, we have $\D_i(C_n) = \{C_i\}$, the minimum cardinality of a size-$i$ deck of a size-$n$ tree is one and $\min_{T: |T|=n} S(T) = n$. In Section~\ref{subsec: few subtrees}, we characterize the trees that have only one size-$i$ subtree and show that if $S(T) = n$, then either $T = C_n$ for some $n$ or $T = B_2$. 

We then turn to trees with many subtrees, where we denote the maximum number of subtrees of size-$n$ trees by $S(n)$, that is, $S(n)=\max_{T:|T|=n}\sum_{i=1}^n |\D_i(T)|$. In Section~\ref{subsec:many subtrees}, we obtain upper and lower bounds for $S(n)$. More precisely, we show that $S(n) \in O(1.8648^n)$ (Theorem~\ref{thm:Snupperbound}) and $S(n) \in \Omega(5^{n/4})$ (Theorem~\ref{thm:Snlowerbound}).

We end by briefly discussing \emph{$k$-universal} trees, that is, trees that contain all size-$k$ trees in their size-$k$ deck, i.e., $\vert \D_k(T) \vert = W_k$. We use $u(k)$ to denote the minimum number of leaves in a $k$-universal tree and recall lower and upper bounds for $u(k)$ from the literature in Section~\ref{subsec:universal}. We complement this with an exhaustive enumeration of $k$-universal trees for small $k$. 

We remark that the results in Sections~\ref{subsec:many subtrees} and~\ref{subsec:many subtrees} are related in the following way. If $(i,n)$ is a pair of integers such that $n\ge u(i)$, then the maximum cardinality of a size-$i$ deck in the size-$n$ trees is $W_i$. For $i\in[3]$, we have $W_i=1=\D_i(T)$ for any $T$ with $|T|\ge i$, and for $n\ge 5$ we show that $|\D_4(T)|=2=W_4$ for any $T \neq C_n$, so the maximum cardinality of a size-$i$ deck among the size-$n$ trees is undetermined only for $5\le i< u(i)$.

\subsection{Trees containing few subtrees} \label{subsec: few subtrees}

First we consider trees with minimum-size decks. 

\begin{lemma}\label{lm:cater} Assume that $T=C_1\oplus T_2$ for
some rooted binary tree $T_2$ and $|\mathcal{D}(T)|=1$. Then
$T=C_n$ where $n$ is the size of $T$.
\end{lemma}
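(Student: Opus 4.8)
The plan is to proceed by induction on $n = |T|$, using the structural description of the deck of a tree of the form $C_1 \oplus T_2$ provided by Lemma~\ref{lm:smalldecomp}. First I would dispose of the base cases: if $n \le 3$ the size-$n$ binary tree is unique and equals $C_n$, so there is nothing to prove; in fact we may assume $n \ge 3$ so that $|T_2| = n-1 \ge 2$ and Lemma~\ref{lm:smalldecomp} applies. By that lemma, $\mathcal{D}(T) = \{T_2\} \cup D_2$ where $D_2 = \{C_1 \oplus T^\star : T^\star \in \mathcal{D}(T_2)\}$. Since $|\mathcal{D}(T)| = 1$, both $T_2$ and every element of $D_2$ must coincide with the single card. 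In particular $|D_2| = 1$, so $|\mathcal{D}(T_2)| = 1$; and the unique card equals $T_2$ and also equals $C_1 \oplus T^\star$ for the unique $T^\star \in \mathcal{D}(T_2)$.

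Next I would extract the key consequence: $T_2 = C_1 \oplus T^\star$ for some tree $T^\star$ with $|T^\star| = n-2$, i.e.\ $T_2$ itself has a leaf at distance $1$ from its root and decomposes as $C_1 \oplus T^\star$. Moreover $T^\star$ is the unique element of $\mathcal{D}(T_2)$, so $|\mathcal{D}(T_2)| = 1$ as already noted. Thus $T_2$ satisfies the same hypotheses as $T$ (it is of the form $C_1 \oplus (\text{binary tree})$ and has a size-$1$-smaller deck of cardinality $1$), but with one fewer leaf. Applying the induction hypothesis to $T_2$ gives $T_2 = C_{n-1}$, and hence $T = C_1 \oplus C_{n-1} = C_n$ by the definition of the caterpillar. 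One should double-check the smallest nontrivial instance by hand (e.g.\ $n = 4$: the trees of the form $C_1 \oplus T_2$ are $C_1 \oplus C_3 = C_4$ and $C_1 \oplus B_2$; the latter has deck $\{B_2, C_1\oplus C_2\} = \{B_2, C_3\}$ of size $2$, so only $C_4$ survives), which confirms the induction engages correctly.

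The main obstacle — really the only subtlety — is being careful about what "the unique card" forces when $T_2$ does \emph{not} have a leaf at distance $1$ from its root: then Lemma~\ref{lm:smalldecomp} tells us $T_2 \notin D_2$, so $\mathcal{D}(T)$ contains $T_2$ together with at least one distinct tree from $D_2$ (note $D_2 \ne \emptyset$ since $\mathcal{D}(T_2) \ne \emptyset$ for $|T_2| \ge 2$), contradicting $|\mathcal{D}(T)| = 1$. Hence the leaf-at-distance-$1$ condition is automatic, which is exactly what lets the induction close; I would make sure to state this dichotomy explicitly rather than leaving it implicit. Everything else is bookkeeping with the recursive deck formula.
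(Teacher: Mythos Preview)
Your proposal is correct and follows essentially the same approach as the paper: induction on $n$, invoking Lemma~\ref{lm:smalldecomp} to deduce that $T_2$ must itself be of the form $C_1\oplus T_3$ with $|\mathcal{D}(T_2)|=1$, and then applying the induction hypothesis to $T_2$. The paper's write-up is terser (it jumps directly to ``by Lemma~\ref{lm:smalldecomp} we have $T_2=C_1\oplus T_3$'' without spelling out the dichotomy you highlight), but the logical content is the same.
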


\begin{proof}
Let $n$ be the size of $T$; we will prove the statement by induction on $n$.
When $n=2$, it must be that $T= C_2$. Assume $n>2$ and the statement is true for trees of size $n-1$. By Lemma~\ref{lm:smalldecomp} we have that $T_2=C_1\oplus T_3$
for some tree $T_3$. Furthermore, $|\mathcal{D}(T_2)|=1$, so by the induction hypothesis $T_2= C_{n-1}$. Therefore $T=C_n$.
\end{proof}

\begin{definition} For $k,\ell\in\mathbb{N}$, where $\ell \ge 2$, we define the \emph{$(k,\ell)$-jellyfish $J_{k,\ell}$} as follows: take a height-$k$ rooted complete binary tree $B_k$ and identify each of its leaves with the root of a caterpillar $C_{\ell}$.
\end{definition}

An example is shown in Figure \ref{fig:Ckl}. We have $|J_{k,\ell}| = 2^k\ell$. Observe that $J_{k,2}=B_{k+1}$ and $J_{0,\ell}=C_{\ell}$.

\begin{figure}%[h!]
\centering
\scalebox{0.75}{
\begin{tikzpicture}
    \pic at (0,0) {tree4.1};
    \pic at (4,0) {tree4.1};
    \draw (1.5,1.5) -- (3.5,3.5) -- (5.5,1.5);
    \node[fill=black,circle,inner sep=1pt]  at (3.5,3.5) {};
\end{tikzpicture}
}
\caption{The tree $J_{1,4}$.}
\label{fig:Ckl}
\end{figure}
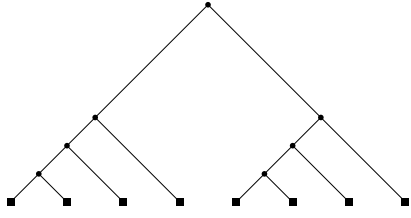

\begin{theorem}\label{th:singleD} If $T$ is a rooted binary tree, then
$|\mathcal{D}(T)|=1$ if and only if $T= J_{k,\ell}$ for some $(k,\ell)$.%\ne (0,1)$.
\end{theorem}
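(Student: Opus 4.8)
The plan is to prove both directions, with the interesting content being the ``only if'' direction. For the ``if'' direction, I would argue by induction on $k$ that $|\mathcal{D}(J_{k,\ell})| = 1$. For $k=0$ this is the statement that $\mathcal{D}_{\ell-1}(C_\ell) = \{C_{\ell-1}\}$, which is immediate since every induced subtree of a caterpillar is a caterpillar. For $k \ge 1$, write $J_{k,\ell} = J_{k-1,\ell} \oplus J_{k-1,\ell}$; since $T$ is composed of two isomorphic trees, Lemma~\ref{lm:largedecomp} (Statement~2) gives $\mathcal{D}(T) = D_1 = \{J_{k-1,\ell} \oplus T^\star : T^\star \in \mathcal{D}(J_{k-1,\ell})\}$, and by the inductive hypothesis $\mathcal{D}(J_{k-1,\ell})$ is a singleton, so $\mathcal{D}(J_{k,\ell})$ is a singleton. (One should double-check the degenerate case $\ell = 2$, $k = 0$ versus the convention $\ell \ge 2$, but $J_{0,2} = C_2$ has a one-element deck as well.)

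For the ``only if'' direction, suppose $|\mathcal{D}(T)| = 1$ and let $n = |T|$; I would induct on $n$. Small cases $n \le 3$ are immediate since those trees are unique and are caterpillars $C_n = J_{0,n}$. For the inductive step write $T = T_1 \oplus T_2$ with $|T_1| = n_1 \le n_2 = |T_2|$. If $n_1 = 1$, then Lemma~\ref{lm:cater} applies directly and gives $T = C_n = J_{0,n}$, so we are done. So assume $2 \le n_1 \le n_2$. By Lemma~\ref{lm:largedecomp}(1), $\mathcal{D}(T) = D_1 \cup D_2$, and since this union has size one we need $D_1 = D_2$, which by Lemma~\ref{lm:largedecomp}(2) forces $T_1 = T_2$. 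Then $|\mathcal{D}(T)| = |D_1| = |\mathcal{D}(T_1)|$, so $|\mathcal{D}(T_1)| = 1$, and by the inductive hypothesis $T_1 = J_{k,\ell}$ for some $(k,\ell)$. Hence $T = J_{k,\ell} \oplus J_{k,\ell} = J_{k+1,\ell}$, completing the induction.

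The main subtlety — and the one step I would be careful about — is the boundary between the two cases $n_1 = 1$ and $n_1 \ge 2$, and making sure the recursion bottoms out correctly: when $n_1 \ge 2$ and $T_1 = T_2$, the tree $T_1$ has fewer leaves than $T$, so induction applies, but one must confirm that a jellyfish composed with itself is again a jellyfish, i.e. $J_{k,\ell} \oplus J_{k,\ell} = J_{k+1,\ell}$, which is exactly the recursive structure $B_{k+1} = B_k \oplus B_k$ underlying the definition. I expect no real obstacle here; the proof is a clean induction once Lemma~\ref{lm:largedecomp} and Lemma~\ref{lm:cater} are in hand. The only thing to watch is not to overlook that $C_n$ itself must be exhibited as a jellyfish ($C_n = J_{0,n}$) so that the $n_1 = 1$ branch genuinely lands inside the claimed family.
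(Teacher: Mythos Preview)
Your proof is correct and takes a genuinely different route from the paper's. For the ``if'' direction the paper argues directly via automorphisms of $B_k$: removing any leaf of $J_{k,\ell}$ shortens one of the $2^k$ pendant caterpillars to $C_{\ell-1}$, and the vertex-transitivity of $B_k$ on its leaves makes all such deletions isomorphic. For the ``only if'' direction the paper does not induct; instead it takes $k$ maximal so that every vertex within distance $k$ of the root has two children, lets $T_1,\dots,T_{2^k}$ be the subtrees hanging below depth $k$, and argues (by comparing deletions in different $T_i$) that all $T_i$ have the same size and each has a leaf adjacent to its root, whence Lemma~\ref{lm:cater} forces each $T_i$ to be a caterpillar.

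Your approach is cleaner: both directions become short inductions that reuse Lemmas~\ref{lm:largedecomp} and~\ref{lm:cater} rather than introducing a new structural argument. The one step you should make explicit is the equality $|D_1| = |\mathcal{D}(T_1)|$: you need that $T^\star \mapsto T_1 \oplus T^\star$ is injective on $\mathcal{D}(T_1)$, which holds because $|T^\star| = |T_1|-1 < |T_1|$ forces the root-split of $T_1 \oplus T^\star$ to identify which factor is $T_1$. With that line added, your argument is complete. The paper's version has the advantage of revealing the structure of $T$ all at once (you \emph{see} the complete binary top and the caterpillar legs), while yours has the advantage of being shorter and requiring nothing beyond the lemmas already in hand.
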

\begin{proof}
First we show that that $|\D(J_{k,\ell})| = 1$. 

Since $J_{0,\ell}= C_{\ell}$, for $\ell\ge 2$ we have
$\mathcal{D}(J_{0,\ell})=\D(C_{\ell})=\{C_{\ell-1}\}$,
so $|\D(J_{0,\ell})|=1$ for all $\ell\ge 2$. 
Consider $J_{k,\ell}$ such that $k>0$ and $\ell \ge 2$. The vertices at distance at most $k$ from the root form a complete binary tree $B_k$. Let $v_1,\ldots, v_{2^k}$ be an enumeration of the vertices of $J_{k,\ell}$ at distance $k$ from the root. Removing a leaf from $J_{k,\ell}$ is removing a leaf $u_i$ of one of the $C_{\ell}$-s rooted at a some $v_i$, resulting in a $C_{\ell-1}$ rooted at that $v_i$. If $x,y$ are leaves of $B_k$ then there is an automorphism of $B_k$ that maps $x$ to $y$. We can extend this automorphism to an isomorphism between $J_{k,\ell} - u_i$ and $J_{k,\ell}-u_j$ for any choice of leaves $u_i$ and $u_j$. Hence $|\D(J_{k,\ell})| = 1$. 

Next we show that if $T$ is a tree such that $|\D(T)|=1$, then $T= J_{k,\ell}$.

Assume $T$ is a tree with $|\D(T)|=1$.
Since $C_1$ is the only tree of size 1 and $\D(C_1) = \emptyset$, if $|\D(T)|=1$,
then $|T|\ge 2$. 
We will show there are $(k,\ell)$ with $k \ge 0, \ell \ge 2$ 
such that $J_{k,\ell}= T$. 

Since $|T|\ge 2$, the root has $2$ children. Let $k$ be maximal such that every node at distance at most $k$ from the root of $T$ has exactly two
children, and label the nodes at distance $k$ with $v_1,\ldots,v_{2^k}$.
Then the paths from the root of $T$ to each of the children of $v_1,\ldots, v_{2^k}$ form a $B_{k+1}$ subgraph 
of $T$, and there is a $t$ such that one of the children of $v_{t}$ is a leaf of $T$.
For $i\in[2^k]$ let $T_i$ denote the subtree of $T$
rooted at $v_i$ containing all nodes that are descendants of $v_i$.
Each $T_i$ has at least $2$ leaves and $T_{t}= C_1\oplus T_{t}^{\star}$ for some tree $T_t^*$.

Suppose $|T_i| \neq |T_j|$ for some $i,j$. Then removing a leaf of $T_i$ and removing a leaf of $T_j$ from $T$ must result in different trees, contradicting our assumption on $|\D(T)|$. Hence all $T_i$ have the same size $m$, where $m\ge 2$.

Suppose for $i$ we have that $T_i = A \oplus B$ where both $A$ 
and $B$ have size at least $2$. Then removing a leaf of $T_i$ 
from $T$ and removing the leaf-neighbor of $v_{t}$ results in different trees, contradicting our assumption on $|\D(T)|$. Thus, all $T_i$ are of the form $C_1\oplus T_{i}^{\star}$ for some $T_i^{\star}$ of size $m-1$. Since our assumption on $\D(T)$ implies that $|\D(T_i)|=1$, Lemma~\ref{lm:cater} yields $T_i= C_{m}$. Therefore $T= J_{k,m}$. 
\end{proof}

We next consider trees with few subtrees. We have that $S(C_n) = n$. In addition, the four-leaf tree $B_2$ also contains exactly four induced rooted binary trees. We show that caterpillars and $B_2$ are the only trees that achieve $\min_{T: |T|=n} S(T)$. Since the only trees of size at most three are caterpillars, we need only further discuss trees of size at least 5.

\begin{lemma}\label{lem:size1_4deck}
Let $T$ be a tree of size $n \ge 5$. Then $C_4 \in \D_4(T)$. Additionally, $\D_4(T) = \{C_4\}$ if and only if $T =C_n$. 
\end{lemma}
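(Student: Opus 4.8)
The plan is to prove both statements by a short structural induction on $n = |T|$, using the composition $T = T_1 \oplus T_2$ with $|T_1| = n_1 \le n_2 = |T_2|$ and the decomposition $\D_4(T) = \bigcup_{a+b=4} \D_a(T_1) \ast \D_b(T_2)$, where $\ast$ denotes forming $\oplus$ of trees on each side (with the convention $\D_0$ contributing nothing and $\D_a$ only meaningful for $a \le n_i$).

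First I would establish $C_4 \in \D_4(T)$. Since $|T| = n \ge 5$, the tree $T$ has a root-to-leaf path of length at least $3$, which realizes a $C_4$ as an induced subtree; more carefully, one can argue that any binary tree on $\ge 4$ leaves with a leaf at depth $\ge 3$ contains $C_4$, and every tree on $\ge 5$ leaves does have such a leaf (the only tree on $4$ leaves without one is $B_2$; any tree on $\ge 5$ leaves has height $\ge 3$ and hence a deepest leaf at depth $\ge 3$, and the path up from it together with the siblings picked off along the way gives a $C_4$). Alternatively, and perhaps cleaner for the induction, I would note $\D_4(C_5) = \D_4(B_2 \oplus C_1) \supseteq \{C_4\}$ directly and then observe that $C_4 \in \D_4(T)$ propagates upward: if $T' \in \D_4(\cdot)$-generating subtree of $T$ has $\ge 4$ leaves then so does $T$.

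For the characterization, the ``if'' direction is immediate since $\D_4(C_n) = \{C_4\}$ (every induced subtree of a caterpillar is a caterpillar). For ``only if'', suppose $|\D_4(T)| = 1$, so $\D_4(T) = \{C_4\}$ by the first part. Write $T = T_1 \oplus T_2$. If $n_1 \ge 2$, then $T_1$ has an induced $C_2$ and $T_2$ has (since $n_2 \ge 3$) an induced tree on $2$ leaves and on $3$ leaves; combining $C_2 \ast C_2 = B_2$ shows $B_2 \in \D_4(T)$, a contradiction unless we cannot form this — but $n_1 \ge 2$ and $n_2 \ge 3$ always allows it. Hence $n_1 = 1$, i.e., $T = C_1 \oplus T_2$ with $|T_2| = n - 1 \ge 4$. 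Now $\D_4(T) \supseteq \D_4(T_2)$ and also $\supseteq \{C_1 \oplus T' : T' \in \D_3(T_2)\} = \{C_4\}$ forces $\D_3(T_2) = \{C_3\}$, and $\D_4(T_2) = \{C_4\}$. If $n - 1 = 4$, then $\D_4(T_2) = \{T_2\} = \{C_4\}$, so $T_2 = C_4$ and $T = C_5$. If $n - 1 \ge 5$, apply the induction hypothesis to $T_2$: $\D_4(T_2) = \{C_4\}$ gives $T_2 = C_{n-1}$, hence $T = C_1 \oplus C_{n-1} = C_n$.

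The main obstacle is the base-case bookkeeping: I must handle $n = 5$ (and implicitly the sub-case $n_2 = 4$ inside the induction) without circularity, and I must be careful that the step ``$n_1 \ge 2 \Rightarrow B_2 \in \D_4(T)$'' genuinely always goes through — it does, because $n_1 \ge 2$ gives an induced $C_2$ in $T_1$ and $n_2 \ge n - n_1 \ge 3$ gives an induced $C_2$ in $T_2$, and $C_2 \oplus C_2 = B_2 \ne C_4$. A secondary subtlety is making precise the claim $C_4 \in \D_4(T)$ for all $T$ of size $\ge 5$; I would prove it by the induction as well, observing that for $T = C_1 \oplus T_2$ with $|T_2| \ge 4$ we get $C_4 \in \D_4(T_2) \subseteq \D_4(T)$ (using the first part for $T_2$, or $T_2 = C_4$ directly when $|T_2| = 4$ since then $B_2$ also has $C_4$... wait $\D_4(B_2) = \{B_2\}$), so in fact I need the depth argument for the genuine base: every tree on $\ge 5$ leaves has a leaf at depth $\ge 3$, yielding $C_4$. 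That elementary depth lemma is the one piece worth stating explicitly.
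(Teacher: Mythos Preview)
Your argument for the characterization is essentially the paper's: force $n_1 = 1$ via the observation that two leaves from each side yield $C_2 \oplus C_2 = B_2$, then induct on $T_2$ (with the $|T_2| = 4$ case handled directly). That part is fine.

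Where you diverge is in proving $C_4 \in \D_4(T)$, and here the paper's route is considerably cleaner than your depth argument. The paper simply notes that with $T = T_1 \oplus T_2$ and $|T_1| \le |T_2|$ one has $|T_2| \ge 3$; picking one leaf of $T_1$ and any three leaves of $T_2$ gives an induced $C_1 \oplus C_3 = C_4$, since $C_3$ is the unique $3$-leaf binary tree. This one-liner replaces your height lemma (``every size-$\ge 5$ tree has a leaf at depth $\ge 3$'') and the sibling-peeling construction, and it avoids the false starts in your write-up (the aborted induction and the $\D_4(B_2) = \{B_2\}$ worry). Your depth argument is correct, but the paper's observation is what you were missing to make the proof short.
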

\begin{proof}
Let $T = T_1 \oplus T_2$, where we assume without loss of generality that $|T_1| \le |T_2|$.  In particular, $|T_2| \ge 3$. Any induced subtree formed from one leaf of $T_1$ and three leaves of $T_2$ is isomorphic to $C_4$. Hence, $C_4\in \mathcal{D}_4(T)$.

If $T = C_n$, then $\D_4(T) = \{C_4\}$. To finish the proof, we use induction to show that if $T$ is a tree of size $n \ge 4$ and $\D_4(T) = \{C_4\}$, then $T= C_n$. If $|T| = 4$, then $\D_4(T) = \{T\}$, so $T = C_4$. Assume $n \ge 5$ and the statement is true for trees of size $n-1$. Let $T = T_1 \oplus T_2$. If $T_1$ and $T_2$ have more than one leaf, then the induced subtree of $T$ corresponding to two leaves from each of $T_1$ and $T_2$ is isomorphic to $B_2$. Hence, we can assume $T_1 = C_1$. Then $|T_2| = n-1 \ge 4$ so that $\D_4(T_2)$ is nonempty. But then $\D_4(T_2) = \{C_4\}$ and by the induction hypothesis, $T_2 = C_{n-1}$. Hence $T = C_n$. 
\end{proof}

Lemma~\ref{lem:size1_4deck} implies that if $n \ge 5$ and $T \neq C_n$, then $\vert \D_4(T) \vert = 2 = W_4$. On the other hand, if $n \ge 5$ and $|\D_4(T)| = 1$, then $T = C_n$.
We can extend this res result when $|\D_j(T)| = 1$ for larger $j$. 

\begin{lemma}\label{lem:size1_anydeck}
Let $T$ be a tree of size $n\geq 7$ and suppose $|\mathcal{D}_j(T)|=1$ for some $5\leq j\leq n-2$. Then $T= C_n$ and $|\mathcal{D}_j(T)|=1$ for all $1\leq j\leq n$. 
\end{lemma}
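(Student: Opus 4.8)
The plan is to induct on $n$, using the hypothesis $|\mathcal{D}_j(T)| = 1$ for some fixed $5 \le j \le n-2$ and deducing that $T$ must be a caterpillar. The base case is $n = 7$, where $j = 5$ is the only option; here I would argue directly (possibly invoking the six-tree verification style of Theorem~\ref{th:deck} or a short case analysis on the root-split of $T$) that the only size-$7$ tree with a singleton size-$5$ deck is $C_7$. For the inductive step, write $T = T_1 \oplus T_2$ with $|T_1| \le |T_2|$, so $|T_2| \ge \lceil n/2 \rceil \ge 4$.

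The key structural observations I would establish are the following. First, if both $T_1$ and $T_2$ have at least two leaves, I claim $|\mathcal{D}_j(T)| \ge 2$: one can pick a size-$j$ leaf set $S$ using two leaves from $T_1$ and the rest from $T_2$, versus a size-$j$ leaf set lying entirely (or as much as possible) inside $T_2$, and these induce trees with different root-splits — here is where I use $j \le n-2$ (so there is ``room'' to move leaves between the two sides) together with Lemma~\ref{lem:size1_4deck} to see that the induced subtrees genuinely differ. Hence I may assume $T_1 = C_1$, i.e., $T = C_1 \oplus T_2$ with $|T_2| = n-1 \ge 6$. Second, I would relate $\mathcal{D}_j(T)$ to $\mathcal{D}_j(T_2)$ and $\mathcal{D}_{j-1}(T_2)$: a size-$j$ induced subtree of $T$ either avoids the pendant leaf of $C_1$ (giving an element of $\mathcal{D}_j(T_2)$) or uses it (giving $C_1 \oplus T'$ with $T' \in \mathcal{D}_{j-1}(T_2)$). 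Since all these must coincide, in particular $|\mathcal{D}_j(T_2)| = 1$. Now $|T_2| = n-1 \ge 7$ and $5 \le j \le n-3 = |T_2| - 2$, so the induction hypothesis applies to $T_2$, giving $T_2 = C_{n-1}$, hence $T = C_n$.

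Once $T = C_n$ is established, the final clause $|\mathcal{D}_j(T)| = 1$ for all $1 \le j \le n$ is immediate, since every induced subtree of a caterpillar is a caterpillar, so $\mathcal{D}_j(C_n) = \{C_j\}$.

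\textbf{Main obstacle.} The delicate point is the first structural claim — showing that a nontrivial root-split of $T$ (i.e., $T_1$ with $\ge 2$ leaves) forces $|\mathcal{D}_j(T)| \ge 2$. One must be careful that the two chosen size-$j$ leaf sets really do induce nonisomorphic subtrees; the cleanest route is probably to track root-splits of the induced cards (as in Lemmas~\ref{lm:large} and~\ref{lm:smalldecomp}) and check that the reachable root-splits are not all equal, which requires the size bounds $5 \le j \le n-2$ to guarantee enough flexibility in distributing leaves. A secondary care point is verifying the $n=7$ base case cleanly; but with only a handful of size-$7$ trees this is a finite check, analogous to the size-$6$ verification already used in the proof of Theorem~\ref{th:deck}.
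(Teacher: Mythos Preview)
Your approach differs from the paper's, which does not induct on $n$ at all: instead it observes that $|\mathcal{D}_j(T)|=1$ forces $|\mathcal{D}_{j+1}(T)|=1$ (two distinct elements of $\mathcal{D}_{j+1}(T)$ would have distinct size-$j$ decks by Theorem~\ref{th:deck}, since $j+1\ge 6$), iterates up to $|\mathcal{D}_{n-1}(T)|=1$, invokes the jellyfish classification of Theorem~\ref{th:singleD}, and then checks that $|\mathcal{D}_{n-2}(J_{k,\ell})|\ge 2$ whenever $k\ge 1$. This reuses established machinery and avoids any root-split case analysis.

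Your inductive scheme, while workable in spirit, has a concrete gap. In the inductive step you assert ``$5\le j\le n-3=|T_2|-2$,'' but the hypothesis only gives $j\le n-2$. When $j=n-2$ you have $j=|T_2|-1$, so the induction hypothesis does \emph{not} apply to $T_2$ with parameter $j$, and this boundary case is simply dropped --- it is not covered by your base case $n=7$ nor by your first structural claim. The repair is already latent in your second observation: from $|\mathcal{D}_j(T)|=1$ with $T=C_1\oplus T_2$ you also obtain $|\mathcal{D}_{j-1}(T_2)|=1$, since the cards $C_1\oplus T'$ for $T'\in\mathcal{D}_{j-1}(T_2)$ must all coincide and $C_1\oplus T'= C_1\oplus T''$ with $|T'|=|T''|$ forces $T'= T''$. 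In the inductive step $n\ge 8$, the case $j=n-2$ gives $j\ge 6$, hence $5\le j-1\le |T_2|-2$, and applying the induction hypothesis to $T_2$ with parameter $j-1$ recovers $T_2=C_{n-1}$. You need to make this case split explicit; the obstacle you flagged (the root-split claim) is real but routine, whereas the $j=n-2$ case is the step that actually breaks as written.
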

\begin{proof}
It suffices to show $T=C_n$, as $|\mathcal{D}_j(C_n)|=1$ for all $1\leq j\leq n$. 

Suppose that $\mathcal{D}_{j+1}(T)$ contains two distinct trees $T_1$ and $T_2$. Since $j+1 \ge 6$, by Theorem~\ref{th:deck}, we have $T_1$ and $T_2$ have distinct size-$j$ decks. This contradicts our assumption that $|D_j(T)| = 1$. Thus, we have $|D_{j+1}(T)| = 1$. Furthermore, $|\mathcal{D}_{k}(T)|=1$ for all $j\leq k\leq n-1$. By Theorem~\ref{th:singleD}, $T= J_{k,\ell}$ for some $k \ge 0$ and $\ell \ge 2$. 
If $k \ge 1$, then removing two leaves from $J_{k,\ell}$ from the same $C_\ell$ or distinct $C_\ell$-s results in non-isomorphic trees. Thus $|D_{n-2}(T)| = 1$ implies $k = 0$ and $T = J_{0,n} = C_n$. 
\end{proof}

\begin{theorem} \label{thm:size1jdeck}
Let $T$ be a tree of size $n \ge 5$ and $|D_j(T) |= 1$ for some $4 \le j \le n-1$. Then $T = C_n$, or $j=n-1$ and $T = J_{k,\ell}$ for some $k \ge 0, \ell \ge 2$.
\end{theorem}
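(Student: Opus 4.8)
The plan is to combine the two special structural lemmas already established, namely Lemma~\ref{lem:size1_anydeck} (which handles $5 \le j \le n-2$) and Lemma~\ref{lem:size1_4deck} (which handles $j = 4$), with Theorem~\ref{th:singleD} (which describes exactly the trees with $|\mathcal{D}(T)| = |\mathcal{D}_{n-1}(T)| = 1$, namely the jellyfish $J_{k,\ell}$). The idea is that the three cases $j = 4$, $5 \le j \le n-2$, and $j = n-1$ partition the range $4 \le j \le n-1$ (for $n \ge 7$; the cases $n = 5, 6$ need to be checked, but there the range $4 \le j \le n-1$ is short and already covered by $j=4$ or $j = n-1$).

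First I would dispose of the case $j = n-1$: this is exactly the hypothesis $|\mathcal{D}(T)| = 1$, so Theorem~\ref{th:singleD} gives $T = J_{k,\ell}$ for some $k \ge 0$, $\ell \ge 2$, which is the second alternative in the conclusion (and includes $T = C_n = J_{0,n}$ as a subcase). Next, the case $j = 4$: Lemma~\ref{lem:size1_4deck} directly says that $|\mathcal{D}_4(T)| = 1$ forces $T = C_n$ (valid for all $n \ge 4$, in particular $n \ge 5$). Finally, for $5 \le j \le n-2$: this requires $n \ge 7$ for the range to be nonempty, and Lemma~\ref{lem:size1_anydeck} immediately gives $T = C_n$. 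So in every case the conclusion holds.

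The main thing to be careful about is the bookkeeping of small cases and making sure the three sub-ranges genuinely cover $\{4, 5, \ldots, n-1\}$: for $n = 5$ the range is $\{4\}$; for $n = 6$ the range is $\{4, 5\}$, and $j = 5 = n-1$ is the jellyfish case; for $n \ge 7$ we have $\{4\} \cup \{5, \ldots, n-2\} \cup \{n-1\}$. I should also note that when $j = n-1$ and it happens that $T$ is forced to be a caterpillar (e.g.\ if additionally $k = 0$), the first alternative holds, but the statement only claims the disjunction, so writing ``$T = C_n$, or $j = n-1$ and $T = J_{k,\ell}$'' is fine since $C_n$ is itself a jellyfish. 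I do not anticipate a genuine obstacle here — the theorem is essentially an assembly of the preceding lemmas — so the writeup will be short: a one-line case split on which sub-range $j$ falls into, citing Lemma~\ref{lem:size1_4deck}, Lemma~\ref{lem:size1_anydeck}, and Theorem~\ref{th:singleD} respectively, with a remark that these sub-ranges exhaust $4 \le j \le n-1$.
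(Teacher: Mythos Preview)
Your proposal is correct and takes essentially the same approach as the paper's proof: split into the case $j = n-1$ (handled by Theorem~\ref{th:singleD}) and the case $4 \le j \le n-2$ (handled by Lemmas~\ref{lem:size1_4deck} and~\ref{lem:size1_anydeck}). Your bookkeeping of the small cases $n\in\{5,6\}$ is more explicit than the paper's, but the underlying argument is identical.
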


\begin{proof}
In the case $j=n-1$ and $n\geq 5$, Theorem~\ref{th:singleD} implies $T=J_{k,\ell}$ for some $k \ge 0,\ell \ge 2$. In the case $4\leq j< n-2$, Lemmas~\ref{lem:size1_4deck} and~\ref{lem:size1_anydeck} imply $T = C_n$.
\end{proof}

For a tree to achieve $\min_{T:|T| = n} S(T)$, we require $|D_j(T)|= 1$ for all $1 \le j \le n$. From Theorem~\ref{thm:size1jdeck}, we get the following corollary.

\begin{corollary}
For any $n$, we have $\min_{T:|T| = n} S(T) = n$. Equality is achieved only for $B_2$ and caterpillars $C_n$.
\end{corollary}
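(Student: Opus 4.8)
The plan is to combine the characterization results already established with a counting argument on the number of subtrees. Recall that $S(T) = \sum_{j=1}^{n} |\mathcal{D}_j(T)|$, that $|\mathcal{D}_j(T)| \ge 1$ for every $j \in [n]$ (since $\mathcal{D}_j(T)$ is nonempty whenever $j \le n$), and hence $S(T) \ge n$ always. Thus the minimum value $n$ is achieved precisely when $|\mathcal{D}_j(T)| = 1$ for \emph{all} $j \in [n]$. The task reduces to identifying exactly which trees have this property.

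First I would dispose of the small cases. For $n \le 3$ the unique size-$n$ tree is the caterpillar $C_n$, and indeed $|\mathcal{D}_j(C_n)| = 1$ for all $j$, so these contribute only caterpillars. For $n = 4$, the two trees are $C_4$ and $B_2$; one checks directly that $\mathcal{D}_j(B_2) = \{C_j\}$ for $j \in \{1,2,3\}$ (the only size-$3$ subtree of $B_2$ is $C_3$) and $\mathcal{D}_4(B_2) = \{B_2\}$, so $S(B_2) = 4$, while $S(C_4) = 4$ as well. Note that $B_2 = J_{1,2}$, consistent with Theorem~\ref{th:singleD}.

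For $n \ge 5$, suppose $|\mathcal{D}_j(T)| = 1$ for all $j \in [n]$. In particular $|\mathcal{D}_j(T)| = 1$ for some (indeed every) $j$ with $4 \le j \le n-1$, so Theorem~\ref{thm:size1jdeck} applies: either $T = C_n$, or $T = J_{k,\ell}$ for some $k \ge 0, \ell \ge 2$ arising only through the $j = n-1$ case. It remains to check which jellyfish $J_{k,\ell}$ actually satisfy $|\mathcal{D}_j(T)| = 1$ for \emph{all} $j$, not just $j = n-1$. If $k = 0$ then $J_{0,\ell} = C_\ell$, already counted. If $k \ge 2$, then $J_{k,\ell}$ has a size-$4$ induced subtree $B_2$ (take two leaves from each of two caterpillars hanging off a common $B_2$-substructure near the root) as well as $C_4$, so $|\mathcal{D}_4(J_{k,\ell})| \ge 2$, contradiction. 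If $k = 1$ and $\ell \ge 3$, then $J_{1,\ell}$ again contains both $B_2$ (two leaves from each of its two pendant caterpillars) and $C_4$ as size-$4$ subtrees, so $|\mathcal{D}_4(J_{1,\ell})| \ge 2$, contradiction. The only surviving jellyfish is $J_{1,2} = B_2$, which has $n = 4$. Hence for $n \ge 5$ the only tree achieving $S(T) = n$ is $C_n$, and combining with the small cases gives that equality holds exactly for caterpillars and $B_2$.

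The main obstacle is the last step: Theorem~\ref{thm:size1jdeck} only guarantees $|\mathcal{D}_{n-1}(T)| = 1$ forces $T$ into the jellyfish family, but achieving the minimum of $S$ requires the \emph{stronger} condition that \emph{all} decks are singletons, so one must rule out the jellyfish $J_{k,\ell}$ with $(k,\ell) \neq (0,\cdot)$ and $(k,\ell)\neq(1,2)$ by exhibiting a small-size deck of cardinality at least two — the cleanest witness being $|\mathcal{D}_4| \ge 2$ via the presence of both $C_4$ and $B_2$. This is a short, explicit observation, so the proof as a whole is brief; everything heavier has already been done in Theorem~\ref{thm:size1jdeck} and Lemma~\ref{lem:size1_4deck}.
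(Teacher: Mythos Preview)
Your proof is correct and follows the paper's intended route: the paper simply states that the corollary follows from Theorem~\ref{thm:size1jdeck}, and your argument fills in exactly those details. One small efficiency remark: your final paragraph ruling out jellyfish via $|\mathcal{D}_4|\ge 2$ is redundant, since for $n\ge 6$ you may apply Theorem~\ref{thm:size1jdeck} with $j=4<n-1$, which forces $T=C_n$ directly (the jellyfish alternative requires $j=n-1$); and for $n=5$ the only jellyfish with $5=2^k\ell$ leaves is $J_{0,5}=C_5$.
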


\subsection{Trees containing many subtrees} \label{subsec:many subtrees}

We now consider trees with many nonisomorphic subtrees. We start by considering trees with decks that have maximum cardinality. 

\begin{definition}
For $n \ge 1$, we recursively define trees $Z_n$. For $1 \le n \le 3$, let $Z_n = C_n$. For $n > 3$, 
\[ Z_n = \begin{cases}
C_1\oplus Z_{n-1} &\text{if } n \equiv 0  \, (\textup{mod}\ 3) \\
C_2\oplus Z_{n-2} &\text{otherwise.}
\end{cases}. \]
\end{definition}

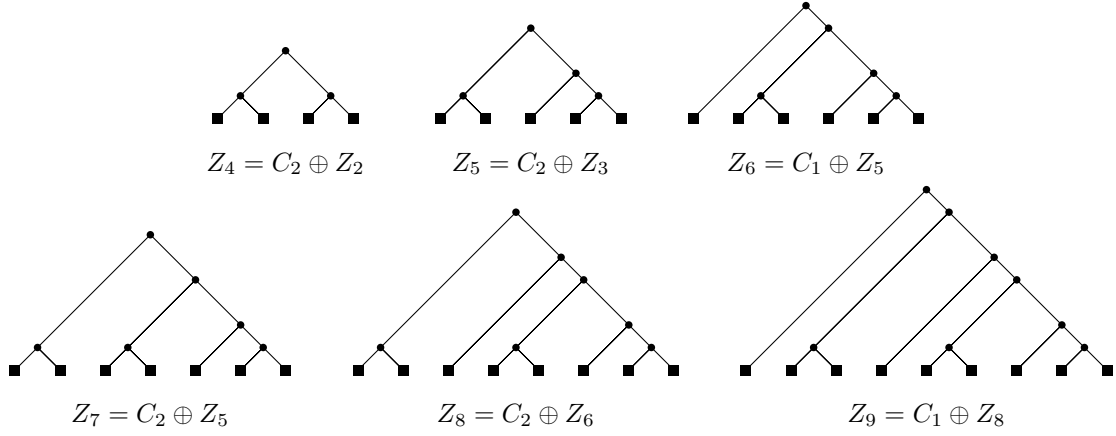
\begin{figure}%[h!]
    \centering
\begin{tikzpicture}[scale=0.6]
        \draw (2,0)-- (2.5,.5)--(3,0) -- (2.5,.5)--(3.5,1.5)-- (4.5,0.5) -- (4,0) -- (4.5,0.5) -- (5,0);  
        \node[fill=black,rectangle,inner sep=2pt]  at (2,0) {}; 
        \node[fill=black,rectangle,inner sep=2pt]  at (3,0) {}; 
        \node[fill=black,rectangle,inner sep=2pt]  at (4,0) {}; 
        \node[fill=black,rectangle,inner sep=2pt]  at (5,0) {};   
        \node[fill=black,circle,inner sep=1pt]  at (3.5,1.5) {};
        \node[fill=black,circle,inner sep=1pt]  at (4.5,0.5) {};
        \node[fill=black,circle,inner sep=1pt]  at (2.5,0.5) {};
        \node at (3.5,-1) {$Z_4=C_2\oplus Z_2$};
    \end{tikzpicture}
    \qquad                
\begin{tikzpicture}[scale=0.6]
        \draw (3,2) -- (1,0) -- (1.5,0.5) -- (2,0) -- (1.5,0.5) -- (3,2) -- (4,1) -- (3,0) -- (4,1) -- (4.5,0.5) -- (4,0) -- (4.5,0.5) -- (5,0);  
        \node[fill=black,rectangle,inner sep=2pt]  at (1,0) {};      
        \node[fill=black,rectangle,inner sep=2pt]  at (2,0) {}; 
        \node[fill=black,rectangle,inner sep=2pt]  at (3,0) {}; 
        \node[fill=black,rectangle,inner sep=2pt]  at (4,0) {}; 
        \node[fill=black,rectangle,inner sep=2pt]  at (5,0) {};   
        \node[fill=black,circle,inner sep=1pt]  at (3,2) {};
        \node[fill=black,circle,inner sep=1pt]  at (4.5,0.5) {};\node[fill=black,circle,inner sep=1pt]  at (1.5,0.5) {};
        \node[fill=black,circle,inner sep=1pt]  at (4,1) {};
         \node at (3,-1) {$Z_5=C_2\oplus Z_3$};           
    \end{tikzpicture}
    \qquad         
    \begin{tikzpicture}[scale=0.6]
        \draw (0,0) -- (2.5,2.5) -- (3,2) -- (1,0) -- (1.5,0.5) -- (2,0) -- (1.5,0.5) -- (3,2) -- (4,1) -- (3,0) -- (4,1) -- (4.5,0.5) -- (4,0) -- (4.5,0.5) -- (5,0);
        \node[fill=black,rectangle,inner sep=2pt]  at (0,0) {};      
        \node[fill=black,rectangle,inner sep=2pt]  at (1,0) {};      
        \node[fill=black,rectangle,inner sep=2pt]  at (2,0) {}; 
        \node[fill=black,rectangle,inner sep=2pt]  at (3,0) {}; 
        \node[fill=black,rectangle,inner sep=2pt]  at (4,0) {}; 
        \node[fill=black,rectangle,inner sep=2pt]  at (5,0) {};   
        \node[fill=black,circle,inner sep=1pt]  at (2.5,2.5) {};
        \node[fill=black,circle,inner sep=1pt]  at (3,2) {};
        \node[fill=black,circle,inner sep=1pt]  at (4.5,0.5) {};\node[fill=black,circle,inner sep=1pt]  at (1.5,0.5) {};
        \node[fill=black,circle,inner sep=1pt]  at (4,1) {};
         \node at (2.5,-1) {$Z_6=C_1\oplus Z_5$};           
    \end{tikzpicture}
%      \qquad 
\\
    \begin{tikzpicture}[scale=0.6]
        \draw (-1,0) -- (-.5,.5)--(0,0)--(-.5,.5)--(2,3) -- (2.5,2.5) -- (3,2) -- (1,0) -- (1.5,0.5) -- (2,0) -- (1.5,0.5) -- (3,2) -- (4,1) -- (3,0) -- (4,1) -- (4.5,0.5) -- (4,0) -- (4.5,0.5) -- (5,0);
        \node[fill=black,rectangle,inner sep=2pt]  at (-1,0) {};  
        \node[fill=black,rectangle,inner sep=2pt]  at (0,0) {};      
        \node[fill=black,rectangle,inner sep=2pt]  at (1,0) {};      
        \node[fill=black,rectangle,inner sep=2pt]  at (2,0) {}; 
        \node[fill=black,rectangle,inner sep=2pt]  at (3,0) {}; 
        \node[fill=black,rectangle,inner sep=2pt]  at (4,0) {}; 
        \node[fill=black,rectangle,inner sep=2pt]  at (5,0) {};   
        \node[fill=black,circle,inner sep=1pt]  at (-.5,.5) {};
        \node[fill=black,circle,inner sep=1pt]  at (2,3) {};
        \node[fill=black,circle,inner sep=1pt]  at (3,2) {};
        \node[fill=black,circle,inner sep=1pt]  at (4.5,0.5) {};\node[fill=black,circle,inner sep=1pt]  at (1.5,0.5) {};
        \node[fill=black,circle,inner sep=1pt]  at (4,1) {};
          \node at (2,-1) {$Z_7=C_2\oplus Z_5$};          
    \end{tikzpicture}
    \qquad 
    \begin{tikzpicture}[scale=0.6]
        \draw (-2,0) -- (-1.5,0.5) -- (-1,0) -- (-1.5,0.5) -- (1.5,3.5) -- (2.5,2.5) -- (0,0) -- (2.5,2.5) -- (3,2) -- (1,0) -- (1.5,0.5) -- (2,0) -- (1.5,0.5) -- (3,2) -- (4,1) -- (3,0) -- (4,1) -- (4.5,0.5) -- (4,0) -- (4.5,0.5) -- (5,0);
        \node[fill=black,rectangle,inner sep=2pt]  at (-2,0) {};      
        \node[fill=black,rectangle,inner sep=2pt]  at (-1,0) {};  
        \node[fill=black,rectangle,inner sep=2pt]  at (0,0) {};      
        \node[fill=black,rectangle,inner sep=2pt]  at (1,0) {};      
        \node[fill=black,rectangle,inner sep=2pt]  at (2,0) {}; 
        \node[fill=black,rectangle,inner sep=2pt]  at (3,0) {}; 
        \node[fill=black,rectangle,inner sep=2pt]  at (4,0) {}; 
        \node[fill=black,rectangle,inner sep=2pt]  at (5,0) {};   
        \node[fill=black,circle,inner sep=1pt]  at (2.5,2.5) {};
        \node[fill=black,circle,inner sep=1pt]  at (3,2) {};
        \node[fill=black,circle,inner sep=1pt]  at (4.5,0.5) {};
        \node[fill=black,circle,inner sep=1pt]  at (1.5,0.5) {};
        \node[fill=black,circle,inner sep=1pt]  at (-1.5,0.5) {};
        \node[fill=black,circle,inner sep=1pt]  at (1.5,3.5) {};
        \node[fill=black,circle,inner sep=1pt]  at (4,1) {};
        \node at (1.5,-1) {$Z_8=C_2\oplus Z_6$};            
    \end{tikzpicture}
      \qquad 
    \begin{tikzpicture}[scale=0.6]
        \draw (-3,0)--(1,4)--(1.5,3.5)--(-2,0) -- (-1.5,0.5) -- (-1,0) -- (-1.5,0.5) -- (1.5,3.5) -- (2.5,2.5) -- (0,0) -- (2.5,2.5) -- (3,2) -- (1,0) -- (1.5,0.5) -- (2,0) -- (1.5,0.5) -- (3,2) -- (4,1) -- (3,0) -- (4,1) -- (4.5,0.5) -- (4,0) -- (4.5,0.5) -- (5,0);
        \node[fill=black,rectangle,inner sep=2pt]  at (-3,0) {};                  
        \node[fill=black,rectangle,inner sep=2pt]  at (-2,0) {};      
        \node[fill=black,rectangle,inner sep=2pt]  at (-1,0) {};  
        \node[fill=black,rectangle,inner sep=2pt]  at (0,0) {};      
        \node[fill=black,rectangle,inner sep=2pt]  at (1,0) {};      
        \node[fill=black,rectangle,inner sep=2pt]  at (2,0) {}; 
        \node[fill=black,rectangle,inner sep=2pt]  at (3,0) {}; 
        \node[fill=black,rectangle,inner sep=2pt]  at (4,0) {}; 
        \node[fill=black,rectangle,inner sep=2pt]  at (5,0) {};   
          \node[fill=black,circle,inner sep=1pt]  at (1,4) {};         
        \node[fill=black,circle,inner sep=1pt]  at (2.5,2.5) {};
        \node[fill=black,circle,inner sep=1pt]  at (3,2) {};
        \node[fill=black,circle,inner sep=1pt]  at (4.5,0.5) {};
        \node[fill=black,circle,inner sep=1pt]  at (1.5,0.5) {};
        \node[fill=black,circle,inner sep=1pt]  at (-1.5,0.5) {};
        \node[fill=black,circle,inner sep=1pt]  at (1.5,3.5) {};
        \node[fill=black,circle,inner sep=1pt]  at (4,1) {};
          \node at (1,-1) {$Z_9=C_1\oplus Z_8$};          
    \end{tikzpicture}
  
    \caption{The trees $Z_4$, $Z_5$, $Z_6$, $Z_7$, $Z_8$ and $Z_9$.}
    \label{fig:maxdecksize}
\end{figure}

See Figure~\ref{fig:maxdecksize} for small examples of the trees $Z_n$. We show that the trees $Z_n$ achieve $\max_{T:|T|=n} |\D(T)|$ and identify this maximum value.

\begin{theorem}\label{thm:maxdecksize}
For $n \ge 1$ let
$$ g(n)=\begin{cases}
n-1, & \text{if } n\in\{1,2\} \\

2\lfloor\frac{n}{3}\rfloor-1, & \text{if } n\ge 3, \,  n\equiv 0,1 \ (\textup{mod}\ 3) \\

2\lfloor\frac{n}{3}\rfloor, & \text{if } n\ge 3, \,  n\equiv 2 \ (\textup{mod}\ 3)
\end{cases} $$

Then we have the following: 

\begin{enumerate}[label={(\arabic*)}]
\item\label{case:UB} $|\mathcal{D}(T)|\le g(|T|)$ for any tree $T$.
\item\label{case:LB}  For any $n$, we have $|Z_n|=n$ and $|\mathcal{D}(Z_n)|=g(n)$.
\item\label{case:restrict} If $T$ is a tree with $|\mathcal{D}(T)|=g(|T|)$ and $3\mid n$, then $T= C_1\oplus T'$ for some tree $T'$.
\end{enumerate}
In particular, $g(n)$ is the maximum size of the deck of a size-$n$ tree.
\end{theorem}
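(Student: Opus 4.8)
The plan is to prove the three parts of Theorem~\ref{thm:maxdecksize} together, using strong induction on $n=|T|$, where the recursive structure of the operation $\oplus$ drives everything. The base cases $n\le 3$ are immediate since all such trees are caterpillars with $|\mathcal D(C_n)|=1$ for $n\ge 2$, matching $g(1)=0$, $g(2)=1$, $g(3)=1$. For the inductive step, write $T=T_1\oplus T_2$ with $|T_1|=n_1\le n_2=|T_2|$ and $n_1+n_2=n$. I would split into the case $n_1=1$ and the case $n_1\ge 2$, since Lemmas~\ref{lm:smalldecomp} and~\ref{lm:largedecomp} describe $\mathcal D(T)$ quite differently in these two regimes.

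For the upper bound~\ref{case:UB}: when $n_1\ge 2$, Lemma~\ref{lm:largedecomp}(1) gives $\mathcal D(T)=D_1\cup D_2$ with $|D_i|=|\mathcal D(T_{3-i})|\le g(n_{3-i})$, so $|\mathcal D(T)|\le g(n_1)+g(n_2)$, and I would check that $g(n_1)+g(n_2)\le g(n)$ for all $2\le n_1\le n_2$ with $n_1+n_2=n$ — a finite-type verification using the three cases in the definition of $g$ and the near-linearity $g(n)\approx \tfrac{2n}{3}$; the worst case is $n_1=n_2$, which one checks directly. When $n_1=1$, Lemma~\ref{lm:smalldecomp} gives $\mathcal D(T)=\{T_2\}\cup D_2$ with $|D_2|=|\mathcal D(T_2)|\le g(n-1)$, so $|\mathcal D(T)|\le g(n-1)+1$, and I would verify $g(n-1)+1\le g(n)$ unless $n\equiv 0\pmod 3$ — the one case where $g(n)=g(n-1)$, so there $|\mathcal D(T)|\le g(n-1)+1=g(n)+1$ and I must argue the bound $g(n)$ still holds. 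This is exactly where the $C_1\oplus T_2$ structure is forced: if $n\equiv 0\pmod 3$ then $n-1\equiv 2\pmod 3$, and I claim $|\mathcal D(T_2)|=g(n-1)$ forces $T_2\not\ni\{\text{leaf at distance }1\}$ is impossible — rather, one shows that when $T_2$ meets the bound $g(n-1)$ with $n-1\equiv 2$, then $T_2\in D_2$ (i.e.\ $T_2$ has a leaf adjacent to its root), so the union $\{T_2\}\cup D_2$ loses one element and $|\mathcal D(T)|\le g(n-1)=g(n)$; and if $n_1\ge 2$ one checks $g(n_1)+g(n_2)\le g(n)$ strictly, ruling that case out for equality. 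This simultaneously proves~\ref{case:restrict}.

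For~\ref{case:LB}, I would verify $|Z_n|=n$ by an easy induction from the recursive definition, then compute $|\mathcal D(Z_n)|$ by the same case split: for $n\equiv 1,2\pmod 3$ we have $Z_n=C_2\oplus Z_{n-2}$, so by Lemma~\ref{lm:largedecomp} $|\mathcal D(Z_n)|=1+|\mathcal D(Z_{n-2})|$ (using part~(3): the unique card of the form $C_1\oplus T^\star$ plus the cards $C_2\oplus T'$ for $T'\in\mathcal D(Z_{n-2})$, and these are disjoint since their root-splits differ), and for $n\equiv 0\pmod 3$ we have $Z_n=C_1\oplus Z_{n-1}$ with $n-1\equiv 2$, so by Lemma~\ref{lm:smalldecomp} $|\mathcal D(Z_n)|=|\mathcal D(Z_{n-1})|$ provided $Z_{n-1}$ has a leaf at distance $1$ from its root — which holds because $Z_{n-1}=C_2\oplus Z_{n-3}$ and the $C_2$ contributes such a leaf. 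Matching these recursions against the definition of $g$ gives $|\mathcal D(Z_n)|=g(n)$.

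The main obstacle is the equality analysis in the $n\equiv 0\pmod 3$ case of the upper bound: I must show that whenever $T=C_1\oplus T_2$ attains $|\mathcal D(T)|=g(n)$, the "+1" from the extra card $T_2$ is absorbed — equivalently, that a tree $T_2$ of size $n-1\equiv 2\pmod 3$ achieving $|\mathcal D(T_2)|=g(n-1)$ necessarily has a leaf at distance $1$ from its root. I expect this to follow by feeding the extremal $T_2$ back into the induction: unpacking its own decomposition and using that the bound is tight only in specific configurations (essentially forcing $T_2=C_2\oplus(\text{something})$ up to the relevant structure), but pinning this down cleanly — perhaps via a separate lemma characterizing the extremal trees, or by strengthening the induction hypothesis to also track whether the extremal tree has a root-adjacent leaf — is the delicate part. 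Everything else reduces to bounded arithmetic comparisons among the piecewise-linear values of $g$.
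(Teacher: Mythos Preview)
Your overall architecture is the same as the paper's --- strong induction on $n$, split by root decomposition, and arithmetic comparison of $g$-values --- but you have the residue classes systematically swapped, and this creates a genuine gap.

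The overflow case for $T=C_1\oplus T_2$ is $n\equiv 1\pmod 3$, not $n\equiv 0$. Indeed $g(3k)=g(3k+1)=2k-1$ and $g(3k+2)=2k$, so $g(n)=g(n-1)$ precisely when $n\equiv 1\pmod 3$; for $n\equiv 0$ one has $g(n-1)+1=g(n)$ and there is nothing to fix. With the correct residue, $|T_2|=n-1\equiv 0\pmod 3$, and the induction hypothesis~\ref{case:restrict} applied to $T_2$ says exactly that if $|\mathcal D(T_2)|=g(n-1)$ then $T_2=C_1\oplus T_3$, whence $T_2\in D_2$ by Lemma~\ref{lm:smalldecomp} and $|\mathcal D(T)|=|\mathcal D(T_2)|\le g(n-1)=g(n)$. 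So no strengthening of the hypothesis is needed: part~\ref{case:restrict} \emph{is} the extra tracking you thought you had to add. Conversely, your proposed auxiliary claim --- that an extremal tree of size $\equiv 2\pmod 3$ must have a root-adjacent leaf --- is false: $Z_5=C_2\oplus C_3$ has $|\mathcal D(Z_5)|=2=g(5)$ but both children of its root are internal.

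The same swap breaks your computation in~\ref{case:LB}: for $n\equiv 0\pmod 3$, $Z_{n-1}=C_2\oplus Z_{n-3}$ does \emph{not} have a leaf at distance $1$ from the root (both root-children are internal), so Lemma~\ref{lm:smalldecomp} gives $|\mathcal D(Z_n)|=1+|\mathcal D(Z_{n-1})|=1+g(n-1)=g(n)$, not $|\mathcal D(Z_{n-1})|$ as you wrote. Once the residues are corrected throughout, your argument goes through and is essentially the paper's proof; the paper's only organizational difference is that in the $n_1=1$ case it strips all leading $C_1$'s at once (writing $T=(C_1\oplus)^i T_2$ with $T_2\neq C_1\oplus T'$), which sidesteps the explicit appeal to the ``leaf at distance $1$'' clause of Lemma~\ref{lm:smalldecomp}.
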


\begin{proof}
We will prove \ref{case:UB}--\ref{case:restrict} by induction on $n$. Note that $g(n)$ is increasing. 
Using Figure~\ref{fig:smalltrees}, one can easily verify \ref{case:UB}--\ref{case:restrict} for $n \le 5$. 

Assume $n \ge 6$ and that \ref{case:UB}--\ref{case:restrict} hold for all trees of size smaller than $n$. We will consider possibilities based on the value of $n \mo{3}$. To this end, let $n = 3k+r$ where $k = \lfloor\frac{n}{3}\rfloor$ and $r\in\{0,1,2\}$. For ease of reference, we note that 
\[ g(3k) = g(3k+1) = 2k-1 \qquad \qquad g(3k+2) = 2k. \]

Let $T$ be a tree of size $n$. We will consider possible structures of $T$. For each, we compare a bound on the size of $\D(T)$ to $g(|T|)$. We will then be able to complete the inductive step for the different cases of $n \mo{3}$.

First, suppose $T =C_n$. Then $|\D(T)| = 1 = g(6)-2<g(n)$.

Next consider $T = C_2 \oplus T_2$, where $|T_2| = n-2 \ge 4$. Then by Lemma~\ref{lm:largedecomp} $|\mathcal{D}(T)|=1+|\mathcal{D}(T_2)|$. By the inductive hypothesis (1) applied to $T_2$, we have $|\D(T)| \le 1 + g(n-2)$ and that $|\D(T)| = 1+g(n-2)$ only if $|\D(T_2)| = g(n-2)$. Table~\ref{tab:maxdecksizetablec2} gives the cases for $|T|$. 

\begin{table}[h!]
    \centering
    \begin{tabular}{|c|c|c|c|}
        \hline 
        $|T|$ &  $|T_2|$ & $1+g(|T_2|)$ & $\le g(n)$? \\ 
         \hline 
        $3k$ &  $3(k-1)+1$ & $2k-2$ & $<$ \\ \hline
          $3k+1$ &  $3(k-1)+2$ & $2k-1$ & $=$ \\ \hline 
         $3k+2$ &  $3k$ & $2k$ & $=$ \\ \hline 
      \end{tabular}
    \caption{The cases of Theorem \ref{thm:maxdecksize} when $T= C_2\oplus T_2$.} 
    \label{tab:maxdecksizetablec2}
\end{table}

Next consider $T=T_1 \oplus T_2$, where $|T_1|,|T_2| \ge 3$. If $T_1= T_2$ then by Lemma~\ref{lm:largedecomp}, $|\mathcal{D}(T)|= |\mathcal{D}(T_1)|\le g( n/2)<g(n)-2$. If instead $T_1 \neq T_2$, then by Lemma~\ref{lm:largedecomp}, $|\mathcal{D}(T)|= |\mathcal{D}(T_1)|+|\mathcal{D}(T_2)|$. By the inductive hypothesis (1), $|\D(T)| \le g(|T_1|)+g(|T_2|)$.  Table~\ref{tab:maxdecksizetabletwolarge} gives the cases, up to symmetry, for $|T|, |T_1|, |T_2|$. 

\begin{table}[h!]
    \centering
        \begin{tabular}{|c|c|c|c|c|}
        \hline 
        $|T|$ & $|T_1|$ & $|T_2|$ & $g(|T_1|)+g(|T_2|)$ & $\le g(n)$? \\ \hline 
        $3k$ & $3j$ & $3(k-j)$ & $2k-2$ & $<$\\ \hline
        $3k$ & $3j+1$ & $3(k-j-1)+2$ & $2k-3$ & $<$\\ \hline
        $3k+1$ & $3j$ & $3(k-j)+1$ & $2k-2$  & $<$ \\ \hline 
        $3k+1$ & $3j+2$ & $3(k-j-1)+2$ & $2k-2$  & $<$ \\ \hline 
        $3k+2$ & $3j$ & $3(k-j)+2$ & $2k-1$  & $<$\\ \hline 
        $3k+2$ & $3j+1$ & $3(k-j)+1$ & $2k-2$  & $<$ \\ \hline 
    \end{tabular}
    \caption{The cases of Theorem \ref{thm:maxdecksize} when $T = T_1 \oplus T_2$, $|T_1|,|T_2|\ge 3$ and $T_1 \neq T_2$.} 
    \label{tab:maxdecksizetabletwolarge}
\end{table}

Finally, suppose there is an $i \ge 1$ such that $T = (C_1\oplus)^iT_2$ where $T_2\ne C_1\oplus T_3$ for any tree $T_3$. Then by Lemma~\ref{lm:smalldecomp} $|\D(T)| = 1 + |\D(T_2)|$. Since $|T_2| \ge 4$, by the inductive hypothesis (1), $|\D(T)| \le 1 + g(|T_2|)$ and $|\D(T)| = 1+g(|T_2|)$ only if $|\D(T_2)| = g(|T_2|)$. When considering cases, if $i = 1$, we have $|T_2| = n-1$ and if $i \ge 2$, we have $|T_2| \le n-2$. Table~\ref{tab:maxdecksizetablec1i} gives the relevant cases.

  \begin{table}[h!]
    \centering
    \begin{tabular}{|c|c|c|c|c|}
    \hline 
    $|T|$ & $i$ & $|T_2|$ & $1+g(|T_2|)$ & $\le g(n)$? \\ \hline
    $3k$ & $1$ & $3(k-1)+2$ & $2k-1$ & $=$ \\ \hline
    $3k$ & $\ge 2$ & $\le 3(k-1)+1$ & $\le 2k-2$ & $<$\\ \hline
    $3k+1$ & $1$ & $\star$ & $\star$& $\star$ \\ \hline 
    $3k+1$ & $\ge 2$ & $\le 3(k-1)+2$ & $\le 2k-1$ & $\le$ \\ \hline 
    $3k+2$ & $\ge 1$ & $\le 3k+1$ & $\le 2k$ & $\le$ \\ \hline 
    \end{tabular}
    \caption{The cases of Theorem \ref{thm:maxdecksize} when $T=(C_1\oplus)^iT_2$ for some $i\ge 1$ and $T_2\neq C_1\oplus T'$.} 
    \label{tab:maxdecksizetablec1i}
    \end{table}     

To complete our inductive step we consider cases based on the value of $n \mo{3}$. When $n = 3k$, we have (1). We get (2) from line one of Table~\ref{tab:maxdecksizetablec1i} and the inductive hypothesis for $Z_{n-1}$. Finally, we have $|\D(T)| < g(|T|)$ unless $T = C_1 \oplus T_2$, giving (\ref{case:restrict}).

When $n = 3k+1$, we can conclude $|\D(T)| \le g(n)$ except possibly when $T = C_1 \oplus T_2$ with $T_2 \neq C_1 \oplus T_3$ for any tree $T_3$ and with $|\D(T_2)| = g(3k)$. However, by \ref{case:restrict} such a tree does not exist, therefore \ref{case:UB} holds for $n=3k+1$. We get \ref{case:LB} from the line two of  Table~\ref{tab:maxdecksizetablec2} and the inductive hypothesis for $Z_{n-2}$.

When $n = 3k+2$, we have \ref{case:UB}. The induction hypothesis on $Z_{n-2}$ and the last line of Table~\ref{tab:maxdecksizetablec2} gives \ref{case:LB}.
\end{proof}

We now consider the maximum number of a subtrees of a tree of size $n$. Recall that, for a tree $T$, $S(T)$ is the number of subtrees $\sum_{i=1}^{|T|} |\D_i(T)|$ of $T$. For a natural number $n$, $S(n)$ is the maximum number of subtrees for a tree of size $n$, that is $S(n) = \max_{T:|T|=n} S(T)$. 

For any tree $T$, we must have that $|\D_i(T)| \le \min \left\{   \binom{n}{i}, W_i \right\}$. It follows that that $S(n) \le 2^n$. We improve this trivial upper bound in Theorem~\ref{thm:Snupperbound}. Recall (see Equation~\eqref{eqn:Wnboundsb}) that $W_i \sim \alpha \frac{b^i}{i^{3/2}}$, with $b \approx 2.4832$. 

\begin{theorem} \label{thm:Snupperbound}
The maximum number of non-isomorphic subtrees $S(n)$ is $O(1.8648^n)$.
\end{theorem}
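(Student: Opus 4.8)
The plan is to bound $S(n) = \sum_{i=1}^n |\mathcal{D}_i(T)|$ by splitting the sum at a carefully chosen threshold and using the fact that each term satisfies $|\mathcal{D}_i(T)| \le \min\{\binom{n}{i}, W_i\}$. For small $i$, the binomial coefficient $\binom{n}{i}$ is the smaller bound, and for large $i$, the Wedderburn--Etherington number $W_i$ is smaller; Lemma~\ref{lem:Wkbounds} tells us the crossover happens near $i = j_0$ with $j_0 \approx 0.685 n$. So I would write
\[ S(n) \le \sum_{i \le c n} \binom{n}{i} + \sum_{i > cn} W_i \]
for a threshold $c$ to be optimized, using the monotonicity facts that $\binom{n}{i}$ increases on $[0, n/2]$ and $W_i$ increases throughout, together with the asymptotic estimate $W_i \le C_2 b^i / i^{3/2}$ from Equation~\eqref{eqn:Wnboundsb} with $b \approx 2.4832$.

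The first step is to estimate the partial binomial sum. Using the standard entropy bound, for $c < 1/2$ we have $\sum_{i=0}^{\lfloor cn\rfloor} \binom{n}{i} \le 2^{H(c) n}$ where $H(c) = -c\log_2 c - (1-c)\log_2(1-c)$ is the binary entropy; this term is therefore $O(2^{(H(c)+o(1))n})$. The second step is to estimate the tail of the $W_i$ sum. Since $W_i \le C_2 b^i/i^{3/2}$ and $b > 1$, the sum $\sum_{cn < i \le n} W_i$ is dominated by its largest term $W_n$, so this contributes $O(b^{n(1+o(1))/n \cdot n})$ — more precisely, if we only go up to some $i \le dn$ the bound is $O(b^{dn})$, but since we must include $i$ up to $n$ in principle, I need to be more careful: actually the point is that for $i$ close to $n$, even $W_i$ is enormous, so the naive split does not immediately work. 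The resolution is that for $i$ near $n$, $|\mathcal{D}_i(T)|$ is bounded by $\binom{n}{i}$, which is small again near $i = n$. So the correct split is three-way: $\binom{n}{i}$ for $i \le cn$, then $W_i$ for $cn < i \le n - cn$ (or wherever $W_i < \binom{n}{i}$), then $\binom{n}{i}$ again for $i > n - $ something. But since $\binom{n}{i} = \binom{n}{n-i}$ while $W_i$ keeps growing, actually $W_i \le \binom{n}{i}$ fails for large $i$ — wait, Lemma~\ref{lem:Wkbounds} says $W_i \ge \binom{n}{i}$ for $i \ge j_0 \approx 0.685n$, so for large $i$ the binomial IS the smaller bound. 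Hence the two-way split $\sum_{i \le j_0}\binom{n}{i} + \sum_{i > j_0} \binom{n}{i}$ would just give $2^n$; the improvement must come from using $W_i$ on a middle range and $\binom{n}{i}$ on both ends. So I would split at two points $c_1 n$ and $c_2 n$ with $c_1 < c_2$, bound by $\binom{n}{i}$ on $[1, c_1 n] \cup [c_2 n, n]$ and by $W_i$ on $(c_1 n, c_2 n)$, then optimize.

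The optimization: the two binomial tails contribute roughly $2^{H(c_1)n} + 2^{H(1-c_2)n}$ (the second since $\sum_{i \ge c_2 n}\binom{n}{i} = \sum_{i \le (1-c_2)n}\binom{n}{i}$ when $c_2 > 1/2$), and the middle $W_i$-sum contributes roughly $b^{c_2 n}$ (dominated by its top term, up to polynomial factors). By symmetry considerations and balancing $2^{H(c_1)n} = 2^{H(1-c_2)n} = b^{c_2 n}$, one solves for $c_1, c_2$; plugging in $b \approx 2.4832$ and computing numerically should yield a common exponential rate of at most $1.8648^n$, which is the claimed bound. The main obstacle I anticipate is pinning down the exact optimizing constants and verifying that the resulting base is genuinely below $1.8648$ rather than merely close to it — this requires either a clean closed form for the critical $c_1, c_2$ (likely coming from the condition $H'(c_1) = \log_2 b$, i.e. $c_1/(1-c_1) = 1/b$, hence $c_1 = 1/(b+1) \approx 0.287$) or a careful numerical check, and ensuring the $o(1)$ and polynomial factors are absorbed correctly into the stated $O(1.8648^n)$.
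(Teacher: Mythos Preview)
Your core idea is right---bound each $|\mathcal{D}_i(T)|$ by $\min\{\binom{n}{i}, W_i\}$ and split the sum at the crossover point $j_0 \approx 0.685n$ from Lemma~\ref{lem:Wkbounds}---but you have the direction of the minimum backwards. You wrote that ``for small $i$, the binomial coefficient $\binom{n}{i}$ is the smaller bound, and for large $i$, the Wedderburn--Etherington number $W_i$ is smaller''; in fact it is the opposite. For small $i$ (say $i=3$) we have $W_i = 1$ while $\binom{n}{i}$ is polynomial in $n$; Lemma~\ref{lem:Wkbounds} says explicitly that $W_i \le \binom{n}{i}$ for $i \le j_0$ and $W_i \ge \binom{n}{i}$ for $i \ge j_0$. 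This reversal is what sends you into the unnecessary three-way split and the muddled optimization at the end (the condition $c_1 = 1/(b+1)$ has no role here).

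With the correct direction, the paper's argument is a clean two-way split:
\[
S(T) \;\le\; \sum_{i \le j_0} W_i \;+\; \sum_{i \ge j_0} \binom{n}{i},
\]
taking $j_0 \approx 0.685n$. The first sum is a geometric series dominated by $W_{j_0} = O(b^{0.6851 n})$, and since $b^{0.6851} \approx 1.8648$ this is $O(1.8648^n)$. The second sum is at most $n \cdot \binom{n}{\lfloor 0.685 n\rfloor}$, and the entropy bound $\binom{n}{cn} \le (c^{-c}(1-c)^{c-1})^n$ with $c = 0.685$ gives roughly $1.8646^n$. No third region is needed, and no balancing beyond choosing the crossover point $j_0$ (which Lemma~\ref{lem:Wkbounds} already locates) is required.
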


\begin{proof}

Let $T$ be a tree of size $n$. Then since $|\D_i(T)| \le \min\left\{   \binom{n}{i}, W_i \right\}$, using Lemma~\ref{lem:Wkbounds} we have 
\[ S(T) \le \sum_{i=1}^{n} \min\Bigl\{ \binom{n}{i}, W_i          \Bigl\}\leq \sum_{i=1}^{ \lceil 0.6851n\rceil } W_i +\sum_{i= \lfloor 0.685n\rfloor }^n  \binom{n}{i}. \]
By Equation~\eqref{eqn:Wnboundsb} and the sum of a geometric progression, the first sum is in $O(b^{0.6851n})$, which is in $O(1.8648^n)$. The second sum is at most $n\cdot \binom{n}{\lfloor 0.685n \rfloor }$. Recall from the proof of Lemma~\ref{lem:Wkbounds} that $\binom{n}{cn}\leq (c^{-c}(1-c)^{c-1})^n$. Hence the second sum is in $O(1.8646^n)$. 
\end{proof}

For a lower bound on $S(n)$, Dossou-Olory~\cite{fibonacci_trees} shows that $S(n) \in \Omega (1.40^n)$.
We improve this lower bound in Theorem~\ref{thm:Snlowerbound} using the following family of trees.

\begin{definition}
For $n \ge 1$ with $n \equiv 1 \mo{4}$, we recursively define trees $X_n$. Let $X_1 = C_1$. For $n \ge 5$, let $X_n =C_3 \oplus (C_1 \oplus X_{n-4})$. For $n \ge 2$ with $n \equiv 2 \mo{4}$, we define trees $Y_n$ by $Y_n = C_1 \oplus X_{n-1}$. 
\end{definition}

Observe that $X_n = C_3\oplus Y_{n-3}$. See Figure~\ref{fig:XYtrees} for the two representations of $X_n$. 

\begin{figure}%[h!]
\centering
\scalebox{0.6}{
    \begin{tikzpicture}
        \pic at (0,0) {tree3};
        \node[fill=black,rectangle,inner sep=2pt]  at (3,0) {};  
        \draw (6,2) -- (4,0) -- (8,0) -- (6,2) -- (4,4) -- (1,1);
        \draw (3,0) -- (5.5,2.5);
        \node at (6,0.5) {\Large{$X_{n-4}$}};

        \pic at (10,0) {tree3};
        \draw (15.5,2.5) -- (13,0) -- (18,0) -- (15.5,2.5) -- (14,4) -- (11,1);
        \node at (15.5,0.5) {\Large{$Y_{n-3}$}};
    \end{tikzpicture}
}
\caption{Two representations of $X_n$ for $n \equiv 1 \mo{4}$. 
} \label{fig:XYtrees}
\end{figure}
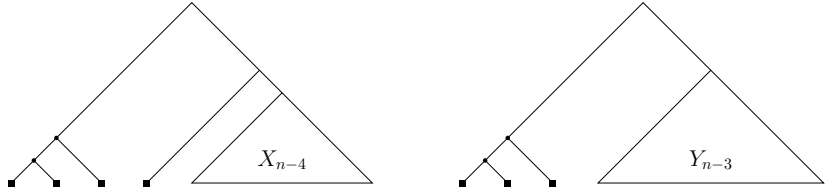

\begin{lemma} \label{lem:XandYsubtrees}
For any $n\geq 5$ with $n \equiv 1 \mo{4}$, the following recurrences hold:
\begin{equation}\label{eqn:recurrence}
\begin{split}
S(Y_{n+5}) & = 2\cdot S(X_{n+4})-S(Y_{n+1}),\\
S(X_{n+8}) & =4\cdot S(Y_{n+5})-S(X_{n+4})-2\cdot S(Y_{n+1}).
\end{split} 
\end{equation}
\end{lemma}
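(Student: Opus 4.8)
The plan is to expand the decks of the trees $X_n$ and $Y_n$ using the recursive decompositions $X_n = C_3 \oplus Y_{n-3}$ and $Y_n = C_1 \oplus X_{n-1}$ together with Lemmas~\ref{lm:largedecomp} and~\ref{lm:smalldecomp}, and to track how the various deck sizes $|\mathcal{D}_i|$ combine. Since $S(T) = \sum_{i=1}^{|T|} |\mathcal{D}_i(T)|$, I would first write down, for each of the trees in question, the total $S$ as a sum over the "left part" and "right part" sizes in the root-split, and then set up a system of recurrences that the sequences $S(X_{4j+1})$ and $S(Y_{4j+2})$ satisfy.

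First I would analyze $Y_n = C_1 \oplus X_{n-1}$. Every size-$i$ induced subtree of $Y_n$ either lies entirely in the $X_{n-1}$ part (contributing $\mathcal{D}_i(X_{n-1})$) or uses the pendant leaf together with an $(i-1)$-subtree of $X_{n-1}$ (contributing $C_1 \oplus \mathcal{D}_{i-1}(X_{n-1})$); one must be careful about the overlap, which by Lemma~\ref{lm:smalldecomp} occurs exactly when $X_{n-1}$ has a leaf adjacent to its root — and $X_{n-1} = C_3 \oplus Y_{n-4}$ does not, so there is no overlap in the relevant range (except possibly in low-order terms that get absorbed). This should give a clean relation like $S(Y_n) = S(X_{n-1}) + S'(X_{n-1})$ where $S'$ is a shifted count, which I then rewrite back in terms of $S$-values of smaller $X$'s and $Y$'s. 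Similarly, for $X_n = C_3 \oplus Y_{n-3}$ I would use Lemma~\ref{lm:largedecomp}: the deck splits as $\mathcal{D}(C_3 \oplus T^\star)$ over $T^\star \in \mathcal{D}(Y_{n-3})$ together with $C_3 \oplus \mathcal{D}(Y_{n-3})$-type contributions, and since $C_3$ is small every induced subtree of $C_3$ is a caterpillar $C_j$ with $j \le 3$, so the "left factor" contributes only a bounded menu $\{C_1, C_2, C_3\}$. Chasing this through all sizes $i$ yields a relation expressing $S(X_n)$ via $S(Y_{n-3})$ and lower-order corrections coming from the caterpillar overlaps.

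The heart of the argument is bookkeeping: I expect to introduce auxiliary quantities — e.g. the number of distinct subtrees of $X_n$ that happen to be caterpillars, or that have a given root-split — so that the "correction terms" arising from overlaps in Lemmas~\ref{lm:largedecomp} and~\ref{lm:smalldecomp} can themselves be expressed recursively. Once all the pieces are linear recurrences with constant coefficients in the two interleaved sequences $a_j := S(X_{4j+1})$ and $c_j := S(Y_{4j+2})$, eliminating the auxiliary sequences by substitution should collapse the system to exactly the two displayed identities $S(Y_{n+5}) = 2S(X_{n+4}) - S(Y_{n+1})$ and $S(X_{n+8}) = 4S(Y_{n+5}) - S(X_{n+4}) - 2S(Y_{n+1})$. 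A sanity check against the small trees $X_1, X_5, Y_6, \ldots$ (whose decks can be computed by hand, as in Figures~\ref{fig:smalltrees} and~\ref{fig:6trees}) would pin down that no boundary terms have been dropped.

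The main obstacle I anticipate is correctly handling the overlaps and the low-index degeneracies: when $i$ is small relative to the structure, an induced subtree sitting "in the $Y_{n-3}$ part" can coincide with one "straddling" the $C_3$, and similarly a subtree of $X_{n-1}$ with a near-root leaf can be double-counted in the $Y_n$ decomposition. Getting the inclusion–exclusion exactly right across the full range $i = 1, \ldots, n$ — rather than just asymptotically — is where the bulk of the care goes, and it is why the recurrence has the specific small coefficients $2, 4, 1, 2$ and the particular index shifts $+1, +4, +5, +8$ rather than something simpler. I would organize the proof by first stating a precise lemma describing $\mathcal{D}_i(C_3 \oplus T)$ and $\mathcal{D}_i(C_1 \oplus T)$ for arbitrary $T$ with no near-root leaf, then applying it mechanically.
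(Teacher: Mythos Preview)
Your high-level idea---decompose the subtrees of $Y_n$ and $X_n$ according to how leaves split across the root and apply inclusion--exclusion---is correct and is exactly what the paper does. But there is a concrete error in your overlap analysis, and your route via size-by-size decks and auxiliary sequences is more complicated than necessary.

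The error: for $Y_{n+5} = C_1 \oplus X_{n+4}$ you assert that $\mathcal{D}_i(X_{n+4})$ and $\{C_1 \oplus T : T \in \mathcal{D}_{i-1}(X_{n+4})\}$ have essentially no overlap ``because $X_{n+4} = C_3 \oplus Y_{n+1}$ has no leaf adjacent to its root.'' This confuses a property of $X_{n+4}$ with a property of its subtrees. Many subtrees of $X_{n+4}$ \emph{do} have the form $C_1 \oplus T''$---for instance $C_1 \oplus Y_{n+1}$ itself, obtained by keeping a single leaf of the $C_3$---and whenever $T''$ is also a subtree of $X_{n+4}$ you get a collision. This overlap is not a ``low-order term'': it is precisely the $-S(Y_{n+1})$ in the first recurrence, of the same exponential order as the main term. (Note also that Lemmas~\ref{lm:largedecomp} and~\ref{lm:smalldecomp} describe only the size-$(|T|-1)$ deck, so they do not say what you need about $\mathcal{D}_i$ for general $i$.)

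The paper sidesteps all the size-by-size bookkeeping by working with the set of \emph{all} subtrees at once. For $Y_{n+5}=C_1\oplus X_{n+4}$ it lets $\mathcal{A}_1=\{C_1\oplus T : T \text{ a subtree of } X_{n+4}\}$ and $\mathcal{A}_2$ its complement among subtrees of $Y_{n+5}$; each is in bijection with subtrees of $X_{n+4}$, and $\mathcal{A}_1\cap\mathcal{A}_2$ (subtrees of $X_{n+4}$ of the form $C_1\oplus T$) is put in bijection with subtrees of $Y_{n+1}$, giving the first identity immediately. For $X_{n+8}=C_3\oplus Y_{n+5}$ the paper partitions subtrees into $\mathcal{A}_0,\mathcal{A}_1,\mathcal{A}_2,\mathcal{A}_3$ according to how many leaves come from the $C_3$; each has size $S(Y_{n+5})$, the pairwise intersections among $\mathcal{A}_1,\mathcal{A}_2,\mathcal{A}_3$ are the singletons $\{C_i\oplus C_j\}$ and cancel against the higher-order terms in inclusion--exclusion, and the three remaining terms $|\mathcal{A}_0\cap\mathcal{A}_i|$ are identified by explicit bijections as $S(X_{n+4})$, $S(Y_{n+1})$, $S(Y_{n+1})$. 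No auxiliary sequences or elimination step is needed.
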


\begin{proof}
We start with first recurrence for $Y_{n+5} = C_1\oplus X_{n+4}$. Let $\mathcal{A}_1$ be the set of subtrees of $Y_{n+5}$ of the form $C_1 \oplus T$ for some subtree $T$ of $X_{n+4}$ and let $\mathcal{A}_2$ be the set of subtrees of $Y_{n+5}$ not of this form. Then each tree in $\mathcal{A}_1$ and $\mathcal{A}_2$ is determined up to isomorphism by a subtree $T$ of $X_{n+4}$. Hence, by the inclusion-exclusion principle, 
\[S(Y_{n+5})=|\mathcal{A}_1|+|\mathcal{A}_2|-|\mathcal{A}_1\cap \mathcal{A}_2|=2\cdot S(X_{n+4})-|\mathcal{A}_1\cap \mathcal{A}_2|.\]
We claim $|\mathcal{A}_1 \cap \mathcal{A}_2| = S(Y_{n+1})$. We have that $\mathcal{A}_1 \cap \mathcal{A}_2$ consists of the subtrees of $X_{n+4}$ of the form $C_1\oplus T$ for another subtree $T$ of $X_{n+4}$. Since $X_{n+4}= C_3\oplus Y_{n+1}$, we can obtain a bijection between $\mathcal{A}_1 \cap \mathcal{A}_2$ and subtrees $T$ of $Y_{n+1}$ by mapping $C_1\oplus T$ to $T$. Thus, the first recurrence of Equation~(\ref{eqn:recurrence}) holds. 

For the second recurrence, we consider $X_{n+8} = C_3\oplus Y_{n+5}$. We define four subsets $\mathcal{A}_0, \mathcal{A}_1, \mathcal{A}_2, \mathcal{A}_3$ of subtrees of $X_{n+8}$ where trees in $\mathcal{A}_i$ have $i$ leaves from the $C_3$. More precisely, let $\mathcal{A}_0$ be the subtrees of $X_{n+8}$ that are subtrees of $Y_{n+5}$ and for $1 \le i \le 3$, let $\mathcal{A}_i$ be the subtrees of $X_{n+8}$ of that are of the form $C_i \oplus T$ for some subtree $T$ of $Y_{n+5}$. Then for $0 \le i \le 3$, $|\mathcal{A}_i| = S(Y_{n+5})$. 

Since $n \ge 5$, for distinct $i,j \in \{1,2,3\}$ we have $\mathcal{A}_i \cap \mathcal{A}_j = \{C_i \oplus C_j\}$. Hence,  
\[|\mathcal{A}_1\cap \mathcal{A}_2|=|\mathcal{A}_1\cap \mathcal{A}_3|=|\mathcal{A}_2\cap \mathcal{A}_3|=|\mathcal{A}_0\cap \mathcal{A}_1\cap \mathcal{A}_2|=|\mathcal{A}_0\cap \mathcal{A}_1\cap \mathcal{A}_3|=|\mathcal{A}_0\cap \mathcal{A}_2\cap \mathcal{A}_3|=1\]
and 
\[|\mathcal{A}_1\cap \mathcal{A}_2\cap \mathcal{A}_3|=|\mathcal{A}_0\cap \mathcal{A}_1\cap \mathcal{A}_2\cap \mathcal{A}_3|=0.\]
Hence, by the inclusion-exclusion principle, 
\[S(X_{n+8})=4\cdot S(Y_{n+5})-|\mathcal{A}_0\cap \mathcal{A}_1|-|\mathcal{A}_0\cap \mathcal{A}_2|-|\mathcal{A}_0\cap \mathcal{A}_3|.\]

We have that $\mathcal{A}_0\cap \mathcal{A}_1$ consists of subtrees of $Y_{n+5}$ that are of the form $C_1 \oplus T$ for another subtree $T$ of $Y_{n+5}$. Since $Y_{n+5}= C_1\oplus X_{n+4}$, we obtain a bjection between $\mathcal{A}_0 \cap \mathcal{A}_1$ and subtrees $T$ of $X_{n+4}$ by mapping $C_1 \oplus T$ to $T$. Next, we have that $\mathcal{A}_0 \cap \mathcal{A}_2$ consists of subtrees of $Y_{n+5}$ of the form $C_2 \oplus T$ for another subtree $T$ of $Y_{n+5}$. Since $Y_{n+5} = C_1 \oplus (C_3 \oplus Y_{n+1})$, we can obtain a bijection between $\mathcal{A}_0 \cap \mathcal{A}_2$ and subtrees of $Y_{n+1}$. We obtain a similar bijection between $\mathcal{A}_0 \cap \mathcal{A}_3$ and subtrees of $Y_{n+1}$. Hence the second recurrence of Equation~(\ref{eqn:recurrence}) holds. 
\end{proof}

\begin{theorem} \label{thm:Snlowerbound}
The maximum number of non-isomorphic subtrees $S(n)$ is $\Omega(5^{n/4})$.
\end{theorem}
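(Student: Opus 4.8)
The plan is to collapse the coupled pair of recurrences in Lemma~\ref{lem:XandYsubtrees} into a single second-order linear recurrence for the sequence $a_\ell := S(X_{4\ell+1})$, read off the growth rate $5^\ell$, and then transfer the bound to all $n$ using that $S$ is nondecreasing.

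First I would eliminate the $Y$-values. Set $a_\ell = S(X_{4\ell+1})$ and $b_\ell = S(Y_{4\ell+2})$. Lemma~\ref{lem:XandYsubtrees}, applied with $n = 4\ell+1 \ge 5$, becomes, for $\ell \ge 1$,
\[
b_{\ell+1} = 2a_{\ell+1} - b_\ell, \qquad a_{\ell+2} = 4b_{\ell+1} - a_{\ell+1} - 2b_\ell.
\]
Substituting the first equation into the second gives $a_{\ell+2} = 7a_{\ell+1} - 6b_\ell$, hence $b_\ell = (7a_{\ell+1} - a_{\ell+2})/6$ for $\ell \ge 1$. Feeding this expression for $b_\ell$, together with the shifted one for $b_{\ell+1}$, back into $b_{\ell+1} = 2a_{\ell+1} - b_\ell$ and simplifying yields
\[
a_{\ell+2} = 6a_{\ell+1} - 5a_\ell \qquad (\ell \ge 2).
\]
I would then pass to the consecutive differences $d_\ell := a_{\ell+1} - a_\ell$: subtracting consecutive instances of the recurrence shows $d_{\ell+1} = 5d_\ell$ for $\ell \ge 2$, so $d_\ell = 5^{\ell-2}d_2$ and $a_\ell = a_2 + d_2\cdot\frac{5^{\ell-2}-1}{4}$ for $\ell \ge 2$. (Equivalently, the characteristic polynomial is $(t-1)(t-5)$, so $a_\ell = A\cdot 5^\ell + B$ for large $\ell$.)

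To conclude I need only $d_2 = S(X_{13}) - S(X_9) > 0$, which is immediate: $X_9$ is an induced subtree of $X_{13} = C_3 \oplus (C_1 \oplus X_9)$ (restrict to the nine leaves of the bottom copy of $X_9$), so every isomorphism type occurring as a subtree of $X_9$ also occurs in $X_{13}$, while $X_{13}$ itself is a subtree of size $13$ absent from $X_9$; thus $d_2 \ge 1$. Hence $S(X_{4\ell+1}) = a_\ell \ge \frac{5^{\ell-2}-1}{4} = \Omega(5^\ell)$, i.e., $S(X_n) = \Omega(5^{n/4})$ whenever $n \equiv 1 \mo 4$. For general $n$, the map $T \mapsto C_1 \oplus T$ shows $S(n) \ge S(n-1)$ (an $(n-1)$-leaf tree is an induced subtree of $C_1\oplus T$, so the larger tree has at least as many subtree types), so $S$ is nondecreasing; taking $n'$ to be the largest integer at most $n$ with $n' \equiv 1 \mo 4$, so $n - n' \le 3$, gives $S(n) \ge S(n') \ge S(X_{n'}) = \Omega(5^{n'/4}) = \Omega(5^{n/4})$, as desired.

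The only step requiring a little care is the index bookkeeping in the elimination, since Lemma~\ref{lem:XandYsubtrees} is available only for $n \ge 5$; this is why the derived recurrence is asserted from $\ell \ge 2$ onward, and why it is cleanest to reduce everything to the positivity of the single difference $d_2$ rather than to explicit computation of small values of $S(X_n)$ and $S(Y_n)$.
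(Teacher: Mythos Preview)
Your argument is correct and follows essentially the same route as the paper: collapse the coupled recurrences of Lemma~\ref{lem:XandYsubtrees} to a single second-order linear recurrence with characteristic roots $1$ and $5$, then read off the growth rate. The paper eliminates the $X$-values to obtain $S(Y_{n+9})-6S(Y_{n+5})+5S(Y_{n+1})=0$ and then plugs in the explicit base values $S(Y_6)=9$, $S(Y_{10})=41$ to get the exact closed form $S(Y_m)=1+8\cdot 5^{(m-6)/4}$; you instead eliminate the $Y$-values to get the same recurrence for $a_\ell=S(X_{4\ell+1})$ and replace the explicit base-case computation with the soft observation that $d_2=S(X_{13})-S(X_9)\ge 1$, which suffices for the asymptotic lower bound. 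Your explicit monotonicity step $S(n)\ge S(n-1)$ to pass from the residue class $n\equiv 1\ (\mathrm{mod}\ 4)$ to all $n$ is also a small addition that the paper leaves implicit. These are cosmetic differences on the same underlying argument; your version trades an exact formula for a cleaner proof that avoids any numerical verification.
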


\begin{proof}
We show that $S(Y_m) =  1 + 8 \cdot 5^{(m-6)/4}$ for $m \equiv 2 \mo{4}$. 

Let $n \equiv 1 \mo{4}$. Solving the recurrence for $S(Y_{n+5})$ in Equation~\eqref{eqn:recurrence} in terms of $S(X_{n+4})$ yields 
\[ S(X_{n+4}) = \frac{1}{2} ( S(Y_{n+5}) + S(Y_{n+1})). \]
Using this recurrence and substituting for $S(X_{n+4})$ and $S(X_{n+8})$ in the second line of Equation~\eqref{eqn:recurrence} yields
\[ S(Y_{n+9}) - 6 S(Y_{n+5})+5S(Y_{n+1}) = 0. \] 

Rewriting for $m \equiv 2 \mo{4}$, we let $m = 4k+2$, obtaining
\[ S(Y_{4(k+2)+2}) - 6 S(Y_{4(k+1)+2})+5S(Y_{4k+2}) = 0.
\]
Solving the related characteristic equation we find the solution for $S(Y_{4k+2})$ is of the form $a \cdot 1^k + b \cdot 5^k$ for constants $a$ and $b$. 
As base cases, we can computationally verify that $S(Y_6) = 9$ and $S(Y_{10}) = 41$. Hence $a = 1$ and $b = \frac{8}{5}$. We conclude $S(Y_{4k+2}) = 1 + 8 \cdot 5^{k-1}$ so that 
\[ S(Y_m) = 1 + 8 \cdot 5^{(m-6)/4}.\]
It follows that $S(n)  \in \Omega(5^{n/4})$. 
\end{proof}

We note that since $S(Y_n) \in \Theta(5^{n/4})$ for $n \equiv 2 \mo{4}$, using Equation~\eqref{eqn:recurrence} we also have $S(X_n) \in \Theta(5^{n/4})$ for $n \equiv 1 \mo{4}$.

\subsection{Universal trees} \label{subsec:universal}
We now consider rooted binary universal trees. 

\begin{definition}
A rooted binary tree $T$ is called \emph{$k$-universal} if $\vert \mathcal{D}(T) \vert = W_k$, i.e., if its size-$k$ deck contains all size-$k$ trees. Moreover, $u(k)$ denotes the minimum number of leaves in a $k$-universal tree.
\end{definition}

Upper and lower bounds for the minimum size of $k$-universal trees have recently been obtained by Gawrychowski et al.~\cite{Gawrychowski2018}, albeit using slightly different terminology. More precisely,~\cite{Gawrychowski2018} considered the notion of minor-universal trees, where a rooted tree $T$ is called a \emph{minor-universal tree} for a family of rooted trees, $\mathcal{S}$ say, if every tree $S \in \mathcal{S}$ is a topological minor of $T$, meaning that $T$ contains a subdivision of $S$ as a subgraph. Since every leaf-induced subtree of a rooted binary tree $T$ is a topological minor of $T$, a consequence of the results in~\cite{Gawrychowski2018} (in particular, Theorem 4 and Theorem 11 therein) is the following corollary.

\begin{corollary}
    The following hold: 
    \begin{enumerate}
        \item $u(k) \in O(k^{1.894})$, and
        \item $u(k) \in \Omega(k^{1.728})$.
    \end{enumerate}
\end{corollary}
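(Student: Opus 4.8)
The plan is not to reprove anything but to set up the correct dictionary between our decks and minor-universality and to check that the parameter conversions are harmless. The central observation, already made in the paragraph preceding the statement, is that for rooted binary trees a \emph{leaf-induced subtree is the same thing as a binary topological minor}: if $S$ is a rooted binary tree with $k$ leaves and $T$ is any rooted binary tree, then $S \in \mathcal{D}_k(T)$ if and only if $T$ contains a subdivision of $S$ (with the root of $S$ mapped to the top of the subdivision). One direction is immediate --- for $A \in \binom{\mathbb{L}(T)}{k}$ the minimal connected subgraph of $T$ spanning $A$, rooted at the vertex closest to $r_T$, is by construction a subdivision of $T[A]$, so $T[A]$ is a topological minor of $T$. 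For the converse, given a subdivision of $S$ in $T$ one extends each of its terminal branches downward to an actual leaf of $T$ (lengthening branches preserves the property of being a subdivision of $S$), obtaining a $k$-set $A$ of leaves; since $S$ is binary, each branch vertex of the subdivision uses at most two of its downward directions, and from this one checks that the minimal subtree of $T$ spanning $A$ is precisely the subdivision and that its top vertex is $\operatorname{lca}_T(A)$, so that $T[A] = S$ as rooted trees.

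With this dictionary, a $k$-universal tree is exactly a rooted binary tree that is minor-universal for the family of rooted binary trees with $k$ leaves, and $u(k)$ is the least number of leaves of such a tree. The upper bound $u(k) \in O(k^{1.894})$ is then Theorem~4 of \cite{Gawrychowski2018}: it produces a minor-universal tree of the stated polynomial size for the relevant family, and if that tree is not binary one binarizes it, replacing each vertex with $d \ge 3$ children by a caterpillar $C_d$ with the $d$ original child-subtrees reattached at its $d$ leaves. Binarization at most doubles the number of vertices, does not increase the number of leaves, and --- because a binary subdivision can be rerouted through these caterpillars, entering and leaving each one through the appropriate attachment point --- preserves minor-universality; since a rooted binary tree on $k$ leaves has $2k-1 = \Theta(k)$ vertices, the conversion between ``$k$ leaves'' and ``number of vertices'' changes only a multiplicative constant and not the exponent. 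Symmetrically, the lower bound $u(k) \in \Omega(k^{1.728})$ is Theorem~11 of \cite{Gawrychowski2018}, applied to the fact that, by the dictionary, any $k$-universal tree is minor-universal for the binary trees with $k$ leaves (indeed, by grafting, for all binary trees with at most $k$ leaves); Theorem~11 bounds the number of leaves of any such tree below by $\Omega(k^{1.728})$.

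Essentially everything here is bookkeeping: the one point that requires genuine care --- and the main obstacle --- is the nontrivial direction of the dictionary, namely verifying that after rerouting terminal branches to leaves the minimal spanning subtree of $T$ is, after suppression, \emph{isomorphic to $S$ as a rooted tree}, with no spurious branching and the root landing on $\operatorname{lca}_T(A)$ rather than somewhere higher. This is where both rootedness and the binary hypothesis are used, and it is why the statement is phrased as a corollary of \cite{Gawrychowski2018} rather than a verbatim restatement; the binarization step in the upper bound is a second, milder instance of the same care.
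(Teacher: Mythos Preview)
Your approach is essentially the paper's: both derive the bounds directly from Theorems~4 and~11 of \cite{Gawrychowski2018} via the correspondence between leaf-induced subtrees and rooted topological minors. The paper only states the easy direction of this correspondence explicitly (leaf-induced subtree $\Rightarrow$ topological minor, which suffices for the lower bound) and leaves the converse and any binarization implicit; you spell both out, and that added care is appropriate, since the upper bound genuinely requires the converse direction of the dictionary.
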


To complement these asymptotics, we explicitly calculated $u(k)$ for small $k$ and list the minimum size $k$-universal trees for $k \in [11]$ in Appendix~\ref{sec:computation}. Additionally, we depict a $12$-universal tree on 28 leaves in Appendix~\ref{sec:12universal}. The latter illustrates that $u(12) \leq 28$ (see Table~\ref{tab:kalmar}).

To our surprise, the computed values of $u(k)$ coincided for a while with the terms of the sequence A173382 from \cite{oeis}, and even when they differed, they remained close. Sequence A173382 is defined as the  partial sums of sequence A074206, which counts the ordered factorizations of the number $k$. We have no explanation for this phenomenon, which might be a pure coincidence. The table below compares the two sequences. The sequence A173382  was studied by L. Kalm\'ar \cite{kalmarnemet,kalmarmagyar}, who showed that the sequence is asymptotically $c\cdot k^s$, where $s>1$ is the single real root of $\zeta(s)=2$. Numerically, $s\approx 1.72864723$.

\begin{table}[htbp]
\centering
\begin{tabular}{|c|c|c|c|c|c|c|c|c|c|c|c|c|}\hline
	& $ 1 $ & $2$ & $3$ & $4$ & $5$ & $6$ & $7$ & $8$ & $9$ & $10$ & $11$ & $12$\\\hline
A173382 & $ 1 $ & $2$ & $3$ & $5$ & $6$ & $9$ & $10$ & $14$ & $16$ & $19$ & $20$ & $28$  \\\hline
$u(k)$ & $ 1 $ & $2$ & $3$ & $5$ & $6$ & $9$ & $10$ & $14$ & $16$ & $19$ & $21$ & $\leq 28$ \\\hline
\end{tabular}
\caption{$u(k)$ versus the Kalm\'ar sequence A173382.}\label{tab:kalmar}
\end{table}

\section{Open Problems} \label{sec:openproblems}
In this paper, we have considered problems related to decks and multidecks of rooted binary trees. We have focused on questions related to reconstruction and minimum (resp. maximum) cardinalities. We now conclude by pointing out some open problems and directions for future research. 

For reconstruction, we showed (i) $R(n) \geq 2 \lceil \frac{n}{4} \rceil$ for $n \geq 6$ (Theorem~\ref{decklowerbound}), (ii) $R^{(m)}(n)\geq (1-o(1))\frac{\log n}{\log b}$ with $b$ as in Equation~\eqref{eqn:Wnboundsb} (Theorem~\ref{thm:multilow}), and for $n \notin \{4,5\}$, we have $R(n), R^{(m)}(n) \leq n-1$ (Theorem~\ref{th:deck} and Corollary~\ref{th:multi}). An interesting open problem would be to strengthen these bounds if possible.

\begin{problem}
Improve the lower and upper bounds for $R(n)$ and $R^{(m)}(n)$. 
\end{problem}

In terms of deck cardinalities, we have fully characterized the trees with minimum size-$i$ decks for all $i \in [n]$. Additionally, we have obtained asymptotic upper and lower bounds for $S(n)$, the maximum number of non-isomorphic subtrees of size-$n$ trees. In particular, $S(n) \in O(1.8648^n)$ (Theorem~\ref{thm:Snupperbound}) and $S(n) \in \Omega(5^{n/4})$ (Theorem~\ref{thm:Snlowerbound}). An immediate question that arises is whether these bounds can be improved.

\begin{problem}
Improve the asymptotic lower and upper bounds for $S(n)$. 
\end{problem}

Finally, recall that a tree is $k$-universal if $\vert \D_k(T) \vert = W_k$. Trivially, a $k$-universal tree thus has a maximum size-$k$ deck. Here, we have focused on determining $u(k)$, i.e., the minimum leaf number required for a tree to be $k$-universal. A related but different question is to fix a leaf number $n$ and analyze the maximum cardinality of the size-$j$ deck for each $j < n-1$. 
\begin{problem}
Fix a positive integer $n$. 
For each  $j<n-1$ such that $u(j) > n$, determine $\max_{|T|=n}\{|\mathcal{D}_j(T)|\}$. Which trees achieve this maximum?
\end{problem}

\section*{Acknowledgements}

This material is based upon work supported by the National Science Foundation under Grant Number DMS 1641020.

\bibliographystyle{plain}
\bibliography{bibliography}

\appendix
\section{Minimum size \texorpdfstring{$k$}{k}-universal trees for small \texorpdfstring{$k$}{k}} \label{sec:computation}

For $k\in\{1,2,3\}$, the rooted binary tree with $k$ leaves is unique, and $u(k)=k$. In the following pictures, $U_k$ will denote the $k$-universal tree of minimal size (if it is unique) and
$U_k^i$ will denote the $k$-universal trees of minimal size, where $i$ ranges between 1 and the total number of $k$-universal trees of minimal size.

\begin{figure}[H]
\centering
\begin{tikzpicture}[scale=.5,font=\tiny]    
   \node[whiteroot]  at (0,0) {};      
\node at (0,-1) {$U_1$};  
\end{tikzpicture}\quad\quad
\begin{tikzpicture}[scale=.5,font=\tiny]    
\draw (0,0) -- (0.5,0.5) -- (1,0);
   \node[wleaf]  at (0,0) {};      
   \node[wleaf]  at (1,0) {};      
   \node[rootvertex]  at (0.5,0.5) {};
\node at (.5,-1) {$U_2$};  
\end{tikzpicture}\quad\quad
\begin{tikzpicture}[scale=.5,font=\tiny]    
\draw (0,0) -- (0.5,0.5) -- (1,0);
   \draw (0.5,0.5) -- (1,1) -- (2,0);
   \node[wleaf]  at (0,0) {};      
   \node[wleaf]  at (1,0) {};      
   \node[wleaf]  at (2,0) {}; 
   \node[invertex]  at (0.5,0.5) {};
   \node[rootvertex]  at (1,1) {};
\node at (1,-1) {$U_3$};  
\end{tikzpicture}\quad\quad
\begin{tikzpicture}[scale=.5,font=\tiny]    
\draw (0,0) -- (0.5,0.5) -- (1,0);
   \draw (0.5,0.5) -- (1,1) -- (2,0);
   \draw (3,0) -- (3.5,0.5) -- (4,0);
   \draw (1,1) -- (2,2) -- (4,0);
   \node[wleaf]  at (0,0) {};      
   \node[wleaf]  at (1,0) {};      
   \node[wleaf]  at (2,0) {}; 
   \node[wleaf]  at (3,0) {}; 
   \node[wleaf]  at (4,0) {}; 
   \node[invertex]  at (0.5,0.5) {};
   \node[invertex]  at (1,1) {};
   \node[rootvertex]  at (2,2) {};
   \node[invertex]  at (3.5,0.5) {};
\node at (2,-1) {$U_4^1$};  
\end{tikzpicture}\quad\quad
\begin{tikzpicture}[scale=.5,font=\tiny]    
\draw (0,0) -- (0.5,0.5) -- (1,0);
   \draw (2.5,0.5) -- (2,0);
   \draw (1.5,1.5) -- (3,0);
   \draw (0.5,0.5) -- (2,2) -- (4,0);
   \node[wleaf]  at (0,0) {};      
   \node[wleaf]  at (1,0) {};      
   \node[wleaf]  at (2,0) {}; 
   \node[wleaf]  at (3,0) {}; 
   \node[wleaf]  at (4,0) {}; 
   \node[invertex]  at (0.5,0.5) {};
   \node[rootvertex]  at (2,2) {};
   \node[invertex]  at (1.5,1.5) {};    
   \node[fill=black,circle,inner sep=1pt]  at (2.5,0.5) {}; 
\node at (2,-1) {$U_4^2$};  
\end{tikzpicture}\quad\quad
\begin{tikzpicture}[scale=.5,font=\tiny]    
   \draw (1,1) -- (2,2) -- (4,0);
   \draw (1,1)--(0,0)--(2,0)--cycle;
   \node at (1,.35) {$U_4^1$};
   \node[wleaf]  at (4,0) {}; 
   \node[rootvertex]  at (2,2) {}; 
\node at (2,-1) {$U_5$};  
\end{tikzpicture}
\caption{The minimum size $k$-universal trees for $k\in\{1,2,3,4,5\}$.} 
\end{figure}

\begin{figure}[H]
\centering
\begin{tikzpicture}[scale=.5,font=\tiny] 
   \draw (-.1,-.1)--(2.1,-.1)--(1,1)--cycle;
   \node at (1,0.35) {$C_3$};  
   \draw (2.35,-.1)--(4.45,-.1)--(3.35,1)--cycle;
   \node at (3.35,0.35) {$C_3$};  
   \draw (1,1)--(3.25,3.25);
   \draw(2.175,2.175)--(3.35,1);
   \draw(2.8375,2.8375)--(5.275,0.5);
   \draw(4.75,0)--(5.25,0.5)--(5.75,0);
   \draw(3.25,3.25)--(6.5,0);
   \node[wleaf]  at (4.8,0) {};   
   \node[wleaf]  at (5.8,0) {}; 
   \node[wleaf]  at (6.5,0) {}; 
   \node[invertex]  at (2.175,2.175) {};
   \node[invertex]  at (2.8375,2.8375) {};
   \node[rootvertex]  at (3.25,3.25) {};
   \node[invertex]  at (5.275,0.5) {};
\node at (3.25,-1) {$U_6^1$};  
\end{tikzpicture}
\quad
\begin{tikzpicture}[scale=.5,font=\tiny]   
\draw (-.1,-.1)--(2.1,-.1)--(1,1)--cycle;
   \node at (1,0.35) {$C_3$};  
   \draw (2.35,-.1)--(4.35,-.1)--(3.35,1)--cycle;
   \node at (3.35,0.35) {$C_3$};  
   \draw (1,1)--(3.25,3.25);
   \draw(2.175,2.175)--(3.35,1);
   \draw(2.425,2.425)--(4.85,0);
   \draw(5.5,0)--(6,0.5)--(6.5,0);
   \draw(3.25,3.25)--(6,0.5);
   \node[wleaf]  at (4.85,0) {};   
   \node[wleaf]  at (5.5,0) {}; 
   \node[wleaf]  at (6.5,0) {}; 
   \node[invertex]  at (2.175,2.175) {};
   \node[invertex]  at (2.425,2.425) {};
   \node[rootvertex]  at (3.25,3.25) {};
   \node[invertex]  at (6,0.5) {};
   \node at (3.25,-1) {$U_6^2$};  
   \end{tikzpicture}
\quad
\begin{tikzpicture}[scale=.5, font=\tiny]  
\draw (1,1) -- (2.5,2.5) -- (4,1);
 \draw (2.5,2.5) -- (3,3) -- (6,0);
 \draw (1,1)--(-0.1,-.1)--(2.1,-.1)--cycle;
 \node at (1,0.35) {$U_4^1$};  
 \draw(2.9,-.1)--(4,1)--(5.1,-.1)--cycle;
 \node at (4,0.35) {$C_3$};  
 \node[wleaf]  at (6,0) {};
 \node[invertex]  at (2.5,2.5) {};
 \node[rootvertex]  at (3,3) {};
\node at (3,-1) {$U_6^3$};  \end{tikzpicture}
\quad
\begin{tikzpicture}[scale=.5,font=\tiny] 
 \draw (1,1) -- (2.5,2.5) -- (4,1);
 \draw (2.5,2.5) -- (3,3) -- (6,0);
 \draw (1,1)--(-0.1,-.1)--(2.1,-.1)--cycle;
 \node at (1,0.35) {$U_4^2$};  
 \draw(2.9,-.1)--(4,1)--(5.1,-.1)--cycle;
 \node at (4,0.35) {$C_3$};  
 \node[wleaf]  at (6,0) {};
 \node[invertex]  at (2.5,2.5) {};
 \node[rootvertex]  at (3,3) {};
\node at (3,-1) {$U_6^4$};  \end{tikzpicture}
\quad
\begin{tikzpicture}[scale=.5,font=\tiny] 
\draw (1,1) -- (2.25,2.25) -- (3.5,1);
 \draw (2.25,2.25) -- (2.75,2.75) -- (5.5,0);
 \draw (1,1)--(-0.1,-.1)--(2.1,-.1)--cycle;
 \node at (1,0.35) {$C_4$};  
 \draw(2.4,-.1)--(3.5,1)--(4.6,-.1)--cycle;
 \node at (3.5,0.35) {$B_2$};  
 \node[wleaf]  at (5.5,0) {};
 \node[invertex]  at (2.25,2.25) {};
 \node[rootvertex]  at (2.75,2.75) {};
\node at (2.75,-1) {$U_6^5$}; 
\end{tikzpicture}
\quad
\begin{tikzpicture}[scale=.5,font=\tiny] 
\draw (1,1) -- (2.5,2.5) -- (4,1);
 \draw (1,1)--(-0.1,-.1)--(2.1,-.1)--cycle;
 \node at (1,0.35) {$U_5$};  
 \draw(2.9,-.1)--(4,1)--(5.1,-.1)--cycle;
 \node at (4,0.35) {$C_3$};  
 \node[rootvertex]  at (2.5,2.5) {};
\node at (2.5,-1) {$U_6^6$};  
\end{tikzpicture}
\quad
\begin{tikzpicture}[scale=.5,font=\tiny] 
 \draw (1,1) -- (2.25,2.25) -- (3.5,1);
 \draw (2.25,2.25) -- (2.75,2.72) -- (5.5,0);
 \draw (1,1)--(-0.1,-.1)--(2.1,-.1)--cycle;
 \node at (1,0.35) {$U_5$};  
 \draw(2.4,-.1)--(3.5,1)--(4.6,-.1)--cycle;
 \node at (3.5,0.35) {$C_3$};  
 \node[wleaf]  at (5.5,0) {};
 \node[invertex]  at (2.25,2.25) {};
 \node[rootvertex]  at (2.75,2.75) {};
\node at (2.75,-1) {$U_7$};  
\end{tikzpicture}
\caption{The minimum size $k$-universal trees for $k\in\{6,7\}$.} 
\end{figure}
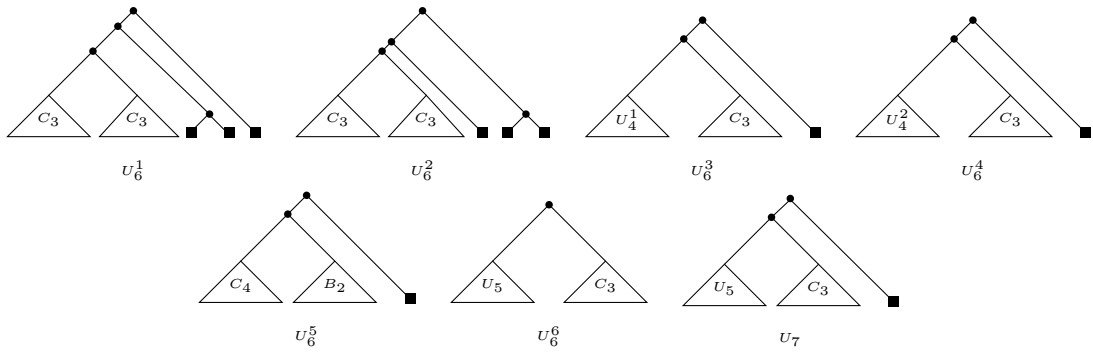

\begin{figure}[H]
\centering
\begin{tikzpicture}[scale=.5,font=\tiny,baseline={(0,0)}] 
   \draw (1.25,1.25) -- (2.5,0);
   \draw (1,1) -- (2,2) ;
   \draw (2,2) -- (2.5,2.5) -- (4,1);%
   \draw (2.5,2.5) -- (3.25,3.25) -- (6.5,0);%
   \draw (3.25,3.25) -- (3.625,3.625) -- (7.25,0);%
   \draw (5.5,0) -- (6,0.5);%
   \draw (1,1)--(-.1,-.1)--(2.1,-.1)--cycle;
   \node  at (1,0.35) {$U_4^2$};  
   \node[wleaf]  at (2.5,0) {}; 
   \node[wleaf]  at (5.5,0) {};% 
   \node[wleaf]  at (6.5,0) {}; %
   \node[wleaf]  at (7.25,0) {}; %
   \draw (4,1)--(2.9,-.1)--(5.1,-.1)--cycle;    
   \node at (4,0.35) {$U_4^1$};   
   \node[invertex]  at (1.25,1.25) {};  
   \node[invertex]  at (2.5,2.5) {}; 
   \node[rootvertex]  at (3.625,3.625) {}; %
   \node[invertex]  at (3.25,3.25) {}; %
   \node[invertex]  at (6,0.5) {}; %
\node at (4.5,-1) {$U_8^{1}$};  
\end{tikzpicture}\quad
\begin{tikzpicture}[scale=.5,font=\tiny,baseline={(0,0)}] 
   \draw (1,1) -- (3.625,3.625);
   \draw (1.25,1.25) -- (2.5,0);
   \draw (2.5,2.5) -- (4,1);
   \draw (2.875,2.875) -- (5.75,0);
   \draw (3.625,3.625) -- (7.25,0);
   \draw (6.25,0) -- (6.75,0.5);
   \draw (1,1)--(-.1,-.1)--(2.1,-.1)--cycle;
   \draw (4,1)--(2.9,-.1)--(5.1,-.1)--cycle;   
   \node at (1,.35) {$U_4^2$};  
   \node[wleaf]  at (2.5,0) {}; 
   \node[wleaf]  at (5.75,0) {}; 
   \node[wleaf]  at (6.25,0) {}; 
   \node[wleaf]  at (7.25,0) {}; 
   \node at (4,0.35) {$U_4^1$};   
   \node[invertex]  at (1.25,1.25) {};  
   \node[invertex]  at (2.5,2.5) {}; 
   \node[invertex]  at (2.875,2.875) {}; 
   \node[rootvertex]  at (3.625,3.625) {}; 
   \node[invertex]  at (6.75,0.5) {}; 
\node at (4.5,-1) {$U_8^{2}$};  
\end{tikzpicture}
\quad
\begin{tikzpicture}[scale=.5,font=\tiny,baseline={(0,0)}] 
   \draw (3,3)--(3.5,3.5)--(7,0);
   \draw (5.5,0.5) -- (6,0);
   \draw (2.25,2.25) -- (3,3) -- (5.5,0.5);
   \draw (1,1) -- (2.25,2.25) -- (3.5,1);
   \draw (5,0) -- (5.5,0.5);
   \draw (1,1)--(-.1,-.1)--(2.1,-.1)--cycle;
   \draw (3.5,1) -- (2.4,-.1)-- (4.6,-.1)--cycle;
   \node at (1,0.35) {$U_5$};   
   \node[wleaf]  at (5,0) {}; 
   \node[wleaf]  at (6,0) {}; 
   \node[wleaf]  at (7,0) {}; 
   \node at (3.5,0.35) {$U_4^1$};  
   \node[invertex]  at (2.25,2.25) {};
   \node[invertex]  at (3,3) {}; 
   \node[rootvertex]  at (3.5,3.5) {}; 
   \node[invertex]  at (5.5,0.5) {}; 
\node at (4,-1) {$U_8^{3}$};  
\end{tikzpicture}
\quad
\begin{tikzpicture}[scale=.5,font=\tiny,baseline={(0,0)}] 
   \draw(2.25,2.25)--(3.5,1);
   \draw(1,1)--(2.7,2.7);
   \draw(2.7,2.7)--(5.25,0);
   \draw (2.75,2.75) -- (3.5,3.5) -- (7,0);
   \draw (6,0) -- (6.5,0.5);
   \draw (1,1)--(-.1,-.1)--(2.1,-.1)--cycle;
   \draw (3.5,1) -- (2.4,-.1)-- (4.6,-.1)--cycle;
   \node at (1,0.35) {$U_5$};   
   \node[wleaf]  at (5.25,0) {}; 
   \node[wleaf]  at (6,0) {}; 
   \node[wleaf]  at (7,0) {}; 
   \node at (3.5,0.35) {$U_4^1$};  
   \node[invertex]  at (2.25,2.25) {};
   \node[invertex]  at (2.7,2.7) {}; 
   \node[rootvertex]  at (3.5,3.5) {}; 
   \node[invertex]  at (6.5,0.5) {}; 
\node at (3.5,-1) {$U_8^{4}$};  
\end{tikzpicture}\quad
\begin{tikzpicture}[scale=.5,font=\tiny,baseline={(0,0)}] 
    \draw (3,3)--(3.5,3.5)--(7,0);
   \draw (5.5,0.5) -- (6,0);
   \draw (2.25,2.25) -- (3,3) -- (5.5,0.5);
   \draw (1,1) -- (2.25,2.25) -- (3.5,1);
   \draw (5,0) -- (5.5,0.5);
   \draw (1,1)--(-.1,-.1)--(2.1,-.1)--cycle;
   \draw (3.5,1) -- (2.4,-.1)-- (4.6,-.1)--cycle;
   \node at (1,0.35) {$U_5$};   
   \node[wleaf]  at (5,0) {}; 
   \node[wleaf]  at (6,0) {}; 
   \node[wleaf]  at (7,0) {}; 
   \node at (3.5,0.35) {$U_4^2$};  
   \node[invertex]  at (2.25,2.25) {};
   \node[invertex]  at (3,3) {}; 
   \node[rootvertex]  at (3.5,3.5) {}; 
   \node[invertex]  at (5.5,0.5) {}; 
\node at (3.5,-1) {$U_8^{5}$};  
\end{tikzpicture}
\quad
\begin{tikzpicture}[scale=.5,font=\tiny,baseline={(0,0)}] 
  \draw(2.25,2.25)--(3.5,1);
   \draw(1,1)--(2.7,2.7);
   \draw(2.7,2.7)--(5.25,0);
   \draw (2.75,2.75) -- (3.5,3.5) -- (7,0);
   \draw (6,0) -- (6.5,0.5);
   \draw (1,1)--(-.1,-.1)--(2.1,-.1)--cycle;
   \draw (3.5,1) -- (2.4,-.1)-- (4.6,-.1)--cycle;
   \node at (1,0.35) {$U_5$};   
   \node[wleaf]  at (5.25,0) {}; 
   \node[wleaf]  at (6,0) {}; 
   \node[wleaf]  at (7,0) {}; 
   \node at (3.5,0.35) {$U_4^2$};  
   \node[invertex]  at (2.25,2.25) {};
   \node[invertex]  at (2.7,2.7) {}; 
   \node[rootvertex]  at (3.5,3.5) {}; 
   \node[invertex]  at (6.5,0.5) {}; 
\node at (3.5,-1) {$U_8^{6}$}; 
\end{tikzpicture}
\quad
\begin{tikzpicture}[scale=.5,font=\tiny,baseline={(0,0)}] 
   \draw(0.9,-0.1)--(3,-0.1)--(2,1)--cycle;
   \node at (2,0.3) {$C_4$}; 
   \draw(4.9,-0.1)--(7.1,-0.1)--(6,1)--cycle;
   \node at (6,0.3) {$U_4^2$}; 
   \draw(2,1)--(4.75,3.75)--(8,0.5);
   \draw(2.75,1.75)--(4,0.5);
   \draw(3.5,0)--(4,0.5)--(4.5,0);
   \draw (3.983825,2.983825)--(6,1);
   \draw(7.5,0)--(8,0.5)--(8.5,0);
   \draw(4.75,3.75)--(5.125,4.125)--(9.25,0);
   \node[wleaf]  at (3.5,0) {}; 
   \node[wleaf]  at (4.5,0) {};
   \node[wleaf]  at (7.5,0) {};
   \node[wleaf]  at (8.5,0) {};
   \node[wleaf]  at (9.25,0) {};
   \node[invertex]  at (2.75,1.75) {};
   \node[invertex]  at (4,0.5) {};
   \node[invertex]  at (3.983825,2.983825) {};
   \node[invertex]  at (4.75,3.75) {};
    \node[rootvertex]  at (5.125,4.125) {};
   \node[invertex]  at (8,0.5) {};
   \node at (5.5,-1) {\text{ $U_8^{7}$ }};
\end{tikzpicture}
\quad
\begin{tikzpicture}[scale=.5,font=\tiny,baseline={(0,0)}] 
   \draw(0.9,-0.1)--(3,-0.1)--(2,1)--cycle;
   \node at (2,0.3) {$C_4$}; 
   \draw(4.9,-0.1)--(7.1,-0.1)--(6,1)--cycle;
   \node at (6,0.3) {$U_4^2$}; 
   \draw(2,1)--(4.375,3.375)--(7.75,0);
   \draw(2.75,1.75)--(4,0.5);
   \draw(3.5,0)--(4,0.5)--(4.5,0);
   \draw (3.983825,2.983825)--(6,1);
   \draw(8.25,0)--(8.75,0.5)--(9.25,0);
   \draw(4.375,3.375)--(5.125,4.125)--(8.75,0.5);
   \node[wleaf]  at (3.5,0) {}; 
   \node[wleaf]  at (4.5,0) {};
   \node[wleaf]  at (7.75,0) {};
   \node[wleaf]  at (8.25,0) {};
   \node[wleaf]  at (9.25,0) {};
   \node[invertex]  at (2.75,1.75) {};
   \node[invertex]  at (4,0.5) {};
   \node[invertex]  at (3.983825,2.983825) {};
   \node[invertex]  at (4.375,3.375) {};
    \node[rootvertex]  at (5.125,4.125) {};
   \node[invertex]  at (8.75,0.5) {};
   \node at (5.5,-1) {\text{ $U_8^{8}$ }};
\end{tikzpicture}
\caption{The minimum size $8$-universal trees.}
\end{figure}

\begin{figure}[H]
\centering
\begin{tikzpicture}[scale=.5,font=\tiny,baseline={(0,0)}]  
   \draw(2,1)--(5.875,4.875)--(10.75,0);
   \draw(2.375,1.375)--(3.75,0);
   \draw(3.75,2.75)--(5.5,1);
   \draw(4.125,3.125)--(7.25,0);
   \draw(5.5,4.5)--(9,1);
   \draw(0.9,-0.1)--(3.1,-0.1)--(2,1)--cycle;
   \node at (2,0.3) {$U_4^2$}; 
   \draw(4.4,-0.1)--(6.6,-0.1)--(5.5,1)--cycle;
   \node at (5.5,0.3) {$U_4^1$}; 
   \draw(7.9,-0.1)--(10.1,-0.1)--(9,1)--cycle;
   \node at (9,0.3) {$C_3$}; 
   \node[wleaf]  at (3.75,0) {}; 
   \node[wleaf]  at (7.25,0) {}; 
   \node[wleaf]  at (10.75,0) {}; 
   \node[invertex]  at (2.375,1.375) {};
   \node[invertex]  at (3.75,2.75) {};
   \node[invertex]  at (4.125,3.125) {};
   \node[invertex]  at (5.5,4.5) {};
   \node[rootvertex]  at (5.875,4.875) {};
\node at (5.875,-1) {\text{ $U_9^{1}$ }};  \end{tikzpicture}
\quad
\begin{tikzpicture}[scale=.5,font=\tiny,baseline={(0,0)}] 
   \draw(2,1)--(5.5,4.5)--(10,0);
   \draw(3.375,2.375)--(4.75,1);
   \draw(3.75,2.75)--(6.5,0);
   \draw(5.125,4.125)--(8.25,1);
   \draw(0.9,-0.1)--(3.1,-0.1)--(2,1)--cycle;
   \node at (2,0.3) {$U_5$};
   \draw(3.65,-0.1)--(5.85,-0.1)--(4.75,1)--cycle;
   \node at (4.75,0.3) {$U_4^1$};
   \draw(7.15,-0.1)--(9.35,-0.1)--(8.25,1)--cycle;
   \node at (8.25,0.3) {$C_3$};
   \node[wleaf]  at (6.5,0) {}; 
   \node[wleaf]  at (10,0) {};  
   \node[invertex]  at (3.375,2.375) {};
   \node[invertex]  at (3.75,2.75) {};    
   \node[invertex]  at (5.125,4.125) {};
   \node[rootvertex]  at (5.5,4.5) {};
\node at (5.5,-1) {\text{ $U_9^{2}$ }};  \end{tikzpicture}
\quad
\begin{tikzpicture}[scale=.5,font=\tiny,baseline={(0,0)}] 
   \draw(2,1)--(5.5,4.5)--(10,0);
   \draw(3.375,2.375)--(4.75,1);
   \draw(3.75,2.75)--(6.5,0);
   \draw(5.125,4.125)--(8.25,1);
   \draw(0.9,-0.1)--(3.1,-0.1)--(2,1)--cycle;
   \node at (2,0.3) {$U_5$};
   \draw(3.65,-0.1)--(5.85,-0.1)--(4.75,1)--cycle;
   \node at (4.75,0.3) {$U_4^2$};
   \draw(7.15,-0.1)--(9.35,-0.1)--(8.25,1)--cycle;
   \node at (8.25,0.3) {$C_3$};
   \node[wleaf]  at (6.5,0) {}; 
   \node[wleaf]  at (10,0) {};  
   \node[invertex]  at (3.375,2.375) {};
   \node[invertex]  at (3.75,2.75) {};    
   \node[invertex]  at (5.125,4.125) {};
   \node[rootvertex]  at (5.5,4.5) {};
\node at (5.5,-1) {\text{ $U_9^{3}$ }};  \end{tikzpicture}
\quad
\begin{tikzpicture}[scale=.5,font=\tiny,baseline={(0,0)}] 
   \draw(2,1)--(6.375,5.375)--(11.75,0);
   \draw(2.875,1.875)--(4.25,0.5);
   \draw(3.75,0)--(4.25,0.5)--(4.75,0);
   \draw(4.25,3.25)--(6.5,1);
   \draw(4.625,3.625)--(8.25,0);
   \draw(6,5)--(10,1);
   \draw(0.9,-0.1)--(3.1,-0.1)--(2,1)--cycle;
   \node at (2,0.3) {$C_4$};
   \draw(5.4,-0.1)--(7.6,-0.1)--(6.5,1)--cycle;
   \node at (6.5,0.3) {$U_4^2$};
   \draw(8.95,-0.1)--(11.15,-0.1)--(10,1)--cycle;
   \node at (10,0.3) {$C_3$};
   \node[wleaf]  at (3.75,0) {}; 
   \node[wleaf]  at (4.75,0) {}; 
   \node[wleaf]  at (8.25,0) {}; 
   \node[wleaf]  at (11.75,0) {}; 
   \node[invertex]  at (2.875,1.875) {};
   \node[invertex]  at (4.25,0.5) {};
   \node[invertex]  at (4.25,3.25) {};
   \node[invertex]  at (4.625,3.625) {};
   \node[invertex]  at (6,5) {};
   \node[rootvertex]  at (6.375,5.375) {};
\node at (6.375,-1) {\text{ $U_9^{4}$ }};  \end{tikzpicture}
\quad
\begin{tikzpicture}[scale=.5,font=\tiny,baseline={(0,0)}] 
   \draw(2,1)--(5.5,4.5)--(10,0);
   \draw(3.375,2.375)--(4.75,1);
   \draw(3.75,2.75)--(6.5,0);
   \draw(5.125,4.125)--(8.25,1);
   \draw(0.9,-0.1)--(3.1,-0.1)--(2,1)--cycle;
   \node at (2,0.3) {$U_5$};
   \draw(3.65,-0.1)--(5.85,-0.1)--(4.75,1)--cycle;
   \node at (4.75,0.3) {$C_4$};
   \draw(7.15,-0.1)--(9.35,-0.1)--(8.25,1)--cycle;
   \node at (8.25,0.3) {$B_2$};
   \node[wleaf]  at (6.5,0) {}; 
   \node[wleaf]  at (10,0) {};  
   \node[invertex]  at (3.375,2.375) {};
   \node[invertex]  at (3.75,2.75) {};    
   \node[invertex]  at (5.125,4.125) {};
   \node[rootvertex]  at (5.5,4.5) {};
\node at (5.5,-1) {\text{ $U_9^{5}$ }};  \end{tikzpicture}
\quad
\begin{tikzpicture}[scale=.5,font=\tiny,baseline={(0,0)}] 
   \draw(2,1)--(5.5,4.5)--(10,0);
   \draw(3.375,2.375)--(4.75,1);
   \draw(3.75,2.75)--(6.5,0);
   \draw(5.125,4.125)--(8.25,1);
   \draw(0.9,-0.1)--(3.1,-0.1)--(2,1)--cycle;
   \node at (2,0.3) {$U_5$};
   \draw(3.65,-0.1)--(5.85,-0.1)--(4.75,1)--cycle;
   \node at (4.75,0.3) {$B_2$};
   \draw(7.15,-0.1)--(9.35,-0.1)--(8.25,1)--cycle;
   \node at (8.25,0.3) {$C_4$};
   \node[wleaf]  at (6.5,0) {}; 
   \node[wleaf]  at (10,0) {};  
   \node[invertex]  at (3.375,2.375) {};
   \node[invertex]  at (3.75,2.75) {};    
   \node[invertex]  at (5.125,4.125) {};
   \node[rootvertex]  at (5.5,4.5) {};
\node at (5.5,-1) {\text{ $U_9^{6}$ }};  \end{tikzpicture}
\quad
\begin{tikzpicture}[scale=.5,font=\tiny] 
\draw (1,1) -- (2.25,2.25) -- (3.5,1);
 \draw (2.25,2.25) -- (2.75,2.75) -- (5.5,0);
 \draw (1,1)--(-0.1,-.1)--(2.1,-.1)--cycle;
 \node at (1,0.35) {$U_7$};  
 \draw(2.4,-.1)--(3.5,1)--(4.6,-.1)--cycle;
 \node at (3.5,0.35) {$U_4^1$};  
 \node[wleaf]  at (5.5,0) {};
 \node[invertex]  at (2.25,2.25) {};
 \node[rootvertex]  at (2.75,2.75) {};
\node at (2.75,-1) {$U_9^7$}; 
\end{tikzpicture}
\quad
\begin{tikzpicture}[scale=.5,font=\tiny] 
\draw (1,1) -- (2.25,2.25) -- (3.5,1);
 \draw (2.25,2.25) -- (2.75,2.75) -- (5.5,0);
 \draw (1,1)--(-0.1,-.1)--(2.1,-.1)--cycle;
 \node at (1,0.35) {$U_7$};  
 \draw(2.4,-.1)--(3.5,1)--(4.6,-.1)--cycle;
 \node at (3.5,0.35) {$U_4^2$};  
 \node[wleaf]  at (5.5,0) {};
 \node[invertex]  at (2.25,2.25) {};
 \node[rootvertex]  at (2.75,2.75) {};
\node at (2.75,-1) {$U_9^8$}; 
\end{tikzpicture}
\caption{The minimum size $9$-universal trees.}
\end{figure}

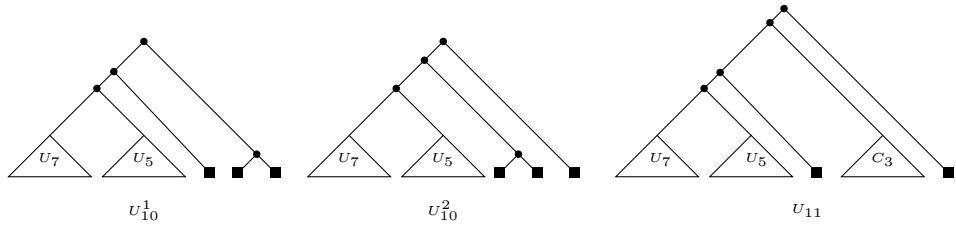
\begin{figure}[H]
   \centering
 \begin{tikzpicture}[scale=.5,font=\tiny,baseline={(0,0)}] 
   \draw(2.25,2.25)--(3.5,1);
   \draw(1,1)--(2.7,2.7);
   \draw(2.7,2.7)--(5.25,0);
   \draw (2.75,2.75) -- (3.5,3.5) -- (7,0);
   \draw (6,0) -- (6.5,0.5);
   \draw (1,1)--(-.1,-.1)--(2.1,-.1)--cycle;
   \draw (3.5,1) -- (2.4,-.1)-- (4.6,-.1)--cycle;
   \node at (1,0.35) {$U_7$};   
   \node[wleaf]  at (5.25,0) {}; 
   \node[wleaf]  at (6,0) {}; 
   \node[wleaf]  at (7,0) {}; 
   \node at (3.5,0.35) {$U_5$};  
   \node[invertex]  at (2.25,2.25) {};
   \node[invertex]  at (2.7,2.7) {}; 
   \node[rootvertex]  at (3.5,3.5) {}; 
   \node[invertex]  at (6.5,0.5) {}; 
\node at (3.5,-1) {$U_{10}^{1}$};  
\end{tikzpicture}\quad
\begin{tikzpicture}[scale=.5,font=\tiny,baseline={(0,0)}] 
    \draw (3,3)--(3.5,3.5)--(7,0);
   \draw (5.5,0.5) -- (6,0);
   \draw (2.25,2.25) -- (3,3) -- (5.5,0.5);
   \draw (1,1) -- (2.25,2.25) -- (3.5,1);
   \draw (5,0) -- (5.5,0.5);
   \draw (1,1)--(-.1,-.1)--(2.1,-.1)--cycle;
   \draw (3.5,1) -- (2.4,-.1)-- (4.6,-.1)--cycle;
   \node at (1,0.35) {$U_7$};   
   \node[wleaf]  at (5,0) {}; 
   \node[wleaf]  at (6,0) {}; 
   \node[wleaf]  at (7,0) {}; 
   \node at (3.5,0.35) {$U_5$};  
   \node[invertex]  at (2.25,2.25) {};
   \node[invertex]  at (3,3) {}; 
   \node[rootvertex]  at (3.5,3.5) {}; 
   \node[invertex]  at (5.5,0.5) {}; 
\node at (3.5,-1) {$U_{10}^{2}$};  
\end{tikzpicture}
\quad
       \begin{tikzpicture}[scale=.5,font=\tiny,baseline={(0,0)}] 
       \draw(1,1)--(2.25,2.25)--(3.5,1);
       \draw(2.25,2.25)--(2.675,2.675)--(5.25,0);
       \draw(2.675,2.675)--(4,4)--(7,1);
       \draw(4,4)--(4.375,4.375)--(8.75,0);
       \draw(-.1,-.1)--(2.1,-.1)--(1,1)--cycle;
       \draw(2.4,-.1)--(4.6,-.1)--(3.5,1)--cycle;
       \draw(5.9,-.1)--(8.1,-.1)--(7,1)--cycle;
       \node at (1.1,0.35) {$U_7$}; 
       \node at (3.65,0.35) {$U_5$}; 
       \node at (7,0.35) {$C_3$}; 
       \node[wleaf]  at (5.25,0) {}; 
       \node[wleaf]  at (8.75,0) {}; 
       \node[invertex]  at (2.25,2.25) {};
       \node[invertex]  at (2.675,2.675) {};
       \node[invertex]  at (4,4) {};
       \node[rootvertex]  at (4.375,4.375) {};
       \node at (5,-1) {\text{ $U_{11}$ }};  
       \end{tikzpicture}
   \caption{The minimum size $k$-universal trees for $k\in\{10,11\}$.}
\end{figure}

\section{Example of a 12-universal tree} \label{sec:12universal}
\begin{figure}[H]
    \centering
     \begin{tikzpicture}[scale=.5,font=\tiny,baseline={(0,0)}] 
       \draw(1,1)--(2.25,2.25)--(3.5,1);
       \draw(2.25,2.25)--(2.675,2.675)--(5.25,0);
       \draw(2.675,2.675)--(4,4)--(7,1);
       \draw(4,4)--(4.375,4.375)--(8.75,0);
       \draw(-.1,-.1)--(2.1,-.1)--(1,1)--cycle;
       \draw(2.4,-.1)--(4.6,-.1)--(3.5,1)--cycle;
       \draw(5.9,-.1)--(8.1,-.1)--(7,1)--cycle;
       \node at (1,0.35) {$U_8^1$}; 
       \node at (3.5,0.35) {$U_6^1$}; 
       \node at (7,0.35) {$C_3$}; 
       \node[wleaf]  at (5.25,0) {}; 
       \node[wleaf]  at (8.75,0) {}; 
       \node[invertex]  at (2.25,2.25) {};
       \node[invertex]  at (2.675,2.675) {};
       \node[invertex]  at (4,4) {};
       \node[rootvertex]  at (4.375,4.375) {};
       \end{tikzpicture}
    \caption{A $12$-universal tree of size 28.}
    \label{fig:12universal}
\end{figure}
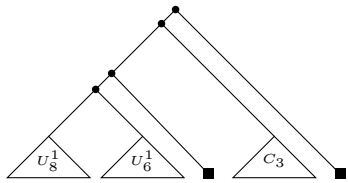

\end{document}